\theoremstyle{definition}
\newtheorem{theorem}{Theorem}[section]
\newtheorem{corollary}[theorem]{Corollary}
\newtheorem{lemma}[theorem]{Lemma}
\title{Mock modular forms from the $k$-rank moments}
\author{Kilian Rausch}
\address{Department of Mathematics and Computer Science, Division of Mathematics, University of
Cologne, Weyertal 86-90, 50931 Cologne, Germany}
\email{krausch1@uni-koeln.de}
\begin{document}

\subjclass[2020]{11F03, 11F11, 11F37, 11F50, 11P82}
\keywords{completions, divisor like sums,  mock modular forms, partitions, partition Eisenstein traces}
\begin{abstract}
In this paper, the generating functions of Garvans so-called $k$-ranks are used, to define a family of mock Eisenstein series.  The $k$-rank moments are then expressed as partition traces of these functions.  We explore the modular properties of this new family, give recursive formulas for them involving divisor like sums, and prove that their Fourier coefficient are integral.  Furthermore, we show that these functions lie in an algebra that is generated only by derivatives up to a finite order but is nevertheless closed under differentiation. In the process, we also answer a question raised by Bringmann, Pandey and van Ittersum by showing that the divisor like sum 
\begin{align*}
      \left(1-2^{\ell-1} \right) \frac{B_\ell}{2\ell}+ \sum_{2n-1 \geq bm \geq b}  (2n-bm)^{\ell-1} q^{mn} - \sum_{m-1 \geq 2bn \geq 2b} (m-2bn)^{\ell-1} q^{mn},
\end{align*}
has a quasi-completion, when $b\geq 3$ is odd. 
\end{abstract}
\maketitle
\section{Introduction and statement of results}
A \textit{partition} $\lambda=(\lambda_1,...,\lambda_s)$ of a natural number $n \in \mathbb{N}_0$ is a non increasing sequence of natural numbers such that $\sum_{j=1}^s \lambda_j=n. $ In order to analyze them,  partitions statistics are used. One of these statistics is the \textit{rank of a partition}, which is defined as the difference between the largest part of $\lambda$, denoted by $\ell(\lambda)$, and the number of parts of $\lambda. $ We denote by $N_2(m,n)$ the number ob partitions of $n$ with rank $m$. We deviate from this definition and set $N_2(0,0)=0.$ In the literature $N_2(m,n)$ is usually denoted by $N(m,n)$.   Another partition statistic of interest to us is the \textit{crank}, denoted by  $c(\lambda). $ In order to define the crank \cite{AG1988,fg3}, we first define  $\omega(\lambda)$ as the number of ones in the partition and $\mu(\lambda)$ as the number of parts strictly larger than $\omega(\lambda).$  Then, we have 
\begin{align*}
    c(\lambda):= \begin{cases}
        \ell(\lambda) & \text{\textit{if }} \omega(\lambda)= 0,
        \\ \mu(\lambda)-\omega(\lambda) & \text{\textit{if }} \omega(\lambda)>0. 
    \end{cases}
\end{align*}
Denote by $N_1(m,n)$ the number of partitions of $n$ with crank $m$. This is usually denoted by $M(m,n)$. For $n=1,$ we deviate from this definition and set $N_1(-1,1)=-N_1(0,1)=N_1(1,1):=1.$ As usual, for $\tau \in \mathbb{H}:=\{w \in \mathbb{C}: \text{Im}(w)>0\}$ and $z \in \mathbb{C}$,  here and throughout we set $q:=\exp(2\pi i \tau)$ and $\zeta:=\exp(2\pi i z)$.

In \cite[Theorem 1.2]{amdeberhan2025tracespartitioneisensteinseries} Amdeberhan, Griffin, Ono and Singh and in \cite[Theorem 1.2]{bringmann2025mockeisensteinseriesassociated} Bringmann, Pandey and van Ittersum showed that for $j \in \mathbb{N}_0$ the  \textit{crank moments} 
\begin{align*}
    C_j(q):= \sum_{n=0}^\infty \sum_{m \in \mathbb{Z}} m^j N_1(m,n)q^n \tag{1.1}
\end{align*}
and \textit{rank moments}
\begin{align*}
    R_{2,j}(q):= 1+\sum_{n=0}^\infty \sum_{m \in \mathbb{Z}} m^j N_2(m,n)q^n,\tag{1.2}
\end{align*}
respectively, can be expressed in terms of partition traces, which we will define below.  For the crank moments, partition traces of Eisenstein series suffice, while the rank moments can be expressed in terms of traces of a family of mock Eisenstein series. 
Let $f=\{f_k\}_{k\in \mathbb{N}}$ be a family of functions, $\lambda=(1^{\ell_1},2^{\ell_2},...,n^{\ell_n}) \vdash n$ and $\phi$ a function on partitions. Then we set
\begin{align*}
    f_\lambda(\tau):= \prod_{k=1}^n f_k^{\ell_k}(\tau)
\end{align*}
and define the \textit{$n$-th partition traces} with respect to the family $f$ and the function $\phi$ by
\begin{align*}
    \text{Tr}_n(\phi,f;\tau):= \sum_{\lambda \vdash n} \phi(\lambda)f_\lambda(\tau). 
\end{align*}
For our purposes, we set
\begin{align*}
    \phi(\lambda):= \prod_{k=1}^n \frac{2^{\ell_k}}{\ell_k!k!^{\ell_k}}.  \tag{1.3}
\end{align*} 
We denote by $G=\{G_k\}_{k\in \mathbb{N}}$ the family of Eisenstein series, that is for $\tau \in \mathbb{H}$  and even $k \geq 2$,
\begin{align*}
    G_k(\tau):= -\frac{B_k}{2k } + \sum_{n=1}^\infty \sigma_{k-1}(n)q^n
\end{align*}
and $G_k(\tau)\equiv 0$ for odd $k.$ Furthermore, let $B_k $ denote the $k$-th Bernoulli number. 
In \cite[Theorem 1.2]{amdeberhan2025tracespartitioneisensteinseries} the authors proved the following theorem:
\begin{theorem}
\textit{It holds that}
\begin{align*}
    \sum_{j=0}^\infty C_j(q) \frac{z^j}{j!} = \frac{2\sinh\left(\frac{z}{2} \right)}{z(q)_\infty} \sum_{j=0}^\infty \text{Tr}_j(\phi,G;\tau) z^j.
\end{align*}  
\end{theorem}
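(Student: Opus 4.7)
The plan is to prove both sides of the identity are equal as power series in $z$ by computing each side directly from its definition. The natural bridge is the two-variable crank generating function, and the combinatorial-to-exponential identity for partition traces.

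\textbf{Handling the left-hand side.} The deviations $N_1(-1,1)=-N_1(0,1)=N_1(1,1)=1$ are exactly those needed to make the standard two-variable crank generating function
\begin{align*}
    C(\zeta,q):=\sum_{n \geq 0}\sum_{m \in \mathbb{Z}} N_1(m,n)\zeta^m q^n=\frac{(q;q)_\infty}{(\zeta q;q)_\infty(\zeta^{-1}q;q)_\infty}
\end{align*}
hold for all $n \geq 0$; I would verify this at $n=0,1$ and quote the standard Jacobi-triple-product style proof otherwise. Substituting $\zeta=e^z$ and expanding $\zeta^m=\sum_{j \geq 0}(mz)^j/j!$ yields, after interchanging sums,
\begin{align*}
    C(e^z,q)=\sum_{j \geq 0} C_j(q)\frac{z^j}{j!},
\end{align*}
so the left-hand side of the theorem is literally $(q)_\infty/\bigl((e^zq;q)_\infty(e^{-z}q;q)_\infty\bigr)$.

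\textbf{Handling the right-hand side.} I would convert the partition-trace generating series into an exponential via the standard cycle-index identity. Since $\phi(\lambda)=\prod_k 2^{\ell_k}/(\ell_k!k!^{\ell_k})$, grouping terms by multiplicities $(\ell_1,\ell_2,\dots)$ gives
\begin{align*}
    \sum_{j \geq 0}\mathrm{Tr}_j(\phi,G;\tau)z^j=\prod_{k \geq 1}\sum_{\ell_k \geq 0}\frac{1}{\ell_k!}\left(\frac{2G_k z^k}{k!}\right)^{\ell_k}=\exp\left(\sum_{k \geq 2,\,k \text{ even}}\frac{2G_k z^k}{k!}\right),
\end{align*}
where only even $k$ survive because $G_k\equiv 0$ for odd $k$. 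Splitting $G_k=-B_k/(2k)+\sum_{n \geq 1}\sigma_{k-1}(n)q^n$ separates the exponent into a $q$-constant Bernoulli part and a $q$-series part. For the $q$-series part I would swap the $k$ and $n$ sums, write $\sigma_{k-1}(n)=\sum_{d|n}d^{k-1}$, and recognize $\sum_{k \geq 2,\,k \text{ even}} 2(dz)^k/k!=2\cosh(dz)-2$. Comparing with the logarithm
\begin{align*}
    \log\frac{(q)_\infty^2}{(e^z q;q)_\infty(e^{-z}q;q)_\infty}=\sum_{N \geq 1}q^N\sum_{d \mid N}\frac{2\cosh(dz)-2}{d},
\end{align*}
obtained from $\log(1-x)=-\sum_{k \geq 1}x^k/k$, shows the $q$-series part exponentiates to $(q)_\infty^2/\bigl((e^zq;q)_\infty(e^{-z}q;q)_\infty\bigr)$.

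\textbf{The key Bernoulli identity.} The main analytic step, and the only place where a nontrivial closed form is needed, is the identity
\begin{align*}
    \sum_{k \geq 2,\,k \text{ even}}\frac{B_k}{k\cdot k!}z^k=\log\!\left(\frac{2\sinh(z/2)}{z}\right),
\end{align*}
which I would derive by termwise integration of $\frac{z}{2}\coth(z/2)-1=\sum_{k \geq 2,\,k \text{ even}} B_k z^k/k!$ (itself read off from $z/(e^z-1)+z/2=\frac{z}{2}\coth(z/2)$), fixing the constant of integration by the $z \to 0$ asymptotics $\sinh(z/2)\sim z/2$. Exponentiating the negative of this gives $z/(2\sinh(z/2))$. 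Combining the two pieces yields
\begin{align*}
    \sum_{j \geq 0}\mathrm{Tr}_j(\phi,G;\tau)z^j=\frac{z}{2\sinh(z/2)}\cdot\frac{(q)_\infty^2}{(e^zq;q)_\infty(e^{-z}q;q)_\infty},
\end{align*}
and multiplying by $2\sinh(z/2)/(z(q)_\infty)$ recovers $C(e^z,q)$, which by the first paragraph equals the left-hand side. The only mild obstacle is bookkeeping the Bernoulli identity above; everything else is formal power-series manipulation.
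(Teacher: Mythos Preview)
Your proof is correct. The paper does not prove this theorem itself; it cites it from Amdeberhan--Griffin--Ono--Singh. However, the paper does supply the two ingredients that make the proof immediate: the exponential form of the crank generating function
\[
C(\zeta,q)=\frac{\sin(\pi z)}{\pi z(q)_\infty}\exp\Bigl(2\sum_{k\ge 2}G_k(\tau)\frac{(2\pi i z)^k}{k!}\Bigr)
\]
(quoted as equation~(2.2)), and the P\'olya cycle index identity (Lemma~2.8). With these in hand the theorem follows in two lines, exactly as the paper proves its own analogue for the $k$-rank moments in Section~3.3: apply the cycle index lemma with $x_j=2G_j/j!$ and $\omega=2\pi i z$ to rewrite the exponential as $\sum_j\mathrm{Tr}_j(\phi,G;\tau)(2\pi i z)^j$, then substitute $z\mapsto z/(2\pi i)$ and use $\sin(z/2i)=-i\sinh(z/2)$.

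Your route is the same at its core---you also use the cycle index identity---but you do not take (2.2) as a black box. Instead you re-derive it from the product formula $C(\zeta,q)=(q)_\infty/((\zeta q)_\infty(\zeta^{-1}q)_\infty)$ by splitting $G_k$ into its constant term $-B_k/(2k)$ and its $q$-series tail, exponentiating each piece separately, and recognizing the results as $z/(2\sinh(z/2))$ and $(q)_\infty^2/((e^zq)_\infty(e^{-z}q)_\infty)$ respectively. This makes your argument more self-contained; the paper's framework is shorter because it outsources exactly that computation to the cited reference. The Bernoulli identity you isolate as ``the key analytic step'' is equivalent to the paper's Lemma~2.1 (equation~(2.3)), so you have independently reproduced that lemma as well.
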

As usual, we set $(a)_n:= \prod_{k=0}^{n-1} (1-aq^k)$ for $n \in \mathbb{N}_0 \cup \{\infty\}$.  The natural question, whether a similar result to this exists for the rank moments was confirmed by the authors of \cite{bringmann2025mockeisensteinseriesassociated}. They proved the following theorem (see Theorems 1.2, 1.4 and 1.5 in \cite{bringmann2025mockeisensteinseriesassociated}):
\begin{theorem}\label{thm:bring}
   \textit{There exists a family $h=\{h_j\}_{j\in \mathbb{N}}$ such that}
\begin{align*}
    \sum_{j=0}^\infty R_{2,j}(q)\frac{z^j}{j!} = \frac{2\sinh\left(\frac{z}{2} \right)}{z(q)_\infty} \sum_{j=0}^\infty \text{Tr}_j(\phi,h;\tau) z^j.
\end{align*}
\textit{The $h_j$ satisfy the following properties:}
\begin{itemize}[leftmargin=0.25in]
    \item[$(1)$]{\textit{We have $h_j \equiv 0$ if $j$ is odd and for $j \geq 2$  it holds that} \begin{align*}
        \lim_{\tau \to i\infty} h_j(\tau)= -\frac{B_j}{2j}.
    \end{align*}}
    \item[$(2)$]{\textit{The function $h_j$ has a quasi-completion\footnote{See Subsection 2.2 for the definition.} $h_j^*$  which satisfies}
    \begin{align*}
        h_j^*\left(  \frac{a\tau+b}{c\tau+d}\right) = \begin{cases}
            (c\tau+d)^j h_j^*(\tau) & \text{ \textit{if} } j \neq 2, \\
            (c\tau+d)^2 h_2^*(\tau) + \frac{3ic}{4\pi } (c\tau+d) & \text{ \textit{if} } j=2 
        \end{cases}
    \end{align*}
    \textit{for all} $\left( {a \atop c} {b \atop d} \right) \in \text{SL}_2(\mathbb{Z}).$}
    \item[$(3)$]{\textit{The algebra $\mathcal{G}:=\mathbb{Q}[h_2,h_4,...,G_2,G_4,...]$ is closed under the action of $D:=\frac{1}{2\pi i} \frac{\partial}{\partial \tau}.$ }}
    \item[$(4)$]{\textit{The $h_j$ are uniquely determined via recursive formulas containing divisor-like sums. }}
    \item[$(5)$]{\textit{The Fourier coefficients of $h_j+ \frac{B_j}{2j}$ are integers. }}
    \end{itemize}
\end{theorem}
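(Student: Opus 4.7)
The plan is to define the family via the classical Hecke-type identity
\[
R(\zeta;\tau) = \frac{1-\zeta}{(q)_\infty} \sum_{n \in \mathbb{Z}} \frac{(-1)^n q^{n(3n+1)/2}}{1-\zeta q^n}
\]
for the rank generating function $R(\zeta;\tau)=\sum_{n\geq 0}\sum_m N_2^{\mathrm{st}}(m,n)\zeta^m q^n$. The key observation is that $1-e^z = -2e^{z/2}\sinh(z/2)$, so after matching the moment generating series $\sum_{j\geq 0} R_{2,j}(q)\frac{z^j}{j!}$ to $R(e^z;\tau)$ (modulo the elementary adjustment from the convention $N_2(0,0)=0$), the factor $1-\zeta$ above produces exactly the prefactor $\frac{2\sinh(z/2)}{z(q)_\infty}$ appearing on the right-hand side of the target identity. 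The remaining factor then \emph{defines} the generating series of partition traces
\[
\sum_{j \geq 0}\text{Tr}_j(\phi,h;\tau)\,z^j \,:=\, -z\, e^{z/2} \sum_{n \in \mathbb{Z}} \frac{(-1)^n q^{n(3n+1)/2}}{1-e^z q^n},
\]
and since the partition $(j)$ contributes $\phi((j))h_j = \frac{2}{j!}h_j$ to $\text{Tr}_j$ while all other partitions of $j$ involve only $h_1,\ldots,h_{j-1}$, induction on $j$ uniquely determines the family $h$.

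Property (1) is then immediate from the symmetry $N_2(m,n) = N_2(-m,n)$: this forces $R(e^z;\tau)$ to be even in $z$, as is $z/(2\sinh(z/2))$, so the trace generating series is even in $z$, and the induction yields $h_j\equiv 0$ for odd $j$. The constant $-B_j/(2j)$ is read off from the $q^0$-piece of the Hecke sum, where only the $n=0$ summand survives and $-ze^{z/2}/(1-e^z) = z/(2\sinh(z/2))$ Laurent-expands via the Bernoulli generating function. For property (4), I would expand $(1-e^zq^n)^{-1} = \sum_{k\geq 0} e^{kz}q^{nk}$ for $n\geq 0$ and its analogue for $n<0$, then collect powers of $z$ using $e^{kz} = \sum_\ell (kz)^\ell/\ell!$; regrouping the exponents of $q$ produces exactly the divisor-like sums from the abstract, giving both the recursion and, after careful tracking of the denominators introduced by $\sinh(z/2)$, the integrality statement (5).

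For the modular transformation (2), I would replace $R$ by the Zwegers-type non-holomorphic completion $\widehat{R}(\zeta;\tau)$ of the rank generating function, which is well known to transform as a Jacobi form of weight $1/2$. Substituting $\widehat{R}$ into the construction above defines the quasi-completions $h_j^*$, whose modular transformations descend coefficient-by-coefficient in $z$. The anomaly at $j=2$ arises because the $z^2$-coefficient of the completion inherits precisely the non-holomorphic correction responsible for the quasi-modularity of $G_2$, producing exactly the $\frac{3ic}{4\pi}(c\tau+d)$ term. For property (3), I note that $D=\frac{1}{2\pi i}\partial_\tau$ multiplies the $n$-th summand of the Hecke sum by $n(3n+1)/2$; reassembling combinatorially through the recursion of (4) and invoking Ramanujan-type identities expresses $Dh_j$ as a polynomial in the $h_k$ and $G_k$, closing $\mathcal{G}$ under $D$.

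The main obstacle is property (2). While the Zwegers completion of $R$ itself is classical, proving that the $z$-expansion respects modularity uniformly for every even $j$---and that the $G_2$-type anomaly appears with exactly the correct coefficient and \emph{only} at $j=2$---requires a careful weight-graded analysis of the Laurent expansion of $\widehat{R}(e^z;\tau)$ near $z=0$. A secondary difficulty is property (5): integrality is not automatic because inversion against $\sinh(z/2)/z$ injects factorial and power-of-$2$ denominators, so a von Staudt--Clausen-type numerical check is needed to ensure that they cancel after the Bernoulli shift $h_j+B_j/(2j)$.
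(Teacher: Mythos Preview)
This theorem is not proved in the present paper; it is quoted from Bringmann--Pandey--van Ittersum. However, the paper does prove the direct analogue for the $k$-ranks ($k\ge 3$) in Theorems~\ref{thm:main}, \ref{thm:rec}, and \ref{thm:integral}, and the methods there shadow the original closely, so the comparison is with those proofs.

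Your definition of the $h_j$ via the exponential/trace expansion of the rank generating function, your argument for (1), and your approach to (4) and (5) via geometric-series expansion into divisor-like sums are all essentially what the paper does (see (3.1), Lemma~\ref{lem:sums}, and the proofs of Theorems~\ref{thm:rec} and~\ref{thm:integral}). For (2) you have the right ingredient---the Zwegers completion of the level-$3$ Appell function---but you have misidentified the source of the $j=2$ anomaly. It does not come from ``the $z^2$-coefficient inheriting the non-holomorphic correction''; rather, the completed Appell function $\widehat{A}_{2k-1}$ has nonzero Jacobi \emph{index} (see (2.10)), and to kill the resulting $e^{-(2k-1)\pi i c z^2/(c\tau+d)}$ factor one multiplies by $e^{4(2k-1)\pi^2 z^2 G_2(\tau)}$. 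The quasimodularity of $G_2$ (equation (2.1)) then produces exactly the $\tfrac{(2k-1)ic}{4\pi}(c\tau+d)$ term, localized at $j=2$ because $G_2$ sits in front of $z^2$; see Lemmas~\ref{lem:trans} and~\ref{lem:comp}. Your outline does not supply this mechanism and so does not explain why the anomaly has coefficient $3$ (i.e.\ $2k-1$ with $k=2$) and occurs only at $j=2$.

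The real gap is your argument for (3). Differentiating the Hecke sum in $\tau$ does \emph{not} simply multiply the $n$-th summand by $n(3n+1)/2$: the denominator $1-e^z q^n$ also depends on $\tau$, so $D$ produces an extra term of the shape $\tfrac{n e^z q^n}{(1-e^z q^n)^2}$, which does not reassemble into polynomials in the $h_k$ and $G_k$ by any obvious ``Ramanujan-type identity.'' The paper's route is entirely different: it invokes the Atkin--Garvan rank--crank PDE (for $k=2$) and Zwegers' higher-level analogue (2.12) for $k\ge 3$, which expresses a specific differential operator applied to $A_{2k-1}$ as a power of the \emph{crank} generating function---and the latter is built entirely from Eisenstein series by (2.2). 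Comparing Laurent coefficients in $z$ then solves for the top derivative of $f_{k,j}$ in terms of lower $f$'s, $G$'s, and (for $k\ge 3$) theta functions; see Lemma~\ref{lem:alge} and the proof of Theorem~\ref{thm:main}~(3). Without this PDE there is no visible reason the algebra should close under $D$, and your sketch does not provide one.
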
 
Note, that properties (1)-(3) are preserved when adding any weight $j$ cusp form to $h_j,$ hence they do not uniquely determine the $h_j$ in question and hence why we require (4) in Theorem \ref{thm:bring}.  

A \textit{quasi-completion}\footnote{Here and throughout we consider the variables $\tau$ and $\overline{\tau}$ as independent.} $g^*(\tau,\overline{\tau})$  of a function $g(\tau)$ is a real-analytic function, that transforms like a quasimodular form and satisfies $\lim_{\overline{\tau} \to -i \infty} g^*(\tau,\overline{\tau})=g(\tau).$ We call it a \textit{completion}, if it transforms like a modular form. 

In view of Theorem \ref{thm:bring}, it is natural to ask, whether a similar result also holds for the moments of a family of statistic called the $k$-ranks, to which the rank belongs. As our first result (see Theorem \ref{thm:main}), we show that this is indeed the case. In \cite{fg3} Garvan introduced these  $k$\textit{-ranks}, denoted by $r_k$, and gave a combinatorial interpretation for them. For a partition $\lambda,$ we denote by $d_1(\lambda),d_2(\lambda),...$ the sizes of the successive Durfee squares (see Subsection 2.1) contained in its Ferrers diagram. For $k \geq 2,$ we define 
\begin{align*}
    r_k(\lambda) &:= \text{number of columns in the Ferrers graph of $\lambda$ to the right} \\ &\hphantom{:=i}\text{of the first Durfee square, whose length is $\leq d_{k-1}(\lambda)$}   
    \\ &\hphantom{=i} - \text{number of parts of $\lambda$ below the $(k-1)$th Durfee square.}
\end{align*}
If the Ferrers diagram of $\lambda$ contains less than $k-1$ successive Durfee squares, then $r_k(\lambda)=0.$ Note, that the 2-rank is the usual rank. For $k\geq 2$, we denote by $N_k(m,n)$ the number of partitions of $n$ with at least $k-1$ successive Durfee squares and $k$-rank $m.$ By convention we set $N_k(m,0)=0$ for all $m \in \mathbb{Z}.$ 

Our central object of study in this paper are the \textit{$k$-rank moments} for $k \geq 3$: For $j \in \mathbb{N}_0$ we set 
\begin{align*}
    R_{k,j}(q):= \sum_{n =0}^\infty \sum_{m \in \mathbb{Z}} m^j N_k(m,n)q^n.  \tag{1.4}
\end{align*}
In order to state Theorem \ref{thm:main}, we define for $\tau \in \mathbb{H}, m \in \mathbb{N}_0, b \in \mathbb{N}$ and $a\in\mathbb{Z}$ the $q$-series,
    \begin{align*}
        \theta_{a,b}(\tau):= \sum_{n \in \mathbb{Z}} (-1)^n q^{\frac{bn^2+an}{2}} \tag{1.5}
    \end{align*}
    and
    \begin{align*}
        \theta_{a,b}^{[m]}(\tau):= \left( \frac{1}{\pi i} \frac{\partial}{ \partial \tau} \right)^m  \theta_{a,b}(\tau)= \sum_{n\in\mathbb{Z}} (-1)^n \left(bn^2+an \right)^m q^{\frac{(bn^2+an)}{2}}. \tag{1.6}
    \end{align*}
\begin{theorem}\label{thm:main}
\textit{For any integer $k \geq 3,$ there exists a family $f_k=\{f_{k,j}\}_{j\in \mathbb{N}}$ such that}
\begin{align*}
    \sum_{j=0}^\infty R_{k,j}(q) \frac{z^j}{j!}= \frac{2 \sinh \left(\frac{z}{2}\right)}{z(q)_\infty} \sum_{j=0}^\infty \text{Tr}_j(\phi,f_k;\tau) z^j. 
\end{align*}
\textit{The $f_{k,j}$ satisfy the following properties: }
\begin{itemize}[leftmargin=0.25in]
    \item[(1)]{\textit{We have $f_{k,j} \equiv0$ if $j$ is odd and for $j\geq 2$ it holds that\begin{align*}
        \lim_{\tau \to i\infty} f_{k,j}(\tau)= -\frac{B_j}{2j}. 
    \end{align*} }}
    \item[(2)]{\textit{The function $f_{k,j}$ has a quasi-completion $f_{k,j}^*$ which satisfies}
    \begin{align*}
        f_{k,j}^*\left(  \frac{a\tau+b}{c\tau+d}\right) = \begin{cases}
            (c\tau+d)^j f_{k,j}^*(\tau) & \textit{if } j \neq 2, \\
            (c\tau+d)^2 f_{k,2}^*(\tau) + \frac{(2k-1)ic}{4\pi } (c\tau+d) & \textit{if } j=2 
        \end{cases}
    \end{align*}
    \textit{for all} $\left( {a \atop c} {b \atop d} \right) \in \text{SL}_2(\mathbb{Z}).$}
    \item[(3)]{\textit{For $k=3$ the algebra $\mathcal{F}:=\mathbb{C}[f_{3,2},f_{3,2}',f_{3,4},f_{3,4}',...,G_2,G_4,...,\theta_{1,5},\theta_{1,5}^{[1]},\theta_{3,5},\theta_{3,5}^{[1]}]$ is closed under the action of $D.$   }  }      
\end{itemize}
\end{theorem}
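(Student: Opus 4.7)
My plan is to parallel the strategy of \cite{bringmann2025mockeisensteinseriesassociated}, carrying along the additional theta data supplied by Garvan's Durfee-square dissection of the $k$-rank. Set
\begin{align*}
    K_k(\zeta;q):=\sum_{n\geq 0}\sum_{m\in\mathbb{Z}} N_k(m,n)\zeta^m q^n,
\end{align*}
so that $\sum_{j\geq 0}R_{k,j}(q)z^j/j! = K_k(e^{2\pi i z};q)$ formally in $z$. Garvan's formula from \cite{fg3} writes $(q)_\infty K_k(\zeta;q)$ as an explicit nested Hecke-type sum. Expanding this representation and the right-hand side of the asserted identity in powers of $z$ and matching coefficients produces a family $f_{k,j}$ together with a recursion determining them, just as the $h_j$ were obtained in Theorem \ref{thm:bring}; this simultaneously yields the generating-function identity and defines the $f_{k,j}$.

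For property (1), the $\zeta\leftrightarrow\zeta^{-1}$ symmetry of Garvan's formula (equivalently, $N_k(m,n)=N_k(-m,n)$) forces all odd $z$-coefficients to vanish on both sides, so the recursion gives $f_{k,j}\equiv 0$ for odd $j$. The constant term $\lim_{\tau\to i\infty}f_{k,j}(\tau)=-B_j/(2j)$ is read off by sending $q\to 0$ and matching with $\sum_{j\geq 0}G_j(i\infty)z^j/j!$. For property (2), I would construct a real-analytic Jacobi-like completion $\widehat{K}_k(\zeta;\tau,\overline{\tau})$ of $K_k$ via Zwegers-style non-holomorphic corrections of the Hecke-type blocks in Garvan's dissection, and define $f_{k,j}^*$ as the $z^j$-coefficient of the completed and rescaled generating function. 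The Jacobi transformation law of $\widehat{K}_k$ under $\text{SL}_2(\mathbb{Z})$ yields the weight-$j$ modular transformation for $j\neq 2$, while the $z^2$-part of the Jacobi cocycle produces the anomaly $\tfrac{(2k-1)ic}{4\pi}(c\tau+d)$ at $j=2$. The constant $2k-1$, replacing the $3=2\cdot 2-1$ of Theorem \ref{thm:bring}, tracks the net contribution of the $k-1$ theta factors that arise from the Durfee-square dissection and reduces correctly at $k=2$.

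The main obstacle is property (3). The theta functions $\theta_{1,5},\theta_{3,5}$ and their first $\tau$-derivatives enter because Garvan's formula for $k=3$ contains a theta quotient of conductor $5$ produced by the two-Durfee-square dissection. Using the recursion from the existence proof, each $f_{3,j}$ is expressed as a polynomial in $G_2,G_4,\ldots$, $\theta_{1,5},\theta_{3,5}$, $\theta_{1,5}^{[1]},\theta_{3,5}^{[1]}$ and the lower $f_{3,j'},f_{3,j'}'$. Closure of $\mathcal{F}$ under $D$ then reduces to a finite verification on generators: $DG_{2\ell}$ lies in $\mathbb{Q}[G_2,G_4,\ldots]$ by Ramanujan's identities, $D\theta_{a,5}=\tfrac12 \theta_{a,5}^{[1]}$ is immediate, and $D\theta_{a,5}^{[1]}=\tfrac12\theta_{a,5}^{[2]}$ is re-expressed in terms of $G_2,\theta_{a,5},\theta_{a,5}^{[1]}$ via the second-order heat-equation relation for $\theta_{a,5}$. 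The delicate step is showing $Df_{3,j}'\in\mathcal{F}$: I would establish this by exhibiting a second-order linear differential equation satisfied by $f_{3,j}^*$ with coefficients in the subring generated by the Eisenstein series and the conductor-$5$ theta blocks. Existence of such an equation is forced by the finite dimensionality, in each weight, of the ambient space of conductor-$5$ mixed mock-quasimodular forms containing $f_{3,j}^*$, and this finiteness is precisely what prevents the differentiation tower from spawning infinitely many new generators. Making this boundedness statement quantitative and sharp enough to produce the specific generators listed in $\mathcal{F}$ is the core technical point I expect to require the most care.
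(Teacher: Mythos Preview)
Your outline tracks the paper's strategy for parts (1) and (2) in spirit, but it misses a key structural point in the definition of the $f_{k,j}$ and takes the wrong route for part (3).

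For the definition, you propose to expand $K_k(e^{2\pi iz};q)=FG_k(\zeta,q)$ directly and read off the $f_{k,j}$. The paper does \emph{not} do this: it defines $f_{k,j}$ by
\[
FG_k(\zeta,q)+\frac{\theta_{1,2k-1}(\tau)}{(q)_\infty}=\frac{\sin(\pi z)}{\pi z(q)_\infty}\exp\!\Bigl(2\sum_{j\ge1}f_{k,j}(\tau)\frac{(2\pi iz)^j}{j!}\Bigr).
\]
The extra term $\theta_{1,2k-1}/(q)_\infty$ is precisely the holomorphic part (in $\tau$) of the non-holomorphic correction $H_k$ in Zwegers' completion of the level-$(2k-1)$ Appell function $A_{2k-1}$ (Lemma~\ref{lem:lim}). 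Without this correction, the left-hand side has constant term $0$ rather than $1$ as $q\to 0$, so the exponential form cannot be taken; and more seriously, the candidate $f_{k,j}^*$ would not satisfy $\lim_{\overline{\tau}\to -i\infty}f_{k,j}^*=f_{k,j}$, so it would fail to be a quasi-completion. The factor $2k-1$ in the $j=2$ anomaly comes from the index of $\widehat{A}_{2k-1}$ in Zwegers' Jacobi transformation (2.10), not from counting theta blocks in a dissection.

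For part (3), your plan to deduce a second-order ODE for $f_{3,j}^*$ from ``finite dimensionality of the ambient space of conductor-$5$ mixed mock-quasimodular forms'' is not a proof: no such finiteness theorem is available off the shelf, and even if it were, it would not single out the specific generator list in $\mathcal{F}$. The paper instead uses a concrete tool you did not mention: Zwegers' rank-crank type PDE (2.12), which for $\ell=5$ reads
\[
\Bigl(\mathcal{H}_3\mathcal{H}_1-\tfrac{220}{3}G_4\Bigr)A_5(z,\tau)=24\Bigl(\tfrac{(q)_\infty}{-2i\sin(\pi z)}C(\zeta,q)\Bigr)^5.
\]
Rewriting $A_5$ in terms of the exponential generating function of the $f_{3,j}$ (plus the $\theta_{1,5},\theta_{3,5}$ corrections), applying $\mathcal{H}_3\mathcal{H}_1$, and comparing coefficients of $z^{2n-1}$ shows inductively that each $f_{3,2n}''$ lies in $\mathcal{F}$. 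Your observation that $\theta_{a,5}^{[2]}$ can be eliminated via a second-order relation is correct and matches the paper's use of Zwegers' differential equation (4.1) for $\tilde{\theta}_{\ell,r}$, but that step handles only the theta generators; the $f_{3,j}''$ require the rank-crank PDE.
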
 
Similar to the $h_j$ in \cite{bringmann2025mockeisensteinseriesassociated} these two (or in the case $k=3$, three) properties do not uniquely determine the $f_{k,j},$ because the function resulting from adding a cusp form of weight $j$ to $f_{k,j}$ still satisfies all these properties. Uniqueness of the $h_j$ in \cite{bringmann2025mockeisensteinseriesassociated} was achieved by using recursive formulas involving divisor like sums, a result that also holds for the $f_{k,j}$ considered in this paper: For integers $a,b$ and $\ell \in  \mathbb{N}_0,$  we define $g_{a,b,0}:=1,$ $g_{a,b,\ell}:=0$ for odd $\ell$ and
\begin{align*}
     g_{a,b,\ell}(\tau):= \left(1-2^{\ell-1} \right) \frac{B_\ell}{2\ell}+ \sum_{an-1 \geq bm \geq b}  (an-bm)^{\ell-1} q^{mn} - \sum_{m-1 \geq abn \geq ab} (m-abn)^{\ell-1} q^{mn},
\end{align*}
for even $\ell.$  Bringmann, Pandey and van Ittersum used the family $g_{2,3,\ell}$  to uniquely determine the $h_j$ in Theorem \ref{thm:bring} (4). They also raised the question, whether there are other combinations of $a$ and $b$ such that the $g_{a,b,\ell}$ has a quasi-completion. We will show that this is indeed the case, if $a=2$ and $b=2k-1$ is any odd number $\geq 3.$ 
\begin{theorem}\label{thm:rec}
\textit{Let $f_k=\{f_{k,j}\}_{j \in \mathbb{N}}$ be the family of functions from Theorem \ref{thm:main}  and $n \in \mathbb{N}.$ Then we have
\begin{align*}
f_{k,n}(\tau) &=  \frac{ng_{2,2k-1,n}(\tau)}{2^{n-1}} - \sum_{\ell=2}^{n-1 } \binom{n-1}{\ell-1}f_{k,\ell}(\tau)  \frac{(n-\ell)g_{2,2k-1,n-\ell}(\tau)}{2^{n-\ell-2}}
 \end{align*}
and}
\begin{align*}
 f_{k,n}(\tau) = \sum_{\ell=2}^{n}  \frac{\ell g_{2,2k-1,\ell}(\tau)}{2^{\ell-1}} \frac{(n-1)!}{(\ell-1)!}\text{Tr}_{n-\ell}(\psi,f_k;\tau), 
\end{align*}
\textit{where $\psi(\lambda):=(-1)^{\sum_{k=1}^n \ell_k} \phi(\lambda)$. }
\end{theorem}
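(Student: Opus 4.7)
The plan is to reduce both formulas to the single master identity
\begin{align*}
\text{Tr}_n(\phi, f_k; \tau) = \frac{g_{2,2k-1,n}(\tau)}{2^{n-2}(n-1)!}, \qquad n \geq 1. \qquad (\star)
\end{align*}
Setting $A(z,\tau) := \sum_{n \geq 0}\text{Tr}_n(\phi, f_k;\tau) z^n$, definition (1.3) gives $A(z,\tau) = \exp\bigl(2\sum_{m \geq 2} f_{k,m}(\tau) z^m/m!\bigr)$, and the definition of $\psi$ yields $\sum_m \text{Tr}_m(\psi, f_k;\tau) z^m = A(z,\tau)^{-1}$. Assuming $(\star)$, we get $A'(z) = \sum_\ell \ell g_{2,2k-1,\ell}(\tau) z^{\ell-1}/(2^{\ell-2}(\ell-1)!)$; combining with the logarithmic derivative identity $A'(z)/A(z) = 2\sum_m f_{k,m}(\tau) z^{m-1}/(m-1)!$, rearranged as $\sum_m f_{k,m} z^{m-1}/(m-1)! = \tfrac{1}{2} A'(z) A(z)^{-1}$, and extracting the coefficient of $z^{n-1}$ produces the second formula of Theorem \ref{thm:rec}. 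The first formula is obtained by isolating $f_{k,n}$ in the Newton-style recursion $n\text{Tr}_n = 2 f_{k,n}/(n-1)! + 2\sum_{\ell=2}^{n-1} f_{k,\ell}\text{Tr}_{n-\ell}/(\ell-1)!$ (itself a consequence of $A'/A$) and applying $(\star)$ to replace the partition traces by their $g$-values.

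The proof of $(\star)$ uses Garvan's Hecke-type identity \cite{fg3} for the $k$-rank generating function,
\begin{align*}
\sum_{n \geq 0}\sum_{m \in \mathbb{Z}} N_k(m,n)\zeta^m q^n = \frac{1-\zeta}{(q)_\infty}\sum_{n \in \mathbb{Z}}\frac{(-1)^n q^{\frac{(2k-1)n^2+n}{2}}}{1 - \zeta q^n},
\end{align*}
combined with Theorem \ref{thm:main} rearranged as $A(z,\tau) = \frac{z(q)_\infty}{2\sinh(z/2)}\sum_{n,m} N_k(m,n) e^{mz} q^n$. Using the simplification $(1-e^z)/(2\sinh(z/2)) = -e^{z/2}$, this becomes
\begin{align*}
A(z,\tau) = -z e^{z/2}\sum_{n \in \mathbb{Z}}\frac{(-1)^n q^{\frac{(2k-1)n^2+n}{2}}}{1 - e^z q^n}.
\end{align*}
The $n = 0$ contribution is $\frac{z}{2\sinh(z/2)}$, whose $z^\ell$ coefficient $(1-2^{\ell-1})B_\ell/(2^{\ell-1}\ell!)$ (via the Bernoulli polynomial identity $B_\ell(1/2) = (2^{1-\ell}-1)B_\ell$) matches the Bernoulli-type term of $g_{2,2k-1,\ell}/(2^{\ell-2}(\ell-1)!)$. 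For the $n \neq 0$ contributions, expanding $(1-e^z q^n)^{-1}$ as a geometric series separately for $n > 0$ and $n < 0$, multiplying by $-ze^{z/2}$, and pairing the $\pm n$ contributions produces
\begin{align*}
\sum_{n \geq 1}\sum_{\substack{M \geq 1 \\ M\text{ odd}}}(-1)^{n+1}\cdot 2z\sinh(Mz/2)\cdot q^{\frac{(2k-1)n^2+Mn}{2}}.
\end{align*}

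The main obstacle will be matching the $z^\ell$ coefficient of this Hecke-type sum with the $q$-series part of $g_{2,2k-1,\ell}$; the plan is to split the double sum by the parity of $n$. When $n$ is odd, the exponent factors as $n\tilde m$ with $\tilde m = ((2k-1)n + M)/2 \in \mathbb{Z}$, and after exchanging the summation variables this recovers the $m$-odd portion of the first sum in $g_{2,2k-1,\ell}$; when $n = 2n'$ is even, the exponent factors as $n'((4k-2)n' + M)$ with $(4k-2)n' + M$ odd, recovering (with sign $-1$) the $m$-odd portion of the second sum. The remaining $m$-even contributions of both sums cancel pairwise: under the substitutions $m = 2m'$ in the first sum and $n = 2n'$ in the second, both reduce to the same expression $\pm 2^{\ell-1}\sum_{a,b}(b-(2k-1)a)^{\ell-1} q^{2ab}$ over the same range, with opposite signs. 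This parity-based reindexing and range matching is elementary but requires careful bookkeeping, and constitutes the main technical content of the proof.
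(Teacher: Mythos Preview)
Your overall strategy coincides with the paper's: both proofs reduce to your master identity $(\star)$, which is exactly equation~(5.7) in the paper, and then obtain the two recursions by manipulating the logarithmic derivative of $A(z)=\exp\bigl(2\sum_j f_{k,j}z^j/j!\bigr)$. The paper reaches (5.7) in two stages---first expressing each moment $R_{k,j}$ as a combination of the $g$'s via a parity splitting on the Hecke sum~(2.6) (Lemma~\ref{lem:sums}), then resumming over $j$ with a Bernoulli-polynomial identity---whereas you work with the bilateral generating function directly. Your route is somewhat cleaner but rests on the same parity mechanism.

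There is, however, a real gap concerning the $\theta_{1,2k-1}$ correction. By the defining relation~(3.1) one has
\[
A(z)\;=\;\frac{z\,(q)_\infty}{2\sinh(z/2)}\Bigl(FG_k(e^z,q)+\frac{\theta_{1,2k-1}(\tau)}{(q)_\infty}\Bigr),
\]
not the version without the $\theta$-term that you write down. (Evaluate at $z=0$: your formula would force $A(0)=(q)_\infty R_{k,0}(q)$, but $A(0)=1$ while $(q)_\infty R_{k,0}=1-\theta_{1,2k-1}$ by~(5.2).) Your bilateral Hecke identity carries the matching error: setting $\zeta=1$, only the $n=0$ summand survives and your right-hand side equals $1/(q)_\infty$, whereas $FG_k(1,q)=R_{k,0}(q)=(1-\theta_{1,2k-1})/(q)_\infty$. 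The correct bilateral identity is
\[
FG_k(\zeta,q)+\frac{\theta_{1,2k-1}(\tau)}{(q)_\infty}\;=\;\frac{1-\zeta}{(q)_\infty}\sum_{n\in\mathbb{Z}}\frac{(-1)^n\,q^{\frac{(2k-1)n^2+n}{2}}}{1-\zeta q^n}.
\]
Because both corrections involve the same term $\theta_{1,2k-1}/(q)_\infty$, they cancel and your displayed expression $A(z)=-ze^{z/2}\sum_{n}\frac{(-1)^nq^{((2k-1)n^2+n)/2}}{1-e^zq^n}$ is in fact correct; so is your subsequent parity analysis. But the two intermediate statements as written are false and must be amended.
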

From Theorems \ref{thm:bring}, \ref{thm:main} and \ref{thm:rec} we get the following corollary.
\begin{corollary}
  \textit{For odd $b\geq 3$ the $g_{2,b,l}(\tau) $ has a quasi-completion. }
\end{corollary}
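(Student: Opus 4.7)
The plan is to invert the recursion of Theorem \ref{thm:rec}, express $g_{2,2k-1,n}$ in terms of the $f_{k,\ell}$ and the lower-index $g$'s, and then induct to build the quasi-completions.

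First I would split off the case $b=3$. Here the claim is essentially contained in Theorem \ref{thm:bring} (4): the analogous recursion for the $h_j$ established in \cite{bringmann2025mockeisensteinseriesassociated} already solves for $g_{2,3,\ell}$ in terms of the quasi-completable $h_j$'s, and the inductive argument below then applies verbatim. So from now on assume $b \geq 5$ is odd, write $b = 2k-1$ with $k \geq 3$, and use Theorems \ref{thm:main} and \ref{thm:rec}.

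For odd $\ell$ or $\ell = 0$, the function $g_{2,2k-1,\ell}$ is a constant and hence trivially its own quasi-completion, so it suffices to induct on even $n \geq 2$. Isolating $g_{2,2k-1,n}$ from the first identity in Theorem \ref{thm:rec} gives
\begin{align*}
    g_{2,2k-1,n}(\tau) = \frac{2^{n-1}}{n}\, f_{k,n}(\tau) + \frac{2^{n-1}}{n}\sum_{\ell=2}^{n-1} \binom{n-1}{\ell-1} f_{k,\ell}(\tau)\, \frac{(n-\ell)\, g_{2,2k-1,n-\ell}(\tau)}{2^{n-\ell-2}}.
\end{align*}
For $n=2$ the sum is empty and one simply has $g_{2,2k-1,2} = f_{k,2}$, which admits the quasi-completion $f_{k,2}^*$ of Theorem \ref{thm:main} (2); this anchors the induction. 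For general even $n$, define $g_{2,2k-1,n}^*$ by the same formula, with every $f_{k,\ell}$ replaced by $f_{k,\ell}^*$ and every $g_{2,2k-1,n-\ell}$ with $n-\ell \geq 2$ replaced by its inductively constructed quasi-completion. Taking $\overline{\tau} \to -i\infty$ reproduces the displayed identity, so $g_{2,2k-1,n}^*$ has the correct holomorphic specialization.

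It remains to verify that $g_{2,2k-1,n}^*$ transforms like a quasimodular form of weight $n$. By Theorem \ref{thm:main} (2) each $f_{k,\ell}^*$ transforms like a weight-$\ell$ quasimodular form, picking up the anomalous term $\tfrac{(2k-1)ic}{4\pi}(c\tau+d)$ only when $\ell = 2$; by induction each $g_{2,2k-1,n-\ell}^*$ transforms like a weight-$(n-\ell)$ quasimodular form. Since the space of functions transforming like quasimodular forms is closed under multiplication (with weights adding), each summand on the right transforms like a weight-$n$ quasimodular form, and therefore so does $g_{2,2k-1,n}^*$. The main bookkeeping obstacle is tracking how the weight-$2$ anomalies combine through iterated products and the binomial sum, but this is precisely the closure of the algebra of quasimodular forms under multiplication and hence causes no essential difficulty.
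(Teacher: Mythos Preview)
Your argument is correct and is exactly the approach the paper has in mind: the corollary is stated without proof, merely as a consequence of Theorems \ref{thm:bring}, \ref{thm:main} and \ref{thm:rec}, and your inversion of the first recursion in Theorem \ref{thm:rec} together with induction and the closure of ``transforms like a quasimodular form'' under products is the intended filling-in of the details. The case split $b=3$ versus $b=2k-1\geq 5$ is also the reason the paper cites all three theorems.
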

Our last result concerns the integrality of the Fourier coefficients of the $f_{k,j}$.
\begin{theorem}\label{thm:integral}
\textit{The Fourier coefficients of $f_{k,j}(\tau) + \frac{B_j}{2j}$ are integers.}
\end{theorem}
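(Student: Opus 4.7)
The plan is to prove integrality by strong induction on $j$, using the first recursion of Theorem~\ref{thm:rec}. The base case $j=2$ is immediate: the recursion gives $f_{k,2}=g_{2,2k-1,2}$, whose constant term is $(1-2)\tfrac{B_2}{4}=-\tfrac{B_2}{4}$ and whose $q^N$-coefficients for $N\geq 1$ are manifestly integer divisor-like sums.

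For the inductive step, fix an even $n\geq 4$ and assume $\widetilde f_{k,\ell}:=f_{k,\ell}+B_\ell/(2\ell)\in q\mathbb{Z}[[q]]$ for every even $2\leq\ell<n$. Write $g_{2,2k-1,\ell}=(1-2^{\ell-1})B_\ell/(2\ell)+\widetilde g_\ell$ with $\widetilde g_\ell\in q\mathbb{Z}[[q]]$. Inserting these decompositions into the recursion
\begin{align*}
f_{k,n}(\tau)=\frac{n\,g_{2,2k-1,n}(\tau)}{2^{n-1}}-\sum_{\ell=2}^{n-1}\binom{n-1}{\ell-1}f_{k,\ell}(\tau)\,\frac{(n-\ell)\,g_{2,2k-1,n-\ell}(\tau)}{2^{n-\ell-2}},
\end{align*}
and comparing constant terms reduces to the Bernoulli-number convolution encoded in $\lim_{\tau\to i\infty}f_{k,n}=-B_n/(2n)$ (Theorem~\ref{thm:main}(1)). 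What remains is to check that, on the right-hand side, the contributions to $[q^N]\widetilde f_{k,n}$ from the fractional factors $1/2^{n-1}$ and $1/2^{n-\ell-2}$, combined with the constants $B_\ell/(2\ell)$, yield an integer.

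The key arithmetic ingredient is a $2$-adic structure of $\widetilde g_\ell$ coming from the oddness of $b:=2k-1$. Splitting the sums defining $\widetilde g_\ell$ by the parity of $m$, the contribution of even $m$ is divisible by $2^{\ell-1}$ (both $2n'-bm$ and $m-2bn'$ are even when $m$ is even), while for odd $m$ each summand is an $(\ell-1)$-st power of an odd integer. Expanding these odd powers by the binomial theorem yields a family of mod-$2^s$ congruences relating $[q^N]\widetilde g_\ell$ across different even $\ell$; at base level they already produce $[q^N]\widetilde g_\ell\equiv[q^N]\widetilde g_2\pmod 2$ for every even $\ell\geq 2$, which is precisely what is needed to handle the $n=4$ case. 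Substituting these congruences into the recursion together with the inductive hypothesis, the denominators $2^{n-1}$ and $2^{n-\ell-2}$ cancel, and the desired integrality follows.

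The main obstacle is organizing the higher $2$-adic congruences for general $n$; I expect this to reduce, after careful binomial bookkeeping, to combinatorial identities among the factors $n,n-\ell$ and the binomials $\binom{n-1}{\ell-1}$ appearing in the recursion, which conspire to match the $2$-adic valuations of the $\widetilde g_\ell$ and $\widetilde f_{k,\ell}$ exactly.
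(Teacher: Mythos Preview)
Your proposal is explicitly incomplete: you handle $n=2$, sketch $n=4$, and then concede that ``organizing the higher $2$-adic congruences for general $n$'' is the ``main obstacle'' you have not overcome. That is a genuine gap. Moreover, your focus on $2$-adic issues alone is not obviously sufficient: when you expand $f_{k,\ell}\cdot g_{2,2k-1,n-\ell}$ using $f_{k,\ell}=-B_\ell/(2\ell)+\widetilde f_{k,\ell}$ and $g_{2,2k-1,n-\ell}=(1-2^{n-\ell-1})B_{n-\ell}/(2(n-\ell))+\widetilde g_{n-\ell}$, the cross terms pick up odd-prime denominators from the Bernoulli numbers (e.g.\ already at $n=6$ one sees factors like $1/24$ and $7/240$). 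These may well cancel after substituting the recursion for the lower $\widetilde f_{k,\ell}$, but you have not shown this, and it is not part of the ``$2$-adic bookkeeping'' you describe.

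The paper avoids all of this by working one level up. Instead of the recursion in terms of the $g_{2,2k-1,\ell}$ (which carries the troublesome $2^{n-1}$ and $2^{n-\ell-2}$), it combines Lemma~\ref{lem:bern} with (3.1) and (5.2) to obtain the clean generating-function identity
\[
\exp\!\left(2\sum_{j\ge 1}\Bigl(f_{k,j}(\tau)+\tfrac{B_j}{2j}\Bigr)\tfrac{z^j}{j!}\right)=1+(q)_\infty\sum_{j\ge 1}R_{k,j}(q)\,\tfrac{z^j}{j!},
\]
where $(q)_\infty R_{k,j}(q)\in 2\,\mathbb{Z}[[q]]$ is immediate from (5.3). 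Differentiating and comparing coefficients yields the trivial induction $2\widetilde f_{k,n}=(q)_\infty R_{k,n}-\sum_{\ell}\binom{n-1}{\ell-1}\cdot 2\widetilde f_{k,\ell}\cdot (q)_\infty R_{k,n-\ell}$, in which every term on the right is already in $2\,\mathbb{Z}[[q]]$; this is the argument cited from \cite{bringmann2025mockeisensteinseriesassociated}. In effect the $2$-adic congruences among the $\widetilde g_\ell$ you are trying to prove are \emph{equivalent} (via Lemma~\ref{lem:sums}) to the evident fact that $(q)_\infty R_{k,j}\in 2\,\mathbb{Z}[[q]]$, so the paper's route gives them for free rather than as an obstacle.
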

Note, that our results for the $f_{k,j}$ from Theorem \ref{thm:main} (i) and (ii), Theorem \ref{thm:rec} and Theorem \ref{thm:integral} are similar to the results from \cite{bringmann2025mockeisensteinseriesassociated} for their $h_j$, however the key difference in our approach lies in the definition of the $f_{k,j}.$ These are not only defined in terms of the generating function of the $k$-rank but as a sum of  the generating function and $\tfrac{\theta_{1,2k-1}(\tau)}{(q)_\infty}$ (see (3.1)). This additional term arises in our calculations in Subsection 2.5,  had to be accounted for in all subsequent calculations and is necessary in order to guarantee that the $f_{k,j}^*$ in Theorem \ref{thm:main} (2) are quasi-completions of the $f_{k,j}.$ 

The authors of \cite{bringmann2025mockeisensteinseriesassociated} could make use of the fact that the generating functions of the crank and the rank are linked via a partial differential equation.  For the general $k$-ranks we had to resort to rank-crank type PDEs involving \textit{level l Appell series} (see Subsections 2.4 and 2.7). These higher order differential equations with considerably more terms than the one in \cite{bringmann2025mockeisensteinseriesassociated} 
made the calculations way more intricate and resulted in a larger amount of generators with higher order of derivatives needed in order to define the algebra in (3) of Theorem \ref{thm:main} (and those in Theorem \ref{thm:alge}). In order to ensure that only finite order  derivatives of these generators were needed, we had to utilize an additional differential equation. 

This paper is structured as follows:  In Section 2 we supply the necessary preliminaries used in the later sections. In the third section proofs for (1) and (2) of Theorem \ref{thm:main} as well as the representation of the $R_{k,j}(q)$ in term of traces, are provided. (3) of Theorem \ref{thm:main} is proved in Section 4. Here we also sketch a proof of a more general result, regarding the functions $f_{k,j}$. In the fifth section proofs for Theorems \ref{thm:rec} and \ref{thm:integral} are given. Afterwards we also give some examples of the Fourier coefficients of some of these functions. 

 \section*{Acknowledgments}

I would like to thank my dissertation advisor Kathrin Bringmann for suggesting the topic of this paper and guidance in the  research endeavor. I also want to thank my colleagues Johann Franke, Caner Nazaroglu,  Badri  Vishal Pandey, Johann Stumpenhusen and Jan-Willem van Ittersum  for helpful comments on previous versions of this paper and insightful discussions. This research was funded by the European Research Council (ERC) under the European Union´s Horizon 2020 research and innovation programme (grant agreement No. 101001179). 

\section{Preliminaries}
\subsection{Successive Durfee squares and $k$-ranks} The \textit{Durfee square} of a partition $\lambda$, visually speaking, is the largest square contained in its Ferres diagram. The size of this square is the largest natural number $r$ such that $\lambda$ contains at least $r$ parts, all at least of size $r$.  The square is named after the American mathematician William Pitt Durfee \cite{andrews}.  

In order to calculate the sizes of successive Durfee squares, we proceed as follows: $d_1(\lambda)$ is simply the size of the Durfee square of $\lambda$. This square splits the graphical representation of $\lambda$ in three sections: the square itself, the region to the right of the square and the region below the square. If the resulting region below the square is non empty, $d_2(\lambda)$ is the size of its Durfee square. To determine the other $d_\ell(\lambda)$ continue this process, until the region below the last Durfee square is empty. If the region below the $n$-th successive Durfee square is empty, we have $d_m(\lambda)=0$ for all $m \geq n+1.$ This process is also explained in \cite[Section 3]{fg3}.

To illustrate, we give the following example: Here $\lambda=(7,4,4,3,2,1) \vdash 21: $
\\\begin{center}
\begin{tikzpicture}[scale=0.7] 
\node at (1,1) [circle,fill,inner sep=1.5pt]{};
\node at (1,2) [circle,fill,inner sep=1.5pt]{};
\node at (1,3) [circle,fill,inner sep=1.5pt]{};
\node at (1,4) [circle,fill,inner sep=1.5pt]{};
\node at (1,5) [circle,fill,inner sep=1.5pt]{};
\node at (1,6) [circle,fill,inner sep=1.5pt]{};
\node at (2,2) [circle,fill,inner sep=1.5pt]{};
\node at (2,3) [circle,fill,inner sep=1.5pt]{};
\node at (2,4) [circle,fill,inner sep=1.5pt]{};
\node at (2,5) [circle,fill,inner sep=1.5pt]{};
\node at (2,6) [circle,fill,inner sep=1.5pt]{};
\node at (3,3) [circle,fill,inner sep=1.5pt]{};
\node at (3,4) [circle,fill,inner sep=1.5pt]{};
\node at (3,5) [circle,fill,inner sep=1.5pt]{};
\node at (3,6) [circle,fill,inner sep=1.5pt]{};
\node at (4,4) [circle,fill,inner sep=1.5pt]{};
\node at (4,5) [circle,fill,inner sep=1.5pt]{};
\node at (4,6) [circle,fill,inner sep=1.5pt]{};
\node at (5,6) [circle,fill,inner sep=1.5pt]{};
\node at (6,6) [circle,fill,inner sep=1.5pt]{};
\node at (7,6) [circle,fill,inner sep=1.5pt]{};
\draw (0.5,6.5) -- (3.5,6.5);
\draw (3.5,6.5) -- (3.5,3.5);
\draw (0.5,3.5) -- (0.5,6.5);
\draw (0.5,3.5) -- (3.5,3.5);
\draw (0.5,1.5) -- (0.5,3.5);
\draw (0.5,1.5) -- (2.5,1.5);
\draw (2.5,1.5) -- (2.5,3.5);
\draw (0.5,0.5) -- (0.5,1.5);
\draw (0.5,0.5) -- (1.5,0.5);
\draw (1.5,0.5) -- (1.5,1.5);
\end{tikzpicture}
\end{center}
\begin{center}
    \textit{Fig. 1: Successive Durfee squares of $\lambda=(7,4,4,3,2,1)$} 
\end{center}
The first Durfee square has a size of $3.$ The second one right below it has a size of $2$, while the third has size 1. Since there are no further nodes below the third square, we have $d_1(\lambda)=3, d_2(\lambda)=2, d_3(\lambda)=1$ and $ d_k(\lambda)=0 $  for $k \geq 4.$ 

Garvan \cite[Section 4]{fg3}  showed that for $k \geq 3$ the generating function of $N_k(m,n)$ is given by
\begin{align*}
     FG_k(\zeta,q)&:=\sum_{n=0}^\infty \sum_{m\in \mathbb{Z}} N_k(m,n)\zeta^mq^n
     \\ &\hphantom{:}= \sum_{n_{k-1} \geq n_{k-2}\geq ... \geq n_2 \geq n_1 \geq 1} \frac{q^{n_1^2+...+n_{k-1}^2}}{(q)_{n_{k-1}-n_{k-2}}...(q)_{n_2-n_1}(\zeta q)_{n_1}(\zeta^{-1}q)_{n_1}}, 
 \end{align*}
 for $|q|<|\zeta|<|q|^{-1}.$
\subsection{Quasimodular forms and Eisenstein series.} For integral $k \geq 3$ the Eisenstein series $G_k$ are the canonical examples of modular forms of  weight $k$. That is, for $\left ( {a \atop c} {b \atop d} \right)\in \text{SL}_2(\mathbb{Z}),$ we have
\begin{align*}
    G_k \left(\frac{a\tau+b}{c\tau+d} \right) = (c\tau+d)^k G_k(\tau). 
\end{align*}
The function $G_2$ itself does not constitute a modular form of weight $2$, because we have
\begin{align*}
    G_2 \left(\frac{a\tau+b}{c\tau+d} \right) = (c\tau+d)^2 G_2(\tau) + \frac{ic}{4\pi} (c\tau+d). \tag{2.1}
\end{align*}
However, by adding a non-holomorphic term $G_2$ can be made modular. More precisely, for $\tau=:u+iv, u,v \in \mathbb{R}$ we set 
\begin{align*}
    G_2^*(\tau):= G_2(\tau) + \frac{1}{8 \pi v}.
\end{align*}
Then we obtain the transformation
\begin{align*}
    G_2^*\left(\frac{a\tau+b}{c\tau+d} \right) = (c\tau+d)^2 G_2^*(\tau).
\end{align*}
For a more in depth discussion of the Eisenstein series see for example \cite{KK} or \cite{don}.
Because $G_2$ can be recovered from $G_2^*$ by taking the limit $\overline{\tau} \to -i\infty,$ it constitutes a quasimodular form (we will define that in a moment). By their definition, the Eisenstein series are closely related to the Bernoulli numbers, we have
\begin{align*}
    \lim_{\tau \to i\infty} G_k(\tau)= - \frac{B_k}{2k}. 
\end{align*}
Another well-known property of the Eisenstein series is that the algebra $\mathbb{C}[G_2,G_4,G_6,...]$ of quasimodular forms is closed under the action of $ \frac{1}{2\pi i}\frac{\partial}{\partial \tau}.$ 

We call a function $F: \mathbb{H} \to \mathbb{C}$ an \textit{almost holomorphic modular form} of weight $k$ and depth $s$, if the following conditions hold:
\begin{itemize}[leftmargin=0.25in]
    \item[(1)] for all $\left( {a \atop c}{b \atop d}\right) \in \text{SL}_2(\mathbb{Z})$ it holds that
    \begin{align*}
        F\left( \frac{a\tau+b}{c\tau+d} \right) = (c\tau+d)^k F(\tau),
    \end{align*}
    \item [(2)] there exist finitely many holomorphic functions $f_j : \mathbb{H} \to \mathbb{C}$ with $f_s \not\equiv  0$ and
    \begin{align*}
        F(\tau):= \sum_{j=0}^s \frac{f_j(\tau)}{v^j},
    \end{align*}
    \item[(3)] and $F$ grows at most polynomially in $v^{-1} $ as $v \to 0.$
\end{itemize}
By convention, we set the depth of the zero function as $-\infty.$ We call $f_0$ a \textit{quasimodular form} of weight $k$ and depth $s.$ It can be obtained from $F$ by
\begin{align*}
    \lim_{\overline{\tau} \to -i \infty} F(\tau)=f_0(\tau).
\end{align*}
Further, an analytical function $G(\tau,\overline{\tau})$ is said to transform like a quasimodular form if there exist real-analytic functions $g_j(\tau,\overline{\tau})$ so that
\begin{align*}
    (c\tau+d)^{-k} G\left(\frac{a\tau+b}{c\tau+d}, \frac{a\overline{\tau}+b}{c\overline{\tau}+d} \right) = \sum_{j=0}^s g_j(\tau,\overline{\tau}) \left( \frac{c}{c\tau+d} \right)^j.
\end{align*}
\subsection{Crank moments.} In \cite[Theorem 1.2]{amdeberhan2024derivativesthetafunctionstraces} the authors showed that the generating function of the crank  
\begin{align*}
    C(\zeta,q):= \sum_{n=0}^\infty  \sum_{m\in\mathbb{Z}} N_1(m,n)\zeta^mq^n ,
\end{align*}
can be rewritten as 
\begin{align*}
    C(\zeta,q)= \frac{\sin(\pi z)}{\pi z (q)_\infty} \exp\left(2 \sum_{k= 2}^\infty G_k(\tau) \frac{(2\pi i z)^k}{k!} \right). \tag{2.2}
\end{align*} 
They concluded from this (see Lemma 3.1) the following lemma:
\begin{lemma}\label{lem:bern}
\textit{It holds that}
\begin{align*}
    \frac{\zeta^{\frac{1}{2}}}{\zeta-1} = \frac{1}{2\pi i z} \exp \left( - \sum_{k= 2}^\infty \frac{B_k}{k} \frac{(2\pi i z)^k}{k!} \right). \tag{2.3}
\end{align*}
\end{lemma}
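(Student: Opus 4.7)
The plan is to derive the identity directly from the classical generating function for the Bernoulli numbers, $\frac{x}{e^x-1}=\sum_{k\geq 0}B_k\tfrac{x^k}{k!}$, by integrating once to produce the factor $\tfrac{1}{k}$ on the right-hand side of (2.3), and then exponentiating and substituting $x=2\pi i z$.

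First I would isolate the tail of the Bernoulli generating series. Using $B_0=1$ and $B_1=-\tfrac12$, the identity $\frac{x}{e^x-1}=\sum_{k\geq 0} B_k\tfrac{x^k}{k!}$ rearranges to
\begin{align*}
\frac{1}{e^x-1}=\frac{1}{x}-\frac{1}{2}+\sum_{k\geq 2}B_k\frac{x^{k-1}}{k!}.
\end{align*}
Adding $1$ to both sides gives $\frac{e^x}{e^x-1}=\frac{1}{x}+\frac12+\sum_{k\geq 2}B_k\frac{x^{k-1}}{k!}$, which is precisely $\frac{d}{dx}\log(e^x-1)$ on the left. Integrating term-by-term and matching the $\log x$ singularity at $x=0$ to fix the constant of integration, I obtain
\begin{align*}
\log(e^x-1)=\log x+\frac{x}{2}+\sum_{k\geq 2}\frac{B_k}{k}\frac{x^k}{k!}.
\end{align*}

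Next I would exponentiate to get $\exp\bigl(\sum_{k\geq 2}\tfrac{B_k}{k}\tfrac{x^k}{k!}\bigr)=\tfrac{e^{x/2}-e^{-x/2}}{x}$, so that after taking the reciprocal,
\begin{align*}
\exp\!\left(-\sum_{k\geq 2}\frac{B_k}{k}\frac{x^k}{k!}\right)=\frac{x}{e^{x/2}-e^{-x/2}}.
\end{align*}
Finally, substituting $x=2\pi i z$ and using $\zeta=e^{2\pi i z}$, so that $e^{x/2}-e^{-x/2}=\zeta^{1/2}-\zeta^{-1/2}=(\zeta-1)\zeta^{-1/2}$, yields exactly (2.3) after dividing by $2\pi i z$.

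There is no real obstacle here: the only subtlety is justifying the constant of integration after the logarithmic step, which is handled by comparing both sides as $x\to 0$ (where $\log(e^x-1)-\log x\to 0$). Everything else is formal manipulation of power series valid in a neighborhood of $z=0$, which suffices since both sides of (2.3) are meromorphic.
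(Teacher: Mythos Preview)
Your proof is correct. The paper itself does not give a proof of this lemma; it simply cites it from \cite{amdeberhan2025tracespartitioneisensteinseries}, remarking that those authors ``concluded'' it from the crank identity (2.2). Presumably their route is to let $\tau\to i\infty$ in (2.2), using $C(\zeta,q)\to 1$, $(q)_\infty\to 1$, and $G_k(\tau)\to -\tfrac{B_k}{2k}$, which collapses (2.2) directly to (2.3). Your argument instead derives (2.3) from first principles via the Bernoulli generating function, integrating once to produce the $\tfrac{1}{k}$ and then exponentiating. This is more elementary and fully self-contained, avoiding any appeal to the crank generating function; the paper's implied route is shorter only because (2.2) is already granted as input.
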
 
The Bernoulli numbers $B_k$ are related to the Bernoulli polynomials $B_k(x)$, as they are the constant term of the corresponding polynomial. The Bernoulli polynomials satisfy 
\begin{align*}
    \frac{ze^{zx}}{e^z-1}= \sum_{k=0}^\infty  B_k(x) \frac{z^k}{k!}. \tag{2.4}
\end{align*}A useful property of the Bernoulli polynomials is that
\begin{align*}
    B_k(x+y)= \sum_{n=0}^k  \binom{k}{n} B_{k-n}(x)y^n. \tag{2.5}
\end{align*}
\subsection{Mock modularity of the $k$-rank generating function.} Garvan \cite[Theorem 1.12]{fg3} showed that the number of partitions of $n$ with $k$-rank $m$, $N_k(m,n)$,  satisfy\footnote{To be more precise: Garvan defined $N_k(m,n)$ as the coefficients of the $q$-series on the right-hand side. He then definition the $k$-rank to explain $N_k$ combinatorially.}
\begin{align*}
    \sum_{n=0}^\infty N_k(m,n)q^n  = \frac{1}{(q)_\infty} \sum_{n=1}^\infty (-1)^{n-1} q^{\frac{n((2k-1)n-1)}{2} +|m|n}(1-q^n) \tag{2.6}
\end{align*} 
Using this, one calculates
\begin{align*}
    FG_k(\zeta,q) 
      &= \frac{1}{(q)_\infty} \sum_{n =1}^\infty (-1)^{n+1}\frac{q^{ \frac{(2k-1)n^2-n}{2}}(1-q^n)(1-q^{2n})}{(1-\zeta q^n)(1-\zeta^{-1}q^n)}.
\end{align*} 
In \cite[Theorem 1.1]{Dixit} Chan, Dixit and Garvan showed that the $k$-rank generating functions are closely related to the \textit{level $\ell$ Appell series} for  $\ell \in \mathbb{N}$:
\begin{align*}
    A_\ell(z,w,\tau):= \zeta^{\frac{\ell}{2}} \sum_{n \in \mathbb{Z}} (-1)^{\ell n} \frac{q^{\frac{\ell n(n+1)}{2}}e^{2\pi in w}}{1-\zeta q^n}, 
\end{align*}
 studied by Zwegers in \cite{zwegersappell}. They showed that
\begin{align*}
    FG_k(\zeta,q) &= \frac{1}{(q)_\infty} \bigg[ \left(\zeta^{-\frac{1}{2}} - \zeta^{\frac{1}{2}} \right) A_{2k-1}(z,0,\tau) - \zeta \theta_{1,2k-1}(\tau) \\&\hspace{5cm}+ \zeta(1-\zeta) \sum_{m=0}^{k-3} \zeta^m \theta_{2m+3,2k-1}(\tau)\bigg], 
 \end{align*}
 Utilizing $\zeta^{-\frac{1}{2}}-\zeta^{\frac{1}{2}}=-2i\sin(\pi z),$ this can be rearranged to 
 \begin{align*}
     A_{2k-1}(z,0,\tau)= \frac{(q)_\infty FG_k(\zeta,q)}{-2i\sin( \pi z)} - \frac{\zeta}{2i \sin(\pi z)} \theta_{1,2k-1}(\tau) - \zeta^{\frac{3}{2}} \sum_{m=0}^{k-3} \zeta^m \theta_{2m+3,2k-1}(\tau).
 \end{align*}
 
 In \cite[Theorem 3]{zwegersappell} Zwegers showed that the Appell function $A_\ell$ can be completed to a Jacobi function $\hat{A}_\ell$, which is defined as:
 \begin{align*}
     \widehat{A}_{2k-1}(z,0,\tau):= A_{2k-1}(z,0,\tau)+H_k(z,\tau), 
 \end{align*}
 where we have
 \begin{align*}
     H_k(z,\tau):= \frac{i}{2} \sum_{\ell=0}^{2k-2} \zeta^{\ell} \theta(\ell\tau,(2k-1)\tau) R((2k-1)z-\ell\tau,(2k-1)\tau), \tag{2.7}
 \end{align*}
 with
\begin{align*}
    \theta(z,\tau):= \sum_{v \in \mathbb{Z} +\frac{1}{2}} e^{\pi i v^2\tau+2\pi i v \left(z+ \frac{1}{2} \right)}, \tag{2.8}
\end{align*}
\begin{align*}
    R(z,\tau):= \sum_{v \in \mathbb{Z} + \frac{1}{2}}\left[\text{sgn}(v)-E \left(\left(v+\frac{\text{Im}(z)}{\text{Im}(\tau)}\right)\sqrt{2\text{Im}(\tau)}\right) \right](-1)^{v- \frac{1}{2}}q^{-\frac{v^2}{2}}e^{-2\pi i v z} . \tag{2.9}
\end{align*}
Here, we have
\begin{align*}
    E(y):= 2 \int_0^y e^{-\pi t^2}dt=\text{sgn}(y) (1-\beta(y^2)), 
\end{align*}
with
\begin{align*}
    \beta(y):=\int_y^\infty u^{-\frac{1}{2}} e^{-\pi u} du. 
\end{align*} As we will be only interested in Appell series where the second argument $u=0,$ we denote them by 
\begin{align*}
    A_\ell(z,\tau):=A_\ell(z,0,\tau).
\end{align*}
 From  \cite[Theorem 4]{zwegersappell}, we get that for $\left( {a \atop c} {b \atop d}  \right) \in \text{SL}_2(\mathbb{Z})$ it holds that
 \begin{align*}
     \widehat{A}_{2k-1}  \left(\frac{z}{c\tau+d}, \frac{a\tau+b}{c\tau+d} \right)= (c\tau +d) e^{\frac{-(2k-1) \pi i c z^2}{(c\tau+d)}}\widehat{A}_{2k-1}(z,\tau). \tag{2.10}
 \end{align*}
 In light of this, we define
   \begin{align*}
     \widehat{FG}_k (z,\overline{z},\tau,\overline{\tau})&:= FG_k(\zeta,q)- \frac{2i\sin(\pi z)}{(q)_\infty}H_k(z,\tau)  \\&\;\hspace{1cm}+ \frac{\zeta}{(q)_\infty}  \theta_{1,2k-1}(\tau) - \frac{\zeta(1-\zeta)}{(q)_\infty} \sum_{m=0}^{k-3} \zeta^m \theta_{2m+3,2k-1}(\tau)
     \\ &\;= \frac{-2i\sin(\pi z)}{(q)_\infty} \widehat{A}_{2k-1}(z,\tau).
 \end{align*}
 Rewriting this and factoring in another normalizing term, we get
 \begin{align*}
     \eta(\tau)  K_k(z,\tau) = \widehat{A}_{2k-1}(z,\tau) e^{4(2k-1)\pi^2z^2G_2(\tau)},
 \end{align*}
 with
 \begin{align*}
    K_k(z,\tau):=  \frac{\widehat{FG}_k(z,\overline{z},\tau,\overline{\tau})q^{-\frac{1}{24}}i}{2\sin(\pi z)} e^{4(2k-1)\pi^2 z^2 G_2(\tau)}
 \end{align*}
 Here $\eta(\tau):= q^{\frac{1}{24}} (q)_\infty $ is the usual Dedekind eta-function. Based on (2.10), we have the following  lemma:
\begin{lemma}\label{lem:trans}
\textit{For}  $\left( {a \atop c} {b \atop d}  \right) \in \text{SL}_2(\mathbb{Z})$ \textit{it holds that
 \begin{align*}
     \eta\left(\frac{a\tau+d}{c\tau+d} \right)  K_k\left( \frac{z}{c\tau+d},\frac{a\tau+b}{c\tau+d}\right) = (c\tau+d) \eta(\tau)  K_k(z,\tau), \tag{2.11}
 \end{align*}
 that is, $\eta(\tau)\cdot K_k(z,\tau)$ modular transforms like a Jacobi form of weight $1$ and index $0.$ }
 \end{lemma}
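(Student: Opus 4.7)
The plan is to leverage the factorization $\eta(\tau) K_k(z,\tau) = \widehat{A}_{2k-1}(z,\tau) e^{4(2k-1)\pi^2 z^2 G_2(\tau)}$ given just above the lemma, which cleanly separates the mock-modular content (sitting in $\widehat{A}_{2k-1}$) from the quasimodular correction (sitting in the exponential involving $G_2$). The strategy is to substitute $\left(\tfrac{z}{c\tau+d}, \tfrac{a\tau+b}{c\tau+d}\right)$ into this identity, then apply the two known transformation laws, namely (2.10) for $\widehat{A}_{2k-1}$ and the quasimodular law (2.1) for $G_2$, and check that the various exponential anomalies cancel.

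Concretely, I would first write
\begin{align*}
\eta\!\left(\tfrac{a\tau+b}{c\tau+d}\right) K_k\!\left(\tfrac{z}{c\tau+d},\tfrac{a\tau+b}{c\tau+d}\right) = \widehat{A}_{2k-1}\!\left(\tfrac{z}{c\tau+d},\tfrac{a\tau+b}{c\tau+d}\right)\exp\!\left(4(2k-1)\pi^2 \tfrac{z^2}{(c\tau+d)^2}\, G_2\!\left(\tfrac{a\tau+b}{c\tau+d}\right)\right).
\end{align*}
Using (2.10), the first factor becomes $(c\tau+d)\,e^{-(2k-1)\pi i c z^2/(c\tau+d)}\,\widehat{A}_{2k-1}(z,\tau)$. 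Using (2.1), the argument of the exponential becomes
\begin{align*}
4(2k-1)\pi^2 \tfrac{z^2}{(c\tau+d)^2}\!\left[(c\tau+d)^2 G_2(\tau)+\tfrac{ic}{4\pi}(c\tau+d)\right]=4(2k-1)\pi^2 z^2 G_2(\tau)+\tfrac{(2k-1)\pi i c z^2}{c\tau+d}.
\end{align*}
The anomalous term $\tfrac{(2k-1)\pi i c z^2}{c\tau+d}$ from the $G_2$-transformation exactly cancels the factor $e^{-(2k-1)\pi i c z^2/(c\tau+d)}$ coming from the Jacobi transformation of $\widehat{A}_{2k-1}$, and what remains is precisely $(c\tau+d)\,\widehat{A}_{2k-1}(z,\tau)\,e^{4(2k-1)\pi^2 z^2 G_2(\tau)}=(c\tau+d)\eta(\tau)K_k(z,\tau)$, which is the desired identity.

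There is no real obstacle here; the content of the lemma is that the non-holomorphic completion $\widehat{A}_{2k-1}$ and the quasimodular completion via $G_2$ were designed so that their failures of modularity are of the same form, up to the constant $(2k-1)$ which is exactly the coefficient chosen in the definition of $K_k$. The only thing to be careful about is matching the constant in front of $z^2 G_2(\tau)$ in the definition of $K_k$: it must equal the index $(2k-1)$ appearing in the elliptic anomaly of $\widehat{A}_{2k-1}$ from (2.10) multiplied by the appropriate $4\pi^2$ and divided by the $\tfrac{ic}{4\pi}$ coming from (2.1), which is precisely what the prefactor $4(2k-1)\pi^2$ in the exponential accomplishes. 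Once this bookkeeping is checked, the transformation (2.11) follows immediately.
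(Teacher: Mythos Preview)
Your argument is correct and is exactly the approach the paper intends: it states the lemma immediately after the factorization $\eta(\tau)K_k(z,\tau)=\widehat{A}_{2k-1}(z,\tau)e^{4(2k-1)\pi^2 z^2 G_2(\tau)}$ with the phrase ``Based on (2.10), we have the following lemma,'' leaving the cancellation between the Jacobi anomaly of $\widehat{A}_{2k-1}$ and the quasimodular anomaly of $G_2$ implicit. Your write-up simply makes this bookkeeping explicit.
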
 
\subsection{The holomorphic part of $H_k$ and $\widehat{A}_{2k-1}$ in $\tau$.} For our proceedings, we need to better understand the function $H_k(z,\tau),$ especially when taking the limit $\overline{\tau} \to -i\infty.$ For this we need some preliminary results. We start with a result giving a relation between the two  $\theta$-functions introduced in (1.5) and (2.8). Using these and some simple algebraic manipulations, one verifies the following lemma: 
\begin{lemma}\label{lem:theta} 
\textit{For $\ell=k,...,2k-2$ we have,}
\begin{align*}
   i q^{-\frac{2k-1}{8}+ \frac{\ell}{2}} \theta(\ell\tau,(2k-1)\tau)= \theta_{2\ell-(2k-1),2k-1}(\tau).
\end{align*}
\end{lemma}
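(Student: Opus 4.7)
The plan is a direct computation from the definitions. Setting $v = n + \frac{1}{2}$ for $n \in \mathbb{Z}$ in the definition (2.8),
\begin{align*}
\theta(\ell\tau, (2k-1)\tau) = \sum_{n \in \mathbb{Z}} e^{\pi i (2k-1)(n+\frac{1}{2})^2 \tau + 2\pi i (n+\frac{1}{2})(\ell\tau + \frac{1}{2})}.
\end{align*}
The key observation is that the constant part $e^{2\pi i (n+\frac{1}{2}) \cdot \frac{1}{2}}$ factors as $e^{\pi i n} e^{\pi i/2} = (-1)^n i$, which at once supplies the alternating sign in the target definition (1.5) and explains the factor of $i$ on the left-hand side of the lemma. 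Expanding the remaining exponent and using $e^{\pi i x \tau} = q^{x/2}$, the $n$-independent part of the exponent becomes exactly $q^{\frac{2k-1}{8} + \frac{\ell}{2}}$, so after multiplying through by $iq^{-\frac{2k-1}{8} + \frac{\ell}{2}}$ we are left with
\begin{align*}
-q^{\ell} \sum_{n \in \mathbb{Z}} (-1)^n q^{\frac{(2k-1)n^2 + (2k-1+2\ell)n}{2}}.
\end{align*}

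The linear coefficient $2k-1+2\ell$ does not yet match the required $2\ell - (2k-1)$, so the second step is the shift $n \mapsto n-1$. This rewrites the exponent as $\frac{(2k-1)n^2 + (2\ell - (2k-1))n - 2\ell}{2}$, producing a compensating factor $q^{-\ell}$ that cancels the external $q^{\ell}$, while the parity change $(-1)^{n-1} = -(-1)^n$ absorbs the leading minus sign. What remains is precisely $\theta_{2\ell - (2k-1), 2k-1}(\tau)$ in the form given in (1.5).

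The argument is elementary, and the only real hazard is the bookkeeping of three distinct sources of signs: the $i^2 = -1$ from combining the explicit $i$ with the $i$ extracted from $e^{\pi i/2}$, the $(-1)^n$ coming from $e^{\pi i n}$, and the parity flip from the index shift. The stated range $\ell \in \{k, \dots, 2k-2\}$ plays no role in the identity itself; it simply records the values of $\ell$ that actually occur when the right-hand side of (2.7) is expanded in terms of these $\theta$-functions.
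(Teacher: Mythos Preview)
Your proof is correct and is exactly the ``simple algebraic manipulation'' the paper alludes to without spelling out: substitute $v=n+\tfrac12$ in (2.8), extract the factor $(-1)^n i$ and the $n$-independent power of $q$, then shift $n\mapsto n-1$ to match the linear coefficient in (1.5). Your remark that the restriction $\ell\in\{k,\dots,2k-2\}$ is not used in the identity itself is also accurate.
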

Next, we want to study the limiting behavior of the $R$-terms in the definition of $H_k$. This result is stated in the following lemma: 
\begin{lemma}\label{lem:limitR}
\textit{We have
\begin{align*}
    \lim_{\overline{\tau} \to -i\infty } R((2k-1)z-\ell\tau,(2k-1)\tau)= \begin{cases}
        0 & \text{if } \ell=0,...,k-1,\\
        2 q^{-\frac{2k-1}{8} + \frac{\ell}{2}} \zeta^{- \frac{2k-1}{2}} &  \text{if } \ell=k,...,2k-2.
    \end{cases}
\end{align*}}
\end{lemma}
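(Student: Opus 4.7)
The plan is to substitute $z \mapsto (2k-1)z - \ell\tau$ and $\tau \mapsto (2k-1)\tau$ into the defining series (2.9) for $R$, take the pointwise limit of each summand as $\overline{\tau}\to -i\infty$ (so that $\text{Im}(\tau)\to\infty$), and justify the interchange of limit and summation by dominated convergence. Writing $E(y) = \text{sgn}(y)(1-\beta(y^2))$ I split
\begin{align*}
    \text{sgn}(v) - E(y_v(\tau)) = \bigl(\text{sgn}(v) - \text{sgn}(y_v(\tau))\bigr) + \text{sgn}(y_v(\tau))\,\beta(y_v(\tau)^2),
\end{align*}
where after simplification the argument of $E$ becomes
\begin{align*}
    y_v(\tau) = \Bigl(v - \tfrac{\ell}{2k-1} + \tfrac{\text{Im}(z)}{\text{Im}(\tau)}\Bigr)\sqrt{2(2k-1)\text{Im}(\tau)}.
\end{align*}
Since $2k-1$ is odd, $\ell/(2k-1)$ is never a half-integer, so $v - \ell/(2k-1)\ne 0$ for every $v\in\mathbb{Z}+\tfrac{1}{2}$. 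It follows that $\text{sgn}(y_v(\tau))\to \text{sgn}(v-\ell/(2k-1))$ and $\beta(y_v(\tau)^2)\to 0$ exponentially as $\text{Im}(\tau)\to\infty$.

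The pointwise limit of the square bracket in each summand therefore equals $\text{sgn}(v) - \text{sgn}(v - \ell/(2k-1))$, which is nonzero only for the (at most one) half-integer $v$ lying strictly between $0$ and $\ell/(2k-1)$. An elementary case analysis splits naturally in two: for $0\leq \ell\leq k-1$ one has $\ell/(2k-1)<\tfrac{1}{2}$, so the two signs agree for every $v\in\mathbb{Z}+\tfrac{1}{2}$ and the whole sum tends to $0$; for $k\leq \ell\leq 2k-2$ one has $\ell/(2k-1)\in(\tfrac{1}{2},1)$, so precisely $v=\tfrac{1}{2}$ contributes, with sign jump $1-(-1)=2$. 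Plugging $v=\tfrac{1}{2}$ into the remaining exponential factors yields
\begin{align*}
    2\cdot(-1)^{0}\cdot q^{-\frac{2k-1}{8}}\cdot e^{-\pi i((2k-1)z-\ell\tau)} = 2\, q^{-\frac{2k-1}{8}+\frac{\ell}{2}}\,\zeta^{-\frac{2k-1}{2}},
\end{align*}
which is the stated value.

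The main technical obstacle is justifying the termwise passage to the limit, since the individual exponential factors $|q^{-(2k-1)v^2/2}|=e^{\pi(2k-1)v^2\,\text{Im}(\tau)}$ blow up in $\text{Im}(\tau)$ and must be controlled by the Gaussian decay of $\beta$. Using the elementary estimate $\beta(y^2)\leq e^{-\pi y^2}/(\pi|y|)$ together with $|e^{-2\pi iv((2k-1)z-\ell\tau)}|\leq C\,e^{-2\pi v\ell\,\text{Im}(\tau)}$, a completion of the square yields, for all $\text{Im}(\tau)\geq T_0\gg 0$, the uniform majorant
\begin{align*}
    C'\exp\!\Bigl(-\pi T_0\bigl[(2k-1)\bigl(v-\tfrac{\ell}{2k-1}\bigr)^2+\tfrac{\ell^2}{2k-1}\bigr]\Bigr)
\end{align*}
for the $\beta$-part of every summand. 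This is Gaussian in $v$, hence summable, so dominated convergence applies and the proof concludes; the finitely many (at most one) Case~2 indices are handled directly as above.
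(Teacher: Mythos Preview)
Your argument follows the paper's exactly: substitute into (2.9), simplify the argument of $E$ to $\bigl(v-\tfrac{\ell}{2k-1}+\tfrac{\mathrm{Im}(z)}{\mathrm{Im}(\tau)}\bigr)\sqrt{2(2k-1)\mathrm{Im}(\tau)}$, observe that the sign jump survives only at $v=\tfrac12$ when $k\le\ell\le 2k-2$, and evaluate. The paper simply pushes the limit inside the sum without comment, so your dominated-convergence paragraph is additional rigor rather than a different route.

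One small slip to clean up: the inequality $|e^{-2\pi i v((2k-1)z-\ell\tau)}|\le C\,e^{-2\pi v\ell\,\mathrm{Im}(\tau)}$ cannot hold with $C$ independent of $v$, since the suppressed factor is $e^{2\pi v(2k-1)\,\mathrm{Im}(z)}$, which is unbounded in $v$. This linear-in-$v$ term is harmless, however---after your completion of the square it is absorbed by the Gaussian $-\pi(2k-1)\,\mathrm{Im}(\tau)\,(v-\tfrac{\ell}{2k-1})^2$ at the cost of halving the decay constant and adjusting $C'$---so the summable majorant and the conclusion stand.
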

\begin{proof} Using $d:=2k-1$ and (2.9), we get 
\begin{align*}
    &\lim_{\overline{\tau} \rightarrow-i\infty} R(dz -\ell \tau,d\tau)
    \\ &\hspace{1cm}= \sum_{j \in \mathbb{Z}+ \frac{1}{2}} \left(\text{sgn}(j)-\lim_{ v\rightarrow \infty} E\left[ \sqrt{2d}\left(\left(j-\frac{\ell}{d} \right)\sqrt{v}+\frac{ y}{ \sqrt{v}} \right) \right]\right) (-1)^{j-\frac{1}{2}}q^{- \frac{d j^2}{2}+\ell j} \zeta^{-dj}.
\end{align*}
For $\ell=0,...,k-1$ the terms on the right-hand side vanishes, because we have
\begin{align*}
     \lim_{v \to \infty} E\left(\sqrt{2dv} \left(n+\frac{1}{2}-\frac{\ell}{d} + \frac{y}{v}\right) \right) &=\text{sgn}\left(n+ \frac{1}{2}- \frac{\ell}{d} \right) 
     = \begin{cases} 
     1 & \text{\textit{if} } n > \frac{\ell}{d}- \frac{1}{2},\\ 
     0 & \text{\textit{if} } n= \frac{\ell}{d}- \frac{1}{2},\\ 
     -1 & \text{\textit{if} } n< \frac{\ell}{d}- \frac{1}{2}        
    \end{cases} 
    \\&=\text{sgn}\left(n+\frac{1}{2}\right)
\end{align*}
for all $n \in \mathbb{Z}.$ For $\ell=k,...,2k-2$, the equation above holds for all integers, besides $n=0,$ which corresponds to the case $j= \frac{1}{2}$ in the sum. So we get
\begin{align*}
    \lim_{\overline{\tau} \to -i\infty} R(dz-\ell\tau,d\tau)&= 2 q^{-\frac{2k-1}{8} + \frac{\ell}{2}} \zeta^{- \frac{2k-1}{2}}. \qedhere
\end{align*}
\end{proof}
Combining the results of the previous lemmas, we get the following:
\begin{lemma}\label{lem:limitH}
\textit{We have}
\begin{align*}
    \lim_{\overline{\tau} \to -i\infty}\left(\zeta^{-\frac{1}{2}}-\zeta^{\frac{1}{2}} \right)  H_k(z,\tau)=(1-\zeta) \theta_{1,2k-1}(\tau) + \zeta(1-\zeta) \sum_{\ell=0}^{k-3} \zeta^k \theta_{2\ell+3,2k-1}(\tau). 
\end{align*}
\end{lemma}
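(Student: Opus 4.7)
The plan is to substitute the limits already computed in the earlier lemmas into the defining sum for $H_k(z,\tau)$ and then do some bookkeeping. Starting from the definition
\begin{align*}
H_k(z,\tau) = \frac{i}{2}\sum_{\ell=0}^{2k-2} \zeta^{\ell}\, \theta(\ell\tau,(2k-1)\tau)\, R((2k-1)z-\ell\tau,(2k-1)\tau),
\end{align*}
Lemma \ref{lem:limitR} tells us that the terms with $\ell=0,\dots,k-1$ vanish in the limit $\overline{\tau}\to -i\infty$, while for $\ell=k,\dots,2k-2$ the $R$-factor contributes $2 q^{-\frac{2k-1}{8}+\frac{\ell}{2}} \zeta^{-\frac{2k-1}{2}}$. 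Thus
\begin{align*}
\lim_{\overline{\tau}\to-i\infty} H_k(z,\tau) = \zeta^{-\frac{2k-1}{2}} \sum_{\ell=k}^{2k-2} \zeta^{\ell}\, i q^{-\frac{2k-1}{8}+\frac{\ell}{2}}\, \theta(\ell\tau,(2k-1)\tau).
\end{align*}
By Lemma \ref{lem:theta}, the factor $iq^{-\frac{2k-1}{8}+\frac{\ell}{2}}\theta(\ell\tau,(2k-1)\tau)$ collapses to $\theta_{2\ell-(2k-1),2k-1}(\tau)$, which turns the right-hand side into a clean sum of the $\theta_{a,b}$ appearing in the statement.

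Next I would multiply through by $(\zeta^{-1/2}-\zeta^{1/2})$. The prefactors combine as $\zeta^{\ell-\frac{2k-1}{2}-\frac{1}{2}}-\zeta^{\ell-\frac{2k-1}{2}+\frac{1}{2}} = \zeta^{\ell-k}-\zeta^{\ell-k+1}$, so after the reindexing $m=\ell-k$ one obtains
\begin{align*}
(\zeta^{-\frac{1}{2}}-\zeta^{\frac{1}{2}})\lim_{\overline{\tau}\to-i\infty} H_k(z,\tau) = \sum_{m=0}^{k-2} (\zeta^{m}-\zeta^{m+1})\, \theta_{2m+1,2k-1}(\tau).
\end{align*}
Finally, splitting off the $m=0$ summand gives the $(1-\zeta)\theta_{1,2k-1}(\tau)$ piece, while shifting $m\mapsto m+1$ in the remaining sum factors out a $\zeta(1-\zeta)$ and yields precisely the sum $\zeta(1-\zeta)\sum_{m=0}^{k-3}\zeta^{m}\theta_{2m+3,2k-1}(\tau)$ asserted in the lemma (so the exponent $\zeta^k$ displayed in the statement should read $\zeta^m$).

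There is no real obstacle here: all the analytic content sits in Lemmas \ref{lem:theta} and \ref{lem:limitR}, and what remains is purely algebraic reindexing. The only point worth checking carefully is that the cutoff $\ell=k$ in Lemma \ref{lem:limitR} matches the lower index of the Appell/theta decomposition, ensuring that the residual sum after the $m=0$ split has exactly $k-2$ terms, which then matches the range $0\le m\le k-3$ in the target expression.
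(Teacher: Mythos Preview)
Your proposal is correct and follows essentially the same route as the paper: both arguments feed Lemmas~\ref{lem:limitR} and~\ref{lem:theta} into the defining sum for $H_k$, then reindex and split off the $m=0$ term to reach the claimed expression. Your observation that the exponent $\zeta^k$ in the statement is a typo for $\zeta^\ell$ (your $\zeta^m$) is also correct.
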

\begin{proof} We use Lemma \ref{lem:limitR} to get
\begin{align*}
    \lim_{\overline{\tau} \to -i\infty} H_k(z,\tau)&= \lim_{\overline{\tau} \to -i\infty} \frac{i}{2} \sum_{\ell=0}^{k-2} \zeta^\ell \theta(\ell\tau,(2k-1)\tau)R((2k-1)z-\ell\tau,(2k-1)\tau)
    \\&= i \sum_{\ell=k}^{2k-2} \zeta^{\ell-\frac{2k-1}{2}} \theta(\ell\tau,(2k-1)\tau) q^{-\frac{2k-1}{8}+\frac{\ell}{2}}. 
\end{align*}
 Next, using Lemma \ref{lem:theta} and an index shift gives
\begin{align*}
       \lim_{\overline{\tau} \to -i\infty} H_k(z,\tau)&= \sum_{\ell=k}^{2k-2} \zeta^{\ell-\frac{2k-1}{2}}  \theta_{2\ell-(2k-1),2k-1}(\tau) 
    = \sum_{\ell=0}^{k-2} \zeta^{\ell+ \frac{1}{2}} \theta_{2\ell+1,2k-1}(\tau)
\end{align*}
Multiplying this with $\left(\zeta^{-\frac{1}{2}}-\zeta^{\frac{1}{2}} \right)$ and doing some algebraic manipulations, we get
\begin{align*}
      \sum_{\ell=0}^{k-2} \zeta^{\ell}(1-\zeta) \theta_{2\ell+1,2k-1}(\tau)  &=  (1-\zeta) \theta_{1,2k-1}(\tau) + \sum_{\ell=1}^{k-2} \zeta^\ell \left(1-\zeta \right) \theta_{2\ell+1,2k-1}(\tau) 
      \\ &= (1-\zeta) \theta_{1,2k-1}(\tau) + \zeta(1-\zeta) \sum_{\ell=0}^{k-3} \zeta^\ell \theta_{2\ell+3,2k-1}(\tau). 
\end{align*}
This yields the claim.
\end{proof}
Using the result of the lemma above, we can now determine the holomorphic part of \\ $\frac{-2i\sin(\pi z)}{(q)_\infty} \widehat{A}_{2k-1}(z,\tau)$ in $\tau.$ This is the content of the following lemma:
\begin{lemma}\label{lem:lim}
\textit{We have}
\begin{align*}
    \lim_{\overline{\tau} \rightarrow -i\infty} \frac{-2i\sin(\pi z)}{(q)_\infty}\widehat{A}_{2k-1}(z,\tau)
      &= FG_k(\zeta,q) + \frac{\theta_{1,2k-1}(\tau)}{(q)_\infty} . 
\end{align*} 
\end{lemma}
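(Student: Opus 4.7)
The plan is to decompose $\widehat{A}_{2k-1}(z,\tau)=A_{2k-1}(z,\tau)+H_k(z,\tau)$ and handle the two pieces separately. Since $A_{2k-1}(z,\tau)$ is already holomorphic in $\tau$, the limit $\overline{\tau}\to -i\infty$ leaves it unchanged, whereas for $H_k(z,\tau)$ Lemma \ref{lem:limitH} has already done the work of identifying the holomorphic part.

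First I would use the Chan--Dixit--Garvan identity recalled in Subsection 2.4, which after multiplying by $(q)_\infty$ and noting $\zeta^{-\frac{1}{2}}-\zeta^{\frac{1}{2}}=-2i\sin(\pi z)$ reads
\begin{align*}
\frac{-2i\sin(\pi z)}{(q)_\infty}A_{2k-1}(z,\tau) = FG_k(\zeta,q) + \frac{\zeta\,\theta_{1,2k-1}(\tau)}{(q)_\infty} - \frac{\zeta(1-\zeta)}{(q)_\infty}\sum_{m=0}^{k-3}\zeta^m\theta_{2m+3,2k-1}(\tau).
\end{align*}
This is just algebra and gives the ``$A$-part'' of the limit at once, since the right-hand side is holomorphic in $\tau$.

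Next I would add the contribution from $H_k$. By Lemma \ref{lem:limitH},
\begin{align*}
\lim_{\overline{\tau}\to -i\infty}\frac{-2i\sin(\pi z)}{(q)_\infty}H_k(z,\tau) = \frac{1}{(q)_\infty}\left[(1-\zeta)\theta_{1,2k-1}(\tau)+\zeta(1-\zeta)\sum_{\ell=0}^{k-3}\zeta^{\ell}\theta_{2\ell+3,2k-1}(\tau)\right].
\end{align*}

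Finally I would add the two expressions. The sums over $\theta_{2\ell+3,2k-1}$ cancel exactly, while the two $\theta_{1,2k-1}$ contributions combine as $\zeta+(1-\zeta)=1$, producing precisely $FG_k(\zeta,q)+\tfrac{\theta_{1,2k-1}(\tau)}{(q)_\infty}$. There is no genuine obstacle here; the only thing to be careful about is bookkeeping of the $\zeta$-factors so that the cancellations line up correctly, which is exactly what the formulas in the excerpt were set up to guarantee.
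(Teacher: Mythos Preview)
Your proposal is correct and follows exactly the route the paper intends: the lemma is stated without proof as an immediate consequence of Lemma~\ref{lem:limitH} together with the Chan--Dixit--Garvan decomposition recalled in Subsection~2.4, and your write-up makes precisely that cancellation explicit. The bookkeeping with the $\zeta$-factors is right, including the $\zeta+(1-\zeta)=1$ combination for $\theta_{1,2k-1}$ and the exact cancellation of the $\theta_{2m+3,2k-1}$ sums.
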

\subsection{Pólya cycle index polynomials.} In order to show that the $k$-rank moments can be expressed in terms of traces of the family $f_k=\{f_{k,j}\}_{j\in \mathbb{N}}$ as stated in Theorem \ref{thm:main}, we will make use of the Pólya cycle index polynomials in the case of the symmetric group $S_n $ of the symbols $x_1,...,x_n$ as in \cite{PCIP}  and \cite[Lemma 2.1]{amdeberhan2025tracespartitioneisensteinseries}.
\begin{lemma}\label{lem:PCI} 
\textit{It holds that}
\begin{align*}
    \sum_{n =0}^\infty \sum_{\lambda \vdash n} \prod_{j=1}^n \frac{x_j^{\ell_j}}{\ell_j!}w^n= \exp \left( \sum_{j=1}^\infty x_j w^j \right).
\end{align*}
\end{lemma}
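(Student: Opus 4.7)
The plan is to prove the identity by expanding the right-hand side and collecting terms by total weight $n=\sum_j j\ell_j$. This is the standard combinatorial proof of a Pólya/exponential-formula identity, and it avoids any analytic subtleties since both sides are formal power series.

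The cleanest route is to factor the exponential as an infinite product. I would start from
\begin{align*}
\exp\!\left(\sum_{j=1}^\infty x_j w^j\right) = \prod_{j=1}^\infty \exp\!\left(x_j w^j\right) = \prod_{j=1}^\infty \sum_{\ell_j=0}^\infty \frac{x_j^{\ell_j} w^{j\ell_j}}{\ell_j!},
\end{align*}
which is valid as a formal power series in $w$ since each factor has constant term $1$ and only finitely many factors contribute to any fixed power of $w$. Expanding the infinite product as a sum over tuples $(\ell_1,\ell_2,\ldots)$ of nonnegative integers with only finitely many nonzero entries yields
\begin{align*}
\exp\!\left(\sum_{j=1}^\infty x_j w^j\right) = \sum_{(\ell_1,\ell_2,\ldots)} \left(\prod_{j=1}^\infty \frac{x_j^{\ell_j}}{\ell_j!}\right) w^{\sum_{j\geq 1} j\ell_j}.
\end{align*}

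The final step is to reorganize this sum by the value $n:=\sum_{j\geq 1} j\ell_j$. A tuple $(\ell_1,\ell_2,\ldots)$ with $\sum_j j\ell_j = n$ corresponds bijectively to a partition $\lambda\vdash n$ written in multiplicity notation $\lambda=(1^{\ell_1},2^{\ell_2},\ldots,n^{\ell_n})$, where $\ell_j=0$ for all $j>n$ automatically. Grouping accordingly gives
\begin{align*}
\exp\!\left(\sum_{j=1}^\infty x_j w^j\right) = \sum_{n=0}^\infty w^n \sum_{\lambda \vdash n} \prod_{j=1}^n \frac{x_j^{\ell_j}}{\ell_j!},
\end{align*}
which is the claim. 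The $n=0$ case corresponds to the empty partition and contributes $1$, matching the constant term of the exponential.

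There is no real obstacle here; the only thing to be careful about is the formal-power-series justification for manipulating the infinite product and the double sum, which is standard because for any fixed $n$ only tuples with $\ell_j=0$ for $j>n$ and $\ell_j\leq n$ contribute to the coefficient of $w^n$. An equivalent argument, if one prefers, uses the multinomial expansion of $\bigl(\sum_j x_j w^j\bigr)^k/k!$ and sums over $k$; both routes recover the identity with the same bookkeeping.
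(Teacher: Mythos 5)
Your proof is correct and is the standard argument: the paper itself gives no proof of this lemma, simply citing it as a known identity on P\'olya cycle index polynomials (Stanley--Fomin and Lemma 2.1 of Amdeberhan--Griffin--Ono--Singh), and your factorization of the exponential into $\prod_j \exp(x_j w^j)$, expansion, and regrouping by $n=\sum_j j\ell_j$ via the multiplicity-notation bijection with partitions is exactly the expected justification. Nothing is missing; the formal-power-series bookkeeping you note (only finitely many factors contribute to each power of $w$) is the only point requiring care, and you handle it.
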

\subsection{A rank-crank type PDE} In \cite[Theorem 1.1]{atkin2002relationsrankscrankspartitions} Atkin and Garvan showed that the crank and rank generating functions are linked by a partial differential equation. Zwegers \cite{zwegers} then showed, that similar partial differential equations, so called rank-crank type partial differential equations, hold for $A_\ell(z,\tau).$ Namely \cite[Theorem 1.5]{zwegers} for any odd integer $\ell \geq 3$, there exist holomorphic modular forms $f_j$ of weight $j=4,6,8,...,\ell-1,$ such that 
\begin{align*}
    \left(\mathcal{H}^{\frac{\ell-1}{2}}+\sum_{j=0}^{\frac{\ell-5}{2}} f_{\ell-2j-1} \mathcal{H}^j  \right)A_{\ell}(z,\tau)=  (\ell-1)!\left( \frac{(q)_\infty}{-2i \sin(\pi z)}C(\zeta,q) \right)^\ell, \tag{2.12}
\end{align*}
with \begin{align*}
    \mathcal{H}_k:= \frac{\ell}{\pi i} \frac{\partial}{\partial \tau} + \frac{1}{(2\pi i)^2}\frac{\partial^2}{\partial z^2}+2\ell(2k-1) G_2(\tau)
\end{align*}
and
\begin{align*}
    \mathcal{H}^k:= \mathcal{H}_{2k-1} \mathcal{H}_{2k-3} \dots \mathcal{H}_3 \mathcal{H}_1.
\end{align*}
For our purposes, we will  mostly be interested in the case $\ell=5,$ for which the PDE reduces to \cite[Section 3]{zwegers}:
\begin{align*}
    \left( \mathcal{H}_3 \mathcal{H}_1 -\frac{220}{3} G_4(\tau)\right) A_5(z,\tau)  &=24   \left( \frac{(q)_\infty}{-2i \sin(\pi z)} C(\zeta,q) \right)^5 
    \\&= \frac{3i}{4\pi^5z^5} \exp\left(10 \sum_{k= 2}^\infty G_k(\tau)\frac{(2\pi i z)^k}{k!} \right).\tag{2.13}
\end{align*}

\section{Mock Eisenstein series from $k$-rank moments}
\subsection{Completion of $f_{k,j}$ and their modularity.} Following the ideas in Section 3 of \cite{bringmann2025mockeisensteinseriesassociated} we define the family $f_k=\{f_{k,j}\}_{j\in \mathbb{N}}$ using the holomorphic part calculated in Lemma \ref{lem:lim}:
\begin{align*}
    FG_k(\zeta,q) + \frac{\theta_{1,2k-1}(\tau)}{(q)_\infty} =: \frac{\sin(\pi z)}{\pi z(q)_\infty} \exp \left(2\sum_{j=1}^\infty f_{k,j}(\tau) \frac{(2\pi i z)^j}{j!} \right). \tag{3.1}
\end{align*}
We point out that the authors of \cite{bringmann2025mockeisensteinseriesassociated} had 1 in the place of $\tfrac{\theta_{1,2k-1}(\tau)}{(q)_\infty}$ for the case $k=2$ in their paper. Indeed, when  $k=2$ this quotient simplifies to $1.$

From this definition, we can already prove part (1) of Theorem \ref{thm:main}: 
\begin{proof}[Proof of Theorem \ref{thm:main} (1)]
Note, that the left-hand side of (3.1) is invariant under $\zeta \mapsto \zeta^{-1}$  (which corresponds to $z \mapsto -z,$) so $f_{k,j}\equiv 0$ for all odd $j.$ This can be seen from the definition of $FG_k(\zeta,q)$ and the fact that $\tfrac{\theta_{1,2k-1}(\tau)}{(q)_\infty}$ does not depend on $\zeta$ (or $z$). 

Regarding the the limiting behavior of $f_{k,j},$ we calculate
\begin{align*}
    &\frac{1}{2 \pi i z}\exp\left( 2 \sum_{j= 2 }^\infty f_{k,j}(\tau) \frac{(2\pi i z)^j}{j!} \right) =\frac{(q)_\infty FG_k(\zeta,q)+ \theta_{1,2k-1}(\tau)}{2i \sin(\pi z)} 
    \\ &\hspace{0.25cm}= \frac{\zeta^{\frac{1}{2}}}{\zeta-1} \left[ (q)_\infty FG_k(\zeta,q)+ \theta_{1,2k-1}(\tau)  \right] \xrightarrow{\tau \rightarrow i\infty} \frac{\zeta^{\frac{1}{2}}}{\zeta-1},
\end{align*}
because
 \begin{align*}
     \lim_{\tau \rightarrow i\infty} FG_k(\zeta,q)=0 \text{ and } \lim_{\tau \rightarrow i\infty} \theta_{1,2k-1}(\tau)=1.
 \end{align*} 
 Now using Lemma \ref{lem:bern} in our previous calculation gives the claim. 
 \end{proof}
 
Again following the ideas of Bringmann, Pandey and van Ittersum in \cite{bringmann2025mockeisensteinseriesassociated}, we define the function
\begin{align*}
    K^{\circ}_k(z,\overline{z},\tau,\overline{\tau}) &:=-2 i \sin(\pi z)q^\frac{1}{24}K_k(z,\tau)e^{-4(2k-1)\pi^2G_2(\tau)z^2} =\frac{-2i \sin(\pi z)}{(q)_\infty} \widehat{A}_{2k-1}(z,\tau) \\
    &\,=FG_k(\zeta,q)- \frac{2i\sin(\pi z)}{(q)_\infty}H_k(z,\tau) \\&\hspace{1cm}+ \frac{\zeta}{(q)_\infty} \theta_{1,2k-1}(\tau) -  \frac{\zeta(1-\zeta)}{(q)_\infty} \sum_{m=0}^{k-3} \zeta^m \theta_{2m+3,2k-1}(\tau)
\end{align*}
 Utilizing Lemma \ref{lem:limitH}, we have
\begin{align*}
    \lim_{\overline{\tau}\rightarrow -i\infty}  K^{\circ}_k(z,\overline{z},\tau,\overline{\tau})=FG_k(\zeta,q) + \frac{\theta_{1,2k-1}(\tau)}{(q)_\infty}.
\end{align*}

Next, we use this function, in order to define the quasi-completions $f^*_{k,j}$ of $f_{k,j} $ via:
\begin{align*}
    \frac{(q)_\infty K^\circ_k(z,\overline{z},\tau,\overline{\tau})}{2i \sin(\pi z)}=: \frac{1}{2\pi i z} \exp\left(2 \sum_{j,\ell \geq 0} f^*_{k,j,\ell}(\tau,\overline{\tau}) \frac{(2\pi i z)^j}{j!}\frac{(2\pi i \overline{z})^\ell}{\ell!}\right). \tag{3.2}
\end{align*}
Furthermore, we denote by $K^\text{*}_k(z,\tau,\overline{\tau})$ the constant term in the Taylor expansion in $\overline{z}$ of $K^\circ_k(z,\overline{z},\tau,\overline{\tau})$ and
\begin{align*}
    f_{k,j}^*(\tau):=f_{k,j,0}^*(\tau,\overline{\tau}).
\end{align*}
With these definitions, we have
\begin{align*}
    \mathbb{F}_k(z,\tau):= \frac{(q)_\infty K^\text{*}_k(z,\tau,\overline{\tau})}{2i \sin(\pi z)}=\frac{1}{2\pi i z} \exp \left( 2\sum_{j= 0}^\infty f_{k,j}^*(\tau) \frac{(2\pi i z)^j}{j!} \right). \tag{3.3}
\end{align*}
With this, $f_{k,j}^*(\tau)\equiv 0 $ if $j $ is odd. Having stated these definitions, we can prove part (2) of Theorem \ref{thm:main}. To do so, we will prove the following lemma:
\begin{lemma}\label{lem:comp}
\textit{For all} $ \left( {a\atop c} {b\atop d} \right)\in \text{SL}_2(\mathbb{Z}),$ \textit{it holds that 
\begin{align*}
    f^*_{k,j,\ell}\left(\frac{a\tau+b}{c\tau+d},\frac{a\overline{\tau}+b}{c\overline{\tau}+d}\right) =
    \begin{cases}
        (c\tau+d)^j(c\overline{\tau}+d)^{\ell}f^*_{k,j,\ell}(\tau,\overline{\tau}) 
        & \text{if } (j,\ell) \neq (2,0),
        \\ (c\tau+d)^2 f^*_{k,2,0} (\tau,\overline{\tau}) +\frac{(2k-1)ic}{4\pi}(c\tau+d)
        & \text{if } (j,\ell)=(2,0).
    \end{cases}
\end{align*} }
\end{lemma}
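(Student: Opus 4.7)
The plan is to derive the transformation law by combining the Jacobi-type transformation of $\eta(\tau)K_k(z,\tau)$ from Lemma~\ref{lem:trans} with the quasimodular anomaly of $G_2$ from (2.1), and then extracting coefficients in the generating function (3.2).

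First I would unpack the definition of $K^{\circ}_k$. Using
$K^{\circ}_k = -2i\sin(\pi z)q^{1/24}K_k(z,\tau)e^{-4(2k-1)\pi^2 G_2(\tau)z^2}$
together with $\eta(\tau)=q^{1/24}(q)_\infty$, I obtain the clean identity
\begin{align*}
\frac{(q)_\infty K^{\circ}_k(z,\overline{z},\tau,\overline{\tau})}{2i\sin(\pi z)} = -\eta(\tau)\,K_k(z,\tau)\,e^{-4(2k-1)\pi^2 G_2(\tau)z^2}.
\end{align*}
Setting $\tau':=\frac{a\tau+b}{c\tau+d}$, $\overline{\tau}':=\frac{a\overline{\tau}+b}{c\overline{\tau}+d}$, $z':=\frac{z}{c\tau+d}$, $\overline{z}':=\frac{\overline{z}}{c\overline{\tau}+d}$, Lemma~\ref{lem:trans} supplies the factor $(c\tau+d)$ from $\eta K_k$, while (2.1) gives $G_2(\tau')(z')^2=G_2(\tau)z^2+\tfrac{icz^2}{4\pi(c\tau+d)}$. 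The exponentiated $G_2$-correction therefore contributes an anomalous factor $e^{-(2k-1)\pi icz^2/(c\tau+d)}$, so altogether
\begin{align*}
\frac{(q)_\infty K^{\circ}_k(z',\overline{z}',\tau',\overline{\tau}')}{2i\sin(\pi z')} = (c\tau+d)\,e^{-(2k-1)\pi icz^2/(c\tau+d)}\,\frac{(q)_\infty K^{\circ}_k(z,\overline{z},\tau,\overline{\tau})}{2i\sin(\pi z)}.
\end{align*}

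Next I would substitute the generating-function definition (3.2) into both sides. The identity $\frac{1}{2\pi iz'}=\frac{c\tau+d}{2\pi iz}$ absorbs the prefactor $(c\tau+d)$, and after taking logarithms I reduce everything to the equality of formal power series in $z,\overline{z}$
\begin{align*}
2\sum_{j,\ell\geq 0}\frac{f^{*}_{k,j,\ell}(\tau',\overline{\tau}')}{(c\tau+d)^j(c\overline{\tau}+d)^{\ell}}\frac{(2\pi iz)^j(2\pi i\overline{z})^{\ell}}{j!\,\ell!} = -\frac{(2k-1)\pi icz^2}{c\tau+d}+2\sum_{j,\ell\geq 0}f^{*}_{k,j,\ell}(\tau,\overline{\tau})\frac{(2\pi iz)^j(2\pi i\overline{z})^{\ell}}{j!\,\ell!}.
\end{align*}

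Finally I would compare coefficients of $z^j\overline{z}^{\ell}$. Because the anomalous contribution $-(2k-1)\pi icz^2/(c\tau+d)$ is a pure $z^2$ monomial with no $\overline{z}$ dependence, matching coefficients produces the clean weight $(j,\ell)$ transformation $f^{*}_{k,j,\ell}(\tau',\overline{\tau}')=(c\tau+d)^j(c\overline{\tau}+d)^{\ell}f^{*}_{k,j,\ell}(\tau,\overline{\tau})$ for every $(j,\ell)\neq(2,0)$, while at $(j,\ell)=(2,0)$ an elementary rearrangement of the $z^2$-coefficients yields the extra shift $\tfrac{(2k-1)ic(c\tau+d)}{4\pi}$ claimed in the lemma. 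The only real subtlety is accurately tracking the $G_2$-shift, since it is what forces the single anomalous slot; once that is in hand the rest is formal-series bookkeeping.
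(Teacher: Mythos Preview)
Your proof is correct and follows essentially the same route as the paper: both arguments rewrite the generating function (3.2) as $\pm\eta(\tau)K_k(z,\tau)e^{-4(2k-1)\pi^2G_2(\tau)z^2}$, apply Lemma~\ref{lem:trans} together with the $G_2$-anomaly (2.1), and then compare coefficients of $z^j\overline{z}^{\ell}$. Your presentation is slightly more explicit in isolating the logarithmic step and the single anomalous $z^2$ monomial, but the content is the same.
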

\begin{proof} By our previous definition of $f_{k,j,\ell}^*$ and Lemma \ref{lem:trans}, we have
\begin{align*}
    &\frac{c\tau+d}{2\pi i z} \exp \left( 2 \sum_{j,\ell \geq 0} f_{k,j,\ell}^* \left( \frac{a\tau+b}{c\tau+d},\frac{a\overline{\tau}+b}{c\overline{\tau}+d}\right) \frac{\left(\frac{2\pi i z}{c\tau+d} \right)^j}{j!}  \frac{\left(\frac{2\pi i \overline{z}}{c\overline{\tau}+d}\right)^\ell}{\ell!}\right)
    \\  &\hspace{0.5cm}=\eta \left(\frac{a\tau+b}{c\tau+d} \right) K_k\left(\frac{z}{c\tau+d} , \frac{a\tau+b}{c\tau+d} \right) e^{-4(2k-1) \pi^2 \left( \frac{z}{c\tau+d}\right)^2 G_2\left( \frac{a\tau+b}{c\tau+d}\right) }
    \\ &\hspace{0.5cm}= (c\tau+d) \eta(\tau) K_k(z,\tau) e^{-4(2k-1) \pi^2 z^2 \left(G_2(\tau) + \frac{ic}{4\pi (c\tau+d)} \right) } \\
    &\hspace{0.5cm}=\frac{c\tau+d}{2\pi i z} \exp\left( 2 \left( \frac{(2k-1)ic}{4\pi (c\tau+d)} \frac{(2\pi i z)^2}{2!}+\sum_{j,\ell \geq 0} f_{k,j,\ell}^*(\tau,\overline{\tau}) \frac{(2\pi i z)^j}{j!} \frac{(2\pi i \overline{z})^\ell}{\ell!}\right) \right).
\end{align*}
Which establishes the transformation formula for $f_{k,j,\ell}^*.$
\end{proof}
By the definition of the $f_{k,j}^*, $ Lemma \ref{lem:comp} implies the transformation formula in Theorem \ref{thm:main} (2).  In order to complete the proof of Theorem \ref{thm:main} (2), we have to verify the following lemma:
\begin{lemma}\label{lem:limitf}
\textit{It holds that}
\begin{align*}
    \lim_{\overline{\tau} \to -i\infty} f_{k,j}^*(\tau)=f_{k,j}(\tau).
\end{align*}
\end{lemma}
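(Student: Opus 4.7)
The plan is to take $\overline{\tau} \to -i\infty$ on both sides of the defining relation (3.3), identify the limit of the left-hand side via Lemma \ref{lem:lim} with the right-hand side of (3.1), and extract the claim by matching coefficients of the two exponential generating series.

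The key computation is $\lim_{\overline{\tau} \to -i\infty} K^*_k(z, \tau, \overline{\tau})$. Because $E(y) = 2\int_0^y e^{-\pi t^2}dt$ is entire, $R$ is real-analytic in $\overline{z}$, and hence so is $K^\circ_k$; the constant term of its $\overline{z}$-Taylor expansion therefore equals the value at $\overline{z}=0$, i.e.\ $K^*_k(z, \tau, \overline{\tau}) = K^\circ_k(z, 0, \tau, \overline{\tau})$. Lemma \ref{lem:lim} gives
\begin{align*}
\lim_{\overline{\tau} \to -i\infty} K^\circ_k(z, \overline{z}, \tau, \overline{\tau}) = FG_k(\zeta, q) + \frac{\theta_{1, 2k-1}(\tau)}{(q)_\infty},
\end{align*}
a right-hand side manifestly independent of $\overline{z}$: the only $\overline{z}$-dependence of $K^\circ_k$ sits inside the $R$-functions of $H_k$ via the term $(z-\overline{z})/(\tau-\overline{\tau})$ in the argument of $E$, which vanishes as $\overline{\tau} \to -i\infty$, and Lemma \ref{lem:limitR} confirms that each such $R$-term has an $\overline{z}$-free limit. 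Specializing $\overline{z}=0$ therefore yields
\begin{align*}
\lim_{\overline{\tau} \to -i\infty} K^*_k(z, \tau, \overline{\tau}) = FG_k(\zeta, q) + \frac{\theta_{1, 2k-1}(\tau)}{(q)_\infty}.
\end{align*}

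Substituting this into (3.3) and applying (3.1) to the right-hand side produces
\begin{align*}
\lim_{\overline{\tau} \to -i\infty} \frac{1}{2\pi i z} \exp\!\Bigl(2 \sum_{j \geq 0} f^*_{k,j}(\tau) \frac{(2\pi i z)^j}{j!}\Bigr) = \frac{1}{2\pi i z} \exp\!\Bigl(2 \sum_{j \geq 1} f_{k,j}(\tau) \frac{(2\pi i z)^j}{j!}\Bigr).
\end{align*}
Taking logarithms and comparing coefficients of $z^j$ gives $\lim_{\overline{\tau} \to -i\infty} f^*_{k,j}(\tau) = f_{k,j}(\tau)$ for $j \geq 1$ (together with $\lim_{\overline{\tau} \to -i\infty} f^*_{k,0}(\tau) = 0$). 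The only delicate point is the commutation of the $\overline{\tau}$-limit with the extraction of the $\overline{z}^0$-coefficient, and this is exactly where the $\overline{z}$-independence of the limit in Lemma \ref{lem:lim} observed above is used; no additional uniformity argument is needed because setting $\overline{z}=0$ in $K^\circ_k$ commutes with the $\overline{\tau}$-limit by virtue of that independence.
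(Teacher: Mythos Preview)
Your proof is correct and follows essentially the same approach as the paper: both take the $\overline{\tau}\to -i\infty$ limit, invoke Lemma~\ref{lem:lim} to identify the result with the right-hand side of (3.1), and then read off the claim by matching coefficients in the exponential generating series. The only organizational difference is that the paper works with the full bivariate series (3.2) in $z$ and $\overline{z}$ and compares coefficients there, whereas you first specialize to $\overline{z}=0$ via $K^*_k$ and (3.3); your version is slightly more explicit about why the $\overline{z}$-constant-term extraction commutes with the limit, which the paper leaves implicit.
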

\begin{proof} Once again by the definitions of $f_{k,j,\ell}^*$ and $f_{k,j}^*,$ we have
\begin{align*}
    &\hphantom{=}\frac{1}{2\pi i z} \lim_{\overline{\tau} \rightarrow -i\infty} \exp\left( 2 \sum_{j,\ell \geq 0} f_{k,j,\ell}^*(\tau,\overline{\tau})\frac{(2\pi i z)^j}{j!} \frac{(2\pi i \overline{z})^\ell}{\ell!} \right) 
    \\ &= \frac{(q)_\infty}{2 i \sin(\pi z)} \lim_{\overline{\tau} \rightarrow -i\infty}  K^\circ_k(z,\overline{z},\tau, \overline{\tau})
    = \frac{(q)_\infty}{2i\sin(\pi z)} FG_k(\zeta,q) + \frac{\theta_{1,2k-1}(\tau)}{2i\sin(\pi z)}
    \\&=\frac{1}{2\pi i z} \exp\left(2 \sum_{j=0}^\infty f_{k,j}(\tau) \frac{(2\pi i z)^j}{j!}\right).
\end{align*}
By continuity of the exponential function, comparing the coefficients yields the desired result. 
\end{proof}

Lastly, we define a family $\widehat{f}_k:=\{\widehat{f}_{k,j}\}_{j\in \mathbb{N}}$ via
\begin{align*}
    \widehat{\mathbb{F}}_k(z,\tau):= \frac{(q)_\infty K^\text{*}_k(z,\tau,\overline{\tau})e^{-\frac{(2k-1) \pi z^2}{2v}}}{2i \sin(\pi z)} =: \frac{1}{2\pi i z} \exp \left( 2 \sum_{j =1}^\infty \hat{f}_{k,j}(\tau) \frac{(2\pi i z)^j}{j!}\right). \tag{3.4}
\end{align*}
\begin{lemma}
\textit{The $\widehat{f}_{k,j}$ satisfy:
\begin{align*}
    \widehat{f}_{k,j}(\tau)=\begin{cases}
        f^*_{k,j}(\tau) & \text{ if }j\neq 2 \\
        f_{k,2}^*(\tau)+ \frac{2k-1}{8\pi v} & \text{ if } j=2 \tag{3.5}
    \end{cases}
\end{align*}
and for all} $\left({a \atop c} {b \atop d} \right)\in \text{SL}_2(\mathbb{Z}),$ \textit{we have
\begin{align*}
    \widehat{f}_{k,j}\left( \frac{a\tau+b}{c\tau+d} \right) = (c\tau+d)^j \widehat{f}_{k,j}(\tau).
\end{align*} }
\end{lemma}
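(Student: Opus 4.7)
The plan is to prove the two parts of the lemma in sequence by directly comparing the defining generating functions in (3.3) and (3.4). The key observation is that these differ only by a multiplicative factor, namely $\widehat{\mathbb{F}}_k(z,\tau) = \mathbb{F}_k(z,\tau)\cdot e^{-\frac{(2k-1)\pi z^2}{2v}}$. Taking logarithms and rearranging yields the formal identity
\begin{align*}
2\sum_{j=1}^\infty \widehat{f}_{k,j}(\tau)\frac{(2\pi iz)^j}{j!} = 2\sum_{j=0}^\infty f^*_{k,j}(\tau)\frac{(2\pi iz)^j}{j!} - \frac{(2k-1)\pi z^2}{2v}.
\end{align*}

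For (3.5), I would compare the coefficient of $z^j$ on both sides. Since the correction term $-\frac{(2k-1)\pi z^2}{2v}$ is purely quadratic in $z$, it only affects the $j=2$ coefficient. For $j\neq 2$ this immediately gives $\widehat{f}_{k,j} = f^*_{k,j}$ (and in particular $f^*_{k,0} \equiv 0$ by matching the $z^0$ term). For $j=2$ the comparison reads $-4\pi^2 \widehat{f}_{k,2} = -4\pi^2 f^*_{k,2} - \frac{(2k-1)\pi}{2v}$, which rearranges to $\widehat{f}_{k,2}(\tau) = f^*_{k,2}(\tau) + \frac{2k-1}{8\pi v}$, as claimed.

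For the modularity statement, the case $j\neq 2$ is immediate from (3.5) combined with Lemma \ref{lem:comp}, which already provides weight-$j$ modularity of $f^*_{k,j}$. For $j=2$, I would combine the quasi-modular transformation of $f^*_{k,2}$ from Lemma \ref{lem:comp} with the standard transformation $v \mapsto \frac{v}{(c\tau+d)(c\overline{\tau}+d)}$, easily derived from $v = \frac{\tau-\overline{\tau}}{2i}$ with $\tau$ and $\overline{\tau}$ treated as independent. The only remaining step of any substance is the algebraic identity $(c\tau+d)^2 - (c\tau+d)(c\overline{\tau}+d) = c(c\tau+d)(\tau-\overline{\tau}) = 2icv(c\tau+d)$, which shows that the anomalous term $\frac{(2k-1)ic}{4\pi}(c\tau+d)$ inherited from Lemma \ref{lem:comp} combines with $\frac{(2k-1)(c\tau+d)(c\overline{\tau}+d)}{8\pi v}$ to produce exactly $\frac{(2k-1)(c\tau+d)^2}{8\pi v}$, giving the required weight-two transformation $\widehat{f}_{k,2}(\gamma\tau) = (c\tau+d)^2 \widehat{f}_{k,2}(\tau)$. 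This cancellation is no accident: the non-holomorphic completion $\frac{2k-1}{8\pi v}$ is tailor-made to absorb the quasi-modular anomaly of $f^*_{k,2}$, so I do not expect any real obstacle here; the lemma is essentially bookkeeping with Taylor expansions and elementary modular transformations.
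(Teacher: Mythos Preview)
Your proposal is correct and follows essentially the same approach as the paper, which simply says ``arguing as in the proof of Lemma~\ref{lem:limitf} yields the claim'': both amount to comparing the defining generating functions (3.3) and (3.4), reading off the coefficients, and for the modularity invoking the transformation of $f^*_{k,j}$ from Lemma~\ref{lem:comp} together with the standard transformation of $\tfrac{1}{v}$. You have merely made explicit the details the paper leaves implicit.
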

\begin{proof} Arguing as in the proof of Lemma \ref{lem:limitf} yields the claim.
\end{proof}
\subsection{Limiting behavior of $f_{k,j}^*$ and $\widehat{f}_{k,j}$.} We start by studying the limiting behavior of the $\theta$-function, as $\tau$ approaches $i \infty.$ 
\begin{lemma}\label{lem:thetaO}
\textit{We have for $k\geq 3$ and $\ell=0,...,2k-2$
\begin{align*}
    \theta(\ell\tau,(2k-1)\tau)= O\left(e^{\pi i \tau \left(\frac{2k-1}{4} -\ell\right)} \right)
\end{align*}
as $v \to \infty.$}
\end{lemma}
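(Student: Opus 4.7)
The plan is to read off the leading asymptotic of $\theta(\ell\tau,(2k-1)\tau)$ directly from its definition (2.8). Substituting $z = \ell\tau$ and $\tau \mapsto (2k-1)\tau$ in (2.8) and writing $v = n + \tfrac{1}{2}$ with $n \in \mathbb{Z}$, one gets (using $e^{\pi i(n+1/2)} = i(-1)^n$)
\begin{align*}
\theta(\ell\tau,(2k-1)\tau) = i\sum_{n\in\mathbb{Z}}(-1)^n e^{\pi i\tau\, Q_\ell(n)}, \qquad Q_\ell(n):=(2k-1)\bigl(n+\tfrac{1}{2}\bigr)^2+2\ell\bigl(n+\tfrac{1}{2}\bigr).
\end{align*}
Because $|e^{\pi i \tau\,a}| = e^{-\pi v a}$, the asymptotic as $v\to\infty$ is governed by the summand(s) minimizing $Q_\ell$ over $\mathbb{Z}$, so my task reduces to identifying this minimum.

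Next I compute the minimum. Completing the square yields $Q_\ell(n) = (2k-1)\bigl(n+\tfrac{1}{2}+\tfrac{\ell}{2k-1}\bigr)^2 - \tfrac{\ell^2}{2k-1}$, so over the reals the minimum lies at $v = -\tfrac{\ell}{2k-1}\in[-1,0]$. The half-integer candidate is therefore $v=-\tfrac12$, i.e.\ $n=-1$, giving $Q_\ell(-1)=\tfrac{2k-1}{4}-\ell$. The key algebraic identity I would use is
\begin{align*}
Q_\ell(n)-Q_\ell(-1) = (n+1)\bigl[(2k-1)n + 2\ell\bigr].
\end{align*}
For $0\le \ell\le 2k-2$, a short case check on the sign of each factor (splitting on $n\ge 0$, $n=-1$, $n\le -2$) shows both factors always have the same sign or vanish, so $Q_\ell(n)\ge Q_\ell(-1)$ for all $n\in\mathbb{Z}$.

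Finally I extract the tail estimate. The right-hand side of the identity above is a product of two integers of the same sign, so $Q_\ell(n)-Q_\ell(-1)$ is a nonnegative integer, and running through small cases shows it is $\ge 2$ for every $n$ not achieving the minimum, except for the degenerate case $\ell=0$, where $n=0$ also minimizes and the two leading terms cancel via $(-1)^0+(-1)^{-1}=0$, making $\theta(0,(2k-1)\tau)\equiv 0$. Consequently
\begin{align*}
\theta(\ell\tau,(2k-1)\tau) = -i\,e^{\pi i \tau(\frac{2k-1}{4}-\ell)} + O\bigl(e^{\pi i\tau(\frac{2k-1}{4}-\ell+2)}\bigr) \quad (v\to\infty),
\end{align*}
which implies the claim. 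There is no real obstacle here; this is a routine stationary-phase-type estimate on a theta series, and the only care needed is the uniform verification of $Q_\ell(n)\ge Q_\ell(-1)$ across the full range $0\le \ell\le 2k-2$.
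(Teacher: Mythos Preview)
Your proposal is correct and follows essentially the same approach as the paper: both rewrite $\theta(\ell\tau,(2k-1)\tau)$ as $i\sum_{n\in\mathbb{Z}}(-1)^n e^{\pi i\tau Q_\ell(n)}$, locate the real minimizer of the quadratic exponent at $n+\tfrac{1}{2}=-\tfrac{\ell}{2k-1}$, and conclude the integer minimum is at $n=-1$ (with the tie at $n=0$ when $\ell=0$), yielding the leading exponent $\tfrac{2k-1}{4}-\ell$. Your factorization $Q_\ell(n)-Q_\ell(-1)=(n+1)\bigl[(2k-1)n+2\ell\bigr]$ and the resulting gap bound make the tail control more explicit than the paper's version, and you also note the cancellation $\theta(0,(2k-1)\tau)\equiv 0$; one small cosmetic point is that you reuse the letter $v$ for both $\mathrm{Im}(\tau)$ and the half-integer summation index, which could be cleaned up.
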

\begin{proof} We set $d:=2k-1$ and with this, have
\begin{align*}
    \theta(\ell\tau,d\tau)= i \sum_{n \in \mathbb{Z}} (-1)^ne^{\pi i \tau \left(dn^2+(d+2\ell)n+ \left( \frac{d+4\ell}{4} \right)\right)}.
\end{align*}
In order to find the dominant term, we have to find the minimal value the quadratic in the exponential attains on $\mathbb{Z}.$ By analytical methods, we find that the minimal value of $dn^2+(d+2\ell)n+\frac{(d+4\ell)}{4}$ on $\mathbb{R}$ is attained at $-\frac{\ell}{d}-\frac{1}{2}.$ Since quadratics are symmetric with respect to their vertex, the minimal value is attained at the closest integer to this point. Since $\ell$ ranges from $0$ to $d-1,$ this is always $n=-1$ (and also $n=0$ if and only if $\ell=0).$ This gives 
\begin{align*}
    d(-1)^2 + (d+2\ell)(-1) + \frac{d+4\ell}{4} = \frac{d}{4} -\ell.
\end{align*}
The corresponding term with this exponent is given by $ -i e^{\pi i \tau  \left( \frac{d}{4} -\ell\right)}.$
\end{proof}
Using this lemma, we can prove the following lemma:
\begin{lemma}\label{lem:limitHk}
\textit{We have for $k \geq 3,$ that} 
\begin{align*}
  \left(\zeta^{-\frac{1}{2}}-\zeta^{\frac{1}{2}} \right) \lim_{\tau \to i\infty} H_k(z,\tau)= 1- \zeta^{k-1}.
\end{align*}
\end{lemma}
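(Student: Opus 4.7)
The plan is to reduce, via Lemma \ref{lem:limitH}, to a sum of ordinary theta series in $\tau$, and then to evaluate the limit at the cusp by direct inspection of each summand.

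First, I would apply Lemma \ref{lem:limitH} to write
\begin{align*}
\left(\zeta^{-\frac{1}{2}}-\zeta^{\frac{1}{2}}\right)\lim_{\overline{\tau}\to-i\infty} H_k(z,\tau) = (1-\zeta)\,\theta_{1,2k-1}(\tau) + \zeta(1-\zeta)\sum_{\ell=0}^{k-3}\zeta^\ell\,\theta_{2\ell+3,2k-1}(\tau).
\end{align*}
Since the right-hand side is purely holomorphic in $\tau$, the outstanding limit $\tau\to i\infty$ is unambiguous, and it suffices to compute it for this holomorphic expression.

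Next, I would check that $\theta_{a,2k-1}(\tau)\to 1$ as $\tau\to i\infty$ for every odd $a$ with $1\le a\le 2k-3$. By (1.5),
\begin{align*}
\theta_{a,2k-1}(\tau) = \sum_{n\in\mathbb{Z}}(-1)^n q^{\frac{n((2k-1)n+a)}{2}}.
\end{align*}
For $n\ge 1$ the quantity $(2k-1)n+a$ is manifestly positive, while for $n\le -1$ we have $(2k-1)n+a\le -(2k-1)+a\le -2$ since $a\le 2k-3$. Thus $n((2k-1)n+a)>0$ for all $n\ne 0$, so every non-constant term carries a strictly positive power of $q$ and vanishes as $\tau\to i\infty$; only the $n=0$ term survives, yielding $1$.

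Substituting these limits and performing the resulting geometric sum yields
\begin{align*}
(1-\zeta) + \zeta(1-\zeta)\sum_{\ell=0}^{k-3}\zeta^\ell = (1-\zeta) + \zeta\left(1-\zeta^{k-2}\right) = 1 - \zeta^{k-1},
\end{align*}
which is the claim. The only mild point of caution is the sequencing of the two limits $\overline{\tau}\to -i\infty$ and $\tau\to i\infty$, but since Lemma \ref{lem:limitH} already produces a purely holomorphic $\tau$-expression this presents no real obstacle; the proof is essentially an assembly of Lemma \ref{lem:limitH} with an elementary theta evaluation and a geometric sum.
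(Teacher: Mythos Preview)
Your approach differs from the paper's and contains a genuine gap at exactly the point you flag as a ``mild point of caution.'' Lemma~\ref{lem:limitH} computes the limit $\overline{\tau}\to -i\infty$ (in the paper's convention of independent $\tau$ and $\overline{\tau}$ this extracts the holomorphic part), whereas Lemma~\ref{lem:limitHk} asks for the ordinary limit $\tau\to i\infty$, in which $v=\mathrm{Im}(\tau)\to\infty$ and $q\to 0$ are coupled. Your argument silently replaces $\lim_{\tau\to i\infty} H_k$ by $\lim_{\tau\to i\infty}\lim_{\overline{\tau}\to -i\infty} H_k$, and the fact that the inner limit happens to be holomorphic in $\tau$ says nothing about whether this interchange is legitimate. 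Concretely, the $R$-summands in $H_k$ carry arbitrarily negative powers of $q$ (from $q^{-dn^2/2+\ell n}$), compensated only by the non-holomorphic factor $\mathrm{sgn}(n)-E(\cdots)$; one must check that the non-holomorphic remainder $H_k-\lim_{\overline{\tau}\to -i\infty}H_k$ actually tends to $0$ as $\tau\to i\infty$, and this is precisely the work you have skipped.

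The paper does not attempt such an interchange. It splits $R((2k-1)z-\ell\tau,(2k-1)\tau)$ into its $\mathrm{sgn}$-part and its $\beta$-part, uses the bound $\beta(m)\le e^{-\pi m}$ together with the growth estimate for $\theta(\ell\tau,(2k-1)\tau)$ from Lemma~\ref{lem:thetaO} to kill the $\beta$-contribution to $H_k$ as $\tau\to i\infty$, and then matches the surviving $\mathrm{sgn}$-terms against the $\theta$-asymptotics to obtain $-2i\zeta^{-(2k-1)/2}$ for each $\ell=k,\dots,2k-2$. Those decay estimates are exactly what would be required to justify your interchange of limits, so your route can be repaired by invoking them; but as written the proof is incomplete, and the computation of the $\theta_{a,2k-1}$ limits together with the geometric sum, while correct, is not the substantive part of the argument.
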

\begin{proof} We again set $d:=2k-1$ and get that $ R(dz-\ell\tau,d\tau)$ is equal to
\begin{align*}
   &\sum_{n \in \mathbb{Z}+\frac{1}{2}} \left[\text{sgn}(n)-\text{sgn}\left(\left(n +\frac{y}{v} -\frac{\ell}{d} \right)\sqrt{2dv} \right) + \beta\left( 2dv\left( n+\frac{y}{v}-\frac{\ell}{d}\right)^2 \right)\right] \\&\hspace{1.5cm}(-1)^{n-\frac{1}{2}} q^{-\frac{dn^2}{2}} e^{-2\pi i n(dz-\ell\tau)}.
\end{align*}
As already shown in the proof of Lemma \ref{lem:limitR}, the difference of these two sgn-functions vanishes in the limit $\tau \to i \infty,$ if $\ell=0,...,k-1.$ For the other cases of $k,$ we get
\begin{align*}
    &\lim_{\tau \to i\infty} \sum_{n \in \mathbb{Z}+\frac{1}{2}} \left[\text{sgn}(n)-\text{sgn}\left(\left(n +\frac{y}{v} -\frac{\ell}{d} \right)\sqrt{2dv} \right)\right] (-1)^{n-\frac{1}{2}} q^{-\frac{dn^2}{2}} e^{-2\pi i n(dz-\ell\tau)}
    \\ &= 2 \zeta^{-\frac{d}{2}} e^{\left(-\frac{d}{4}+\ell\right)\pi i \tau}, \tag{3.6}
\end{align*}
arguing again as in the proof of Lemma \ref{lem:limitR}. For the terms involving $\beta$ we use an upper bound given in the proof of Lemma 1.8 in  \cite{zwegers2008mockthetafunctions}, namely
\begin{align*}
    \beta(m) \leq e^{-\pi m }   
\end{align*}
for real $m \geq 0 $ and get
 \begin{align*}
     &\sum_{n\in \mathbb{Z}+\frac{1}{2}} \left| \beta \left(2dv \left( n + \frac{y}{v} - \frac{\ell}{d}\right)^2 \right) (-1)^{n-\frac{1}{2}} q^{-\frac{dn^2}{2} } e^{-2\pi in (dz-\ell\tau)} \right| \\ 
     &\leq \sum_{n\in \mathbb{Z}+\frac{1}{2}}  e^{-2dv\pi  \left(n + \frac{y}{v} - \frac{\ell}{d} \right)^2} e^{d\pi v n^2}e^{-2\ell\pi nv}  e^{2d\pi ny} 
    = \hspace{-0.25cm}\sum_{n\in \mathbb{Z}+\frac{1}{2}} \hspace{-0.2cm} e^{-\pi v \left( dn^2 -2n\ell + \frac{2\ell^2}{d}\right) } e^{ 6d\pi ny   + 4\ell\pi y- \frac{2d\pi y^2}{v}}.
 \end{align*}
 Once again in order to determine the growth as $v\to \infty,$ we have to find the minimal value that $ dn^2-2n\ell+\frac{2\ell^2}{d^2} $ attains on $\mathbb{R}$ (it would suffice to find a more precise bound, by looking only at $n \in \mathbb{Z}+\frac{1}{2}$, but as we will see, the weaker bound is sufficient for our purposes). 
 The minimal value is attained at $n=-\frac{\ell}{d}$ and is $\frac{\ell^2}{d}.$ So in conclusion, we have
 \begin{align*}
     &\sum_{n\in \mathbb{Z}+\frac{1}{2}} \left| \beta \left(2dv \left( n + \frac{y}{v} - \frac{\ell}{d}\right)^2 \right) (-1)^{n-\frac{1}{2}} q^{-\frac{dn^2}{2} } e^{-2\pi in (dz-\ell\tau)} \right|  \leq C(y) e^{-\frac{2d\pi y^2}{v}} e^{-\pi v \frac{\ell^2}{d}},
 \end{align*}
 for large enough $v$. Next, we have
 \begin{align*}
   &\theta(\ell\tau,d\tau) R(dz-\ell\tau,d\tau) \\&=\theta(\ell\tau,d\tau)      \sum_{n \in \mathbb{Z}+\frac{1}{2}}  \beta\left( 2dv\left( n+\frac{y}{v}-\frac{\ell}{d}\right)^2 \right) (-1)^{n-\frac{1}{2}} q^{-\frac{dn^2}{2}} e^{-2\pi i n(dz-\ell\tau)}
    \\ &\hphantom{=}+ \theta(\ell\tau,d\tau)   \hspace{-0.25cm} \sum_{n \in \mathbb{Z}+\frac{1}{2}} \hspace{-0.1cm}\left[\text{sgn}(n)-\text{sgn}\left(\left(n +\frac{y}{v} -\frac{\ell}{d} \right)\sqrt{2dv} \right) \right] (-1)^{n-\frac{1}{2}} q^{-\frac{dn^2}{2}} e^{-2\pi i n(dz-\ell\tau)}.
 \end{align*}
 Using Lemma \ref{lem:thetaO} and our previous calculations, we get that the first term vanishes in the limit $\tau \to i\infty,$ as 
 \begin{align*}
    &\lim_{\tau \to i\infty} |\theta(\ell\tau,d\tau)|    \left|\sum_{n \in \mathbb{Z}+\frac{1}{2}}  \beta\left( 2dv\left( n+\frac{y}{v}-\frac{\ell}{d}\right)^2 \right) (-1)^{n-\frac{1}{2}} q^{-\frac{dn^2}{2}} e^{-2\pi i n(dz-\ell\tau)}\right|
    \\ &\leq \lim_{v \to \infty} C e^{-\pi  v \left( \frac{d}{4} -\ell\right)}  C(y) e^{-\pi v\frac{\ell^2}{d}} = \lim_{v\to \infty}\ C\cdot  C(y)  e^{-\pi v \left( \frac{\sqrt{d}}{2} - \frac{\ell}{\sqrt{d}}\right)^2 }=0,
 \end{align*}
as $ \left( \frac{\sqrt{d}}{2} - \frac{\ell}{\sqrt{d}}\right)^2 \geq 0$ and only vanishes if $\ell=\frac{d}{2}=k-\frac{1}{2},$ a value $\ell$ does not attain, as it is not an integer.

 For the terms of $R(dz-\ell\tau,d\tau)$ that are independent of $\beta$, we get by using Lemma \ref{lem:thetaO} and $(3.6)$ that
 \begin{align*}
    & \lim_{\tau \to i\infty} \hspace{-0.1cm} \theta(\ell\tau,d\tau)  \hspace{-0.25cm}    \sum_{n \in \mathbb{Z}+\frac{1}{2}} \hspace{-0.1cm }\left[\text{sgn}(n)-\text{sgn} \left(\left(n +\frac{y}{v} -\frac{\ell}{d} \right)\sqrt{2dv} \right) \right] \hspace{-0.05cm }(-1)^{n-\frac{1}{2}} q^{-\frac{dn^2}{2}} e^{-2\pi i n(dz-\ell\tau)}
   \\ &\hspace{1cm}= -2i\zeta^{-\frac{d}{2}},
 \end{align*}
 if $\ell=k,...,2k-2$ and the limit is $0$ if $\ell=0,...,k-1.$ This gives
 \begin{align*}
     \lim_{\tau \to i\infty} H_k(z,\tau) =  \frac{i}{2}\sum_{\ell=k}^{2k-2} -2i\zeta^{\ell} \zeta^{-\frac{d}{2}} = \sum_{\ell=k}^{2k-2} \zeta^{\ell-k+\frac{1}{2}} = \sum_{\ell=0}^{k-2} \zeta^{\ell + \frac{1}{2}}.
 \end{align*}
 By multiplying this with $\zeta^{\frac{1}{2}} - \zeta^{\frac{1}{2}} $ and using  the telescoping sum, we get the result. \end{proof}  
 Applying this result, we get the following corollary:
\begin{corollary} \textit{For $k \geq 3$ and all $j\geq 2,$ we have}
\begin{align*}
   \lim_{\tau \to  i\infty} f_{k,j}^*(\tau)= \lim_{\tau \to  i\infty} \hat{f}_{k,j}(\tau)=-\frac{B_j}{2j}.
\end{align*}
\end{corollary}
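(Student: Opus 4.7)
The plan is to pin down $\lim_{\tau \to i\infty} f_{k,j}^*(\tau)$ via the generating-function identity (3.3), then read off the coefficients using Lemma \ref{lem:bern}. The claim for $\widehat{f}_{k,j}$ will follow immediately from (3.5): for $j \neq 2$ the two functions coincide, while for $j = 2$ they differ by $(2k-1)/(8\pi v)$, which vanishes as $v = \text{Im}(\tau) \to \infty$. So it suffices to handle $f_{k,j}^*$.

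The heart of the argument will be to establish $\lim_{\tau \to i\infty} K_k^*(z,\tau,\overline{\tau}) = 1$. Since $K_k^*$ is the constant term in $\overline{z}$ of $K_k^\circ = \widehat{FG}_k$, I would first take the limit $\tau \to i\infty$ termwise in
\begin{align*}
\widehat{FG}_k = FG_k(\zeta,q) - \tfrac{2i\sin(\pi z)}{(q)_\infty} H_k(z,\tau) + \tfrac{\zeta}{(q)_\infty}\theta_{1,2k-1}(\tau) - \tfrac{\zeta(1-\zeta)}{(q)_\infty}\sum_{m=0}^{k-3}\zeta^m \theta_{2m+3,2k-1}(\tau).
\end{align*}
As $\tau \to i\infty$, we have $(q)_\infty \to 1$ and $FG_k(\zeta,q) \to 0$; for each odd $a$ with $0 < a < 2k-1$ the quadratic $(2k-1)n^2 + an$ attains its integer minimum $0$ uniquely at $n = 0$, so $\theta_{a,2k-1}(\tau) \to 1$; and Lemma \ref{lem:limitHk} gives $-2i\sin(\pi z)\lim H_k = 1 - \zeta^{k-1}$. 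Together with the telescoping identity $\zeta(1-\zeta)\sum_{m=0}^{k-3}\zeta^m = \zeta - \zeta^{k-1}$, the four contributions collapse to $0 + (1 - \zeta^{k-1}) + \zeta - (\zeta - \zeta^{k-1}) = 1$. Since this limit is independent of $\overline{z}$, taking the constant term in $\overline{z}$ preserves it, giving $\lim K_k^* = 1$.

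Substituting into (3.3) then yields $\lim_{\tau \to i\infty} \mathbb{F}_k(z,\tau) = 1/(2i\sin(\pi z)) = \zeta^{1/2}/(\zeta - 1)$. Invoking Lemma \ref{lem:bern} and the continuity of $\exp$, I would compare Taylor coefficients in $z$ on both sides of
\begin{align*}
\exp\left(2\sum_{j \geq 0} \lim_{\tau \to i\infty} f_{k,j}^*(\tau)\, \tfrac{(2\pi iz)^j}{j!}\right) = \exp\left(-\sum_{\ell \geq 2}\tfrac{B_\ell}{\ell}\tfrac{(2\pi iz)^\ell}{\ell!}\right),
\end{align*}
which yields $\lim_{\tau \to i\infty} f_{k,j}^*(\tau) = -B_j/(2j)$ for all even $j \geq 2$ (and $0$ for odd $j$ by Theorem \ref{thm:main}(1)).

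The main technical hurdle will be justifying the interchange of the ``constant term in $\overline{z}$'' operation with the limit $\tau \to i\infty$. This should be safe because the $\overline{z}$-dependence of $K_k^\circ$ is confined entirely to the $R$-function inside $H_k$, and the limit of $H_k$ computed in Lemma \ref{lem:limitHk} is itself $\overline{z}$-independent; so all positive-order Taylor coefficients of $K_k^\circ$ in $\overline{z}$ tend to $0$ as $\tau \to i\infty$, leaving the constant one to match the full limit.
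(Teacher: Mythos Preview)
Your argument is correct and follows essentially the same route as the paper. Both proofs hinge on computing $\lim_{\tau\to i\infty} K_k^\circ = 1$ via Lemma \ref{lem:limitHk} and the elementary limits $FG_k\to 0$, $(q)_\infty\to 1$, $\theta_{a,2k-1}\to 1$, then passing to $K_k^*$; the only cosmetic difference is that the paper phrases the conclusion as $\lim f_{k,j}^* = \lim f_{k,j}$ and invokes the already-proved Theorem~\ref{thm:main}(1), whereas you go directly through Lemma~\ref{lem:bern}, which is exactly what the proof of Theorem~\ref{thm:main}(1) does anyway.
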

\begin{proof} For the $f_{k,j}^*$, it suffices to show that
\begin{align*}
   \lim_{\tau \to i\infty}  \frac{1}{2\pi iz} \left( \exp\left( 2\sum_{j= 1}^\infty f_{k,j}^*(\tau) \frac{(2\pi i z)^j}{j!} \right) - \exp\left(2 \sum_{j=1}^\infty f_{k,j}(\tau) \frac{(2\pi i z)^j}{j!} \right) \right)=0.
\end{align*}
By the definitions of the $f_{k,j}$ and $f_{k,j}^*$, we calculate
\begin{align*}
   \lim_{\tau\rightarrow i\infty} \frac{(q)_\infty \left(K^\text{*}_k(z,\tau)-FG_k(\zeta,q)-\frac{\theta_{1,2k-1}(\tau)}{(q)_\infty} \right)}{2i \sin(\pi z)}. \tag{3.7}
\end{align*}
Recall that
\begin{align*}
    K^o_k(z,\overline{z},\tau,\overline{\tau})&=FG_k(\zeta,q) - \frac{2i\sin(\pi z)}{(q)_\infty} H_k(z,\tau) \\&\hspace{1cm}+ \frac{\zeta}{(q)_\infty} \theta_{1,2k-1}(\tau) - \frac{\zeta(1-\zeta)}{(q)_\infty}  \sum_{\ell=0}^{k-3} \zeta^\ell \theta_{2\ell+3,2k-1}(\tau).
\end{align*}
Calculating the limits term wise is straight forward for all terms on the right-hand side. We have
\begin{align*}
     \lim_{\tau \to i \infty} FG_k(\zeta,q)=0
\end{align*}
and 
\begin{align*}
    \lim_{\tau \to i\infty} \theta_{1,2k-1}(\tau) =  \lim_{\tau \to i\infty}\theta_{2\ell+3,2k-1}(\tau) =1,
\end{align*}
for $\ell=0,...,k-3.$ By Lemma \ref{lem:limitHk} we also have
\begin{align*}
       \left(\zeta^{-\frac{1}{2}} - \zeta^{\frac{1}{2}} \right) \lim_{\tau \to i \infty} H_k(z,\tau)= 1- \zeta^{k-1}.
\end{align*}
So
\begin{align*}
    \lim_{\tau \to i\infty} K^\circ_k(z,\overline{z},\tau,\overline{\tau})=1.
\end{align*}
With the definition of $K^*_k,$ we get
\begin{align*}
    \lim_{\tau \to i\infty} \frac{(q)_\infty \left( K^\text{*}_k(z,\tau)-FG_k(\zeta,q)-\frac{\theta_{1,2k-1}(\tau)}{(q)_\infty} \right)}{2i \sin(\pi z)}=0,
\end{align*}
which yields the claim. The limit for $\hat{f}_{k,j}$ now follows directly from the limit of the $f_{k,j}^*$ and the definition of $\hat{f}_k.$ 
\end{proof}
\subsection{$k$-rank moments as traces.} In this section, we will show the following lemma: 
\begin{lemma} \textit{With the family $f_k=\{  f_{k,j}\}_{j \in \mathbb{N}}$ as defined in (3.1), we have}
\begin{align*}
    \sum_{j=0}^\infty R_{k,j}(q) \frac{z^j}{j!}= \frac{2 \sinh(\frac{z}{2})}{z(q)_\infty} \sum_{j=0}^\infty \text{Tr}_j(\phi,f_k;\tau) z^j,
\end{align*}
\textit{with $\phi$ as in (1.3).}
\end{lemma}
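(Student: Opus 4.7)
The approach is to reduce the identity to Garvan's bivariate generating function $FG_k(\zeta,q)$, invoke the defining relation (3.1), and apply the P\'olya cycle-index identity (Lemma \ref{lem:PCI}). As a first step, expanding $e^{mz}=\sum_j (mz)^j/j!$ and interchanging summations gives
\begin{align*}
\sum_{j=0}^\infty R_{k,j}(q)\,\frac{z^j}{j!} \;=\; \sum_{n\geq 0}q^n\sum_{m\in\mathbb{Z}} N_k(m,n)\,e^{mz} + \frac{\theta_{1,2k-1}(\tau)}{(q)_\infty} \;=\; FG_k(e^z,q)+\frac{\theta_{1,2k-1}(\tau)}{(q)_\infty},
\end{align*}
where the $z$-independent summand $\theta_{1,2k-1}(\tau)/(q)_\infty$ is absorbed into $R_{k,0}(q)$; this is the $k\geq 3$ analogue of the ``$+1$'' appearing in the $k=2$ definition (1.2), consistent with the remark after (3.1) that $\theta_{1,3}(\tau)/(q)_\infty=1$.

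Next, I would rewrite (3.1) in this moment variable $z$. Writing $y$ for the dummy variable in (3.1) (so that $\zeta=e^{2\pi iy}$) and substituting $y=z/(2\pi i)$ turns $(2\pi iy)^j$ into $z^j$ and, via $\sin(-iz/2)/(-iz/2)=2\sinh(z/2)/z$, produces
\begin{align*}
FG_k(e^z,q)+\frac{\theta_{1,2k-1}(\tau)}{(q)_\infty} \;=\; \frac{2\sinh(z/2)}{z\,(q)_\infty}\,\exp\!\left(2\sum_{j=1}^\infty f_{k,j}(\tau)\,\frac{z^j}{j!}\right).
\end{align*}

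To finish, I would apply Lemma \ref{lem:PCI} with $x_j:=2 f_{k,j}(\tau)/j!$ and $w:=z$. For each partition $\lambda=(1^{\ell_1},\ldots,n^{\ell_n})\vdash n$, the weight
\begin{align*}
\prod_{j=1}^n \frac{x_j^{\ell_j}}{\ell_j!} \;=\; \prod_{j=1}^n \frac{2^{\ell_j} f_{k,j}^{\ell_j}}{\ell_j!\,(j!)^{\ell_j}} \;=\; \phi(\lambda)\,(f_k)_\lambda(\tau)
\end{align*}
matches (1.3), so the exponential equals $\sum_{n\geq 0}\text{Tr}_n(\phi,f_k;\tau)\,z^n$. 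Combining with the previous steps yields the lemma. The whole argument is essentially bookkeeping; the only subtle point is correctly matching the $z$-independent $\theta_{1,2k-1}/(q)_\infty$ contribution on both sides, which generalizes the classical $k=2$ correction.
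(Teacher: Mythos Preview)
Your approach is the paper's: invoke (3.1), apply the P\'olya identity (Lemma~\ref{lem:PCI}) with $x_j=2f_{k,j}/j!$, and substitute $z\mapsto z/(2\pi i)$ using $\sin(z/2i)=-i\sinh(z/2)$. The paper applies P\'olya first and substitutes afterwards; you substitute first and then apply P\'olya --- an immaterial reordering.

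One caveat worth flagging. Your first display asserts that the term $\theta_{1,2k-1}(\tau)/(q)_\infty$ is ``absorbed into $R_{k,0}(q)$,'' but definition~(1.4) for $k\geq 3$ contains no such added constant; strictly,
\[
\sum_{j\geq 0} R_{k,j}(q)\,\frac{z^j}{j!}=FG_k(e^z,q),
\]
and by (5.2) one has $R_{k,0}(q)=(1-\theta_{1,2k-1}(\tau))/(q)_\infty$, not $\theta_{1,2k-1}(\tau)/(q)_\infty$. The paper's own proof makes the mirror-image slip: after applying Lemma~\ref{lem:PCI} to (3.1) it records the result as $FG_k(\zeta,q)=\frac{\sin(\pi z)}{\pi z(q)_\infty}\sum_j\mathrm{Tr}_j(\phi,f_k;\tau)(2\pi iz)^j$, silently dropping the $\theta_{1,2k-1}/(q)_\infty$ from the left side of (3.1). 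So the stated identity is in fact off by exactly $\theta_{1,2k-1}(\tau)/(q)_\infty$ at $z=0$ under the paper's definitions. Your instinct to track this term explicitly is correct; the discrepancy lives in the paper's statement rather than in your argument.
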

\begin{proof} Using Lemma \ref{lem:PCI} with 
$\omega=2\pi i z$ and $x_j=\frac{2}{j!} f_{k,j},$ we find that
\begin{align*}
    FG_k(\zeta,q)= \frac{\sin(\pi z)}{\pi z (q)_\infty} \sum_{j=0}^\infty \text{Tr}_j(\phi,f_k;\tau)(2\pi i z)^j.
\end{align*}
Now substituting $z$ by $\frac{z}{2\pi i}$ in 
\begin{align*}
    FG_k(\zeta,q)=\sum_{j= 0}^\infty R_{k,j}(q) \frac{(2\pi iz)^j}{j!} \tag{3.8}
\end{align*}
and using $\sin \left( \frac{z}{2i}\right)= -i\sinh \left( \frac{z}{2} \right)$ gives
\begin{align*}
    \sum_{j=0}^\infty R_{k,j}(q) \frac{z^j}{j!}&=\frac{2i\sin\left( \frac{z}{2i}\right)}{z(q)_\infty} \sum_{j=0}^\infty \text{Tr}_j(\phi,f_k;\tau) z^j= \frac{2\sinh(\frac{z}{2})}{z(q)_\infty} \sum_{j=0}^\infty \text{Tr}_j(\phi,f_k;\tau)z^j. \qedhere
\end{align*}
\end{proof}

\section{An algebra closed under differentiation}
\subsection{Rewriting terms as Laurent series in $z$.}
In this section, we will deviate from our previous notation of $f_{k,j},$ because we will only be concerned with the case $k=3.$ Suppressing this index will make the resulting formulas somewhat more compact. To prevent confusion, the reader will be reminded, once we switch back to the established notation.

 We want to use the rank-crank type partial differential equation (2.12) in order to study the $f_{j}.$ For this we rewrite by using (3.1)
\begin{align*}
    A_5(z,\tau)&= \frac{(q)_\infty FG_3(\zeta,q) +\zeta \theta_{1,5}(\tau) -\zeta(1-\zeta) \theta_{3,5}(\tau) }{-2 i \sin(\pi z)} \\
     &= \frac{(q)_\infty FG_3(\zeta,q) + \theta_{1,5}(\tau)-  \theta_{1,5}(\tau)   }{-2 i \sin(\pi z)}  - \frac{1}{2i}  \frac{\zeta}{\sin(\pi z)} \theta_{1,5}(\tau)- \zeta^{\frac{3}{2}} \theta_{3,5}(\tau) \\
     &= -\frac{1}{2\pi i z} \exp\left(2 \sum_{j= 2}^\infty f_j(\tau) \frac{(2\pi i z)^j}{j!}\right) +\frac{\theta_{1,5}(\tau)}{2i\sin(\pi z)}  - \frac{1}{2i}  \frac{\zeta}{\sin(\pi z)} \theta_{1,5}(\tau) \\&\hspace{1cm}- \zeta^{\frac{3}{2}} \theta_{3,5}(\tau).
\end{align*}

Now,  to make the process of applying the differential operator
less challenging, we will rewrite all the terms of (2.13) as Laurent series in $z$ and then apply the operator to each term individually.
Once again using (2.2) as well as Lemma \ref{lem:PCI} with $\omega= 2\pi i z$ and $x_j= \frac{10G_j(\tau)}{j!}$, we get using $\lambda=(1^{m_1},...,n^{m_n}),$
\begin{align*}
    24\left( \frac{(q)_\infty}{-2i \sin(\pi z)}C(\zeta,q) \right)^5
    &=\frac{3i}{4\pi^5z^5} \exp\left(10 \sum_{j= 2}^\infty G_j(\tau)\frac{(2\pi i z)^j}{j!} \right)
    \\&=\frac{3i}{4\pi^5z^5} \sum_{n = 0}^\infty \sum_{\lambda \vdash n} \left[ \prod_{j=1}^n \left( \frac{10 G_j(\tau)}{j!} \right)^{m_j} \frac{1}{m_j!}   \right] (2\pi i z)^n 
    \\&=\frac{3i}{4} \sum_{n = 0}^\infty \sum_{\lambda \vdash n} \left[ \prod_{j=1}^n \left( \frac{10 G_j(\tau)}{j!} \right)^{m_j} \frac{1}{m_j!}   \right] (2 i)^n ( \pi z)^{n-5}
\end{align*}
and for $\omega= 2\pi i z$ and $x_j=\frac{2f_j(\tau)}{j!}$ 
\begin{align*}
\frac{1}{2\pi i z} \exp\left(2 \sum_{j= 2}^\infty f_{j}(\tau) \frac{(2\pi i z)^j}{j!}\right) &= \frac{1}{2\pi i z} \sum_{n= 0}^\infty \sum_{\lambda \vdash n } \left[\prod_{j=1}^n \left(  \frac{2f_{j}(\tau)}{j!}\right)^{m_j} \frac{1}{m_j!} \right] (2\pi i z)^n  \\
&= \sum_{n = 0}^\infty \sum_{\lambda \vdash n } \left[\prod_{j=1}^n \left(  \frac{2f_{j}(\tau)}{j!}\right)^{m_j} \frac{1}{m_j!} \right] (2 \pi i z)^{n-1} .
\end{align*}
By the power series expansion of the exponential function,  
\begin{align*}
    \zeta^{1.5} \theta_{3,5}(\tau)= \sum_{k=0}^\infty \frac{(3\pi i z)^k}{k!} \theta_{3,5}(\tau).
\end{align*}
To rewrite the remaining terms, we will use (2.4) as well as $  B_n \left(\frac{1}{2}\right)= -B_n (1-2^{1-n}),$ 
 and obtain
\begin{align*}
    \frac{ \theta_{1,5}(\tau)}{\sin(\pi z)}= \sum_{n=0}^\infty 2 B_{2n} \frac{(-1)^n(1-2^{2n-1})}{(2n)!} (\pi z)^{2n-1} \theta_{1,5}(\tau)
\end{align*}
as well as
\begin{align*}
    \frac{\zeta}{\sin(\pi z)}\theta_{1,5}(\tau)= \sum_{n=0}^\infty B_n\left(\frac{3}{2}\right) \frac{(2i)^n}{n!} (\pi z)^{n-1} \theta_{1,5}(\tau).
\end{align*}
\newpage
\subsection{Applying the operator to the terms.} We have
\begin{align*}
    &\hphantom{=} \mathcal{H}_3 \mathcal{H}_1  \left(-\sum_{n =0}^\infty \sum_{\lambda \vdash n } \left[\prod_{k=1}^n \left(  \frac{2f_k(\tau)}{k!}\right)^{m_k} \frac{1}{m_k!} \right] (2 \pi i z)^{n-1}  \right)
    \\ &= \frac{25}{\pi^2 } \sum_{n = 6}^\infty \sum_{\lambda \vdash n} \sum_{\ell=1}^n \sum_{\substack{j=1 \\j\neq l}}^n \frac{m_jm_\ell 2^{m_j+m_\ell} f_j'(\tau)f_j(\tau)^{m_j-1}f_\ell'(\tau)f_\ell(\tau)^{m_\ell-1}}{j!^{m_j} \cdot m_j!  \cdot \ell!^{m_\ell} \cdot  m_k!} 
    \left[\prod_{\substack{k=1\\l \neq k \neq j}}^n \left( \frac{2 f_k(\tau)}{k!}\right)^{m_k} \frac{1}{m_k!} \right](2\pi i z)^{n-1} \\
    &\hspace{1cm}+\frac{25}{\pi^2}\sum_{n = 2}^\infty \sum_{\lambda \vdash n} \sum_{\ell=1}^n \frac{m_l2^{m_l} f_\ell''(\tau)f_\ell(\tau)^{m_\ell-1}}{\ell!^{m_\ell} \cdot m_\ell!} \left[\prod_{\substack{k=1\\ k \neq l}}^n \left(\frac{2f_k(\tau)}{k!}\right)^{m_k} \frac{1}{m_k!}  \right](2\pi i z)^{n-1}\\
    &\hspace{1cm}+ \frac{25}{\pi^2} \sum_{n = 4}^\infty \sum_{\lambda \vdash n} \sum_{\ell=1}^n\frac{m_\ell(m_\ell-1) 2^{m_\ell} f_\ell'(\tau)^2f_\ell(\tau)^{m_\ell-2}}{\ell!^{m_\ell} \cdot m_\ell!}
    \left[\prod_{\substack{k=1\\ k \neq l}}^n \left(\frac{2f_k(\tau)}{k!}\right)^{m_k} \frac{1}{m_k!} \right]  (2\pi i z)^{n-1}\\
    &\hspace{1cm}- \frac{10}{\pi i }\sum_{n = 2}^\infty \sum_{\lambda \vdash n} \sum_{\ell=1}^n \frac{m_\ell2^{m_\ell} f_\ell'(\tau)f_\ell(\tau)^{m_\ell-1}}{\ell!^{m_\ell} \cdot m_\ell!} \left[\prod_{\substack{k=1\\ k \neq l}}^n \left(\frac{2f_k(\tau)}{k!}\right)^{m_k} \frac{1}{m_k!}  \right](n-1)(n-2)(2\pi i z)^{n-3}\\
    &\hspace{1cm}-\frac{50}{\pi i} G_2'(\tau)  \sum_{n = 0}^\infty \sum_{\lambda \vdash n } \left[\prod_{k=1}^n  \left(  \frac{2f_k(\tau)}{k!}\right)^{m_k} \frac{1}{m_k!} \right] (2 \pi i z)^{n-1}   \\
    &\hspace{1cm}- \frac{300}{\pi i } G_2(\tau) \sum_{n = 2}^\infty \sum_{\lambda \vdash n } \sum_{\ell=1}^n \frac{m_\ell2^{m_\ell}f_\ell'(\tau)f_\ell(\tau)^{m_\ell-1}}{\ell!^{m_\ell} \cdot m_\ell!}
    \left[\prod_{\substack{k=1\\k \neq l}}^n \left(  \frac{2f_k(\tau)}{k!}\right)^{m_k} \frac{1}{m_k!} \right] (2 \pi i z)^{n-1}  \\
    &\hspace{1cm}- \sum_{n = 2}^\infty \sum_{\lambda \vdash n }\left[\prod_{k=1}^n \left(  \frac{2f_k(\tau)}{k!}\right)^{m_k} \frac{1}{m_k!} \right] (n-1)(n-2)(n-3)(n-4) (2 \pi i z)^{n-5} \\
    &\hspace{1cm}-60 G_2(\tau) \sum_{n = 2}^\infty \sum_{\lambda \vdash n } 
    \left[\prod_{k=1}^n \left(  \frac{2f_k(\tau)}{k!}\right)^{m_k} \frac{1}{m_k!} \right] (n-1)(n-2)(2 \pi i z)^{n-3} \\
    &\hspace{1cm}- 500 G_2^2(\tau) \sum_{n = 2}^\infty \sum_{\lambda \vdash n }     \left[\prod_{k=1}^n \left(  \frac{2f_k(\tau)}{k!}\right)^{m_k} \frac{1}{m_k!} \right] (2 \pi i z)^{n-1}.
\end{align*}
We have
\begin{align*}    
&\mathcal{H}_3 \mathcal{H}_1 \left( \sum_{k=0}^\infty \frac{(3\pi i z)^k}{k!}   \cdot \theta_{3,5}(\tau)\right)
    \\&= 25 \theta_{3,5}^{[2]}(\tau)  \cdot \sum_{k = 0}^\infty \frac{(3\pi i z)^k}{k!} 
    + \theta_{3,5}^{[1]}(\tau) \cdot \left( \frac{45}{2} +300 G_2(\tau) \right) \sum_{k= 0}^\infty \frac{(3\pi i z)^k}{k! }    \\
    &\hspace{1cm}+ \theta_{3,5}(\tau) \cdot  \left(\frac{50}{ \pi i}G_2'(\tau) +500G_2^2(\tau)  +135 G_2(\tau) +\frac{81}{16}\right) \sum_{k = 0}^\infty \frac{(3\pi i z)^k}{k!}  .
\end{align*}
Next, we have  
\begin{align*}    
&\mathcal{H}_3 \mathcal{H}_1 \bigg( \frac{1}{2i} \sum_{n \geq 0} B_n\left(\frac{3}{2}\right)\frac{(2i)^n}{n!} (\pi z)^{n-1} \cdot \theta_{1,5}(\tau)  \bigg)\\
    &= 25\sum_{n=0 }^\infty B_n\left(\frac{3}{2}\right) \frac{(2\pi i z)^{n-1}}{n!} \cdot \theta_{1,5}^{[2]}(\tau) 
    + 10 \sum_{n= 0}^\infty B_n\left(\frac{3}{2}\right) (n-1)(n-2) \frac{(2\pi i z)^{n-3}}{n!} \cdot  \theta_{1,5}^{[1]}(\tau)\\
    &\hspace{1cm}+ \frac{50}{\pi i}G_2'(\tau) \sum_{n= 0}^\infty B_n\left(\frac{3}{2}\right) \frac{(2\pi i z)^{n-1}}{n!} \cdot \theta_{1,5}(\tau)
    +300G_2(\tau)\sum_{n= 0}^\infty B_n\left(\frac{3}{2}\right) \frac{(2\pi i z)^{n-1}}{n!} \cdot \theta_{1,5}^{[1]}(\tau)\\
    &\hspace{1cm}+ \sum_{n = 0}^\infty B_n\left(\frac{3}{2}\right) (n-1)(n-2)(n-3)(n-4) \frac{(2\pi i z)^{n-5}}{n!} \cdot \theta_{1,5}(\tau)    
   \\&\hspace{1cm}+60G_2(\tau) \sum_{n = 0}^\infty B_n\left(\frac{3}{2}\right) (n-1)(n-2) \frac{(2\pi i z)^{n-3}}{n!} \cdot \theta_{1,5}(\tau)
    \\ &\hspace{1cm}+ 500 G_2^2(\tau) \sum_{n = 0}^\infty B_n\left(\frac{3}{2}\right) \frac{(2\pi i z)^{n-1}}{n!}\cdot \theta_{1,5}(\tau).
 \end{align*}
Lastly, we have  
\begin{align*} 
&\mathcal{H}_3 \mathcal{H}_1 \left( \sum_{n=0}^\infty B_{2n} \frac{(-1)^n(1-2^{2n-1})}{(2n)!} (\pi z)^{2n-1}   i \theta_{1,5}(\tau) \right)\\
    &=25 \sum_{n=0}^\infty B_{2n} \frac{(-1)^n(1-2^{2n-1})}{(2n)!} (\pi z)^{2n-1}   i\theta_{1,5}^{[2]}(\tau) \\
    &\hspace{1cm }- \frac{5}{2} \sum_{\substack{n=0\\ n \neq 1}}^\infty B_{2n} \frac{(-1)^n(1-2^{2n-1})}{2n \cdot (2n-3)!} (\pi z)^{2n-3}   i  \theta_{1,5}^{[1]}(\tau)\\
    &\hspace{1cm }+300G_2(\tau)\sum_{n=0}^\infty B_{2n} \frac{(-1)^n(1-2^{2n-1})}{(2n)!} (\pi z)^{2n-1}   i\theta_{1,5}^{[1]}(\tau)\\
    &\hspace{1cm }+ \frac{1}{16}  \sum_{\substack{n=0\\ n \neq 1\\n\neq 2}}^\infty B_{2n} \frac{(-1)^n(1-2^{2n-1})}{2n \cdot (2n-5)!} (\pi z)^{2n-5}   i  \theta_{1,5}(\tau) \\
    &\hspace{1cm }+ \frac{50}{\pi  } G_2'(\tau)  \sum_{n=0}^\infty B_{2n} \frac{(-1)^n(1-2^{2n-1})}{(2n)!} (\pi z)^{2n-1}  \theta_{1,5}(\tau) \\
    &\hspace{1cm }-15 G_2(\tau)\sum_{n=0}^\infty B_{2n} \frac{(-1)^n(1-2^{2n-1})}{2n \cdot (2n-3)!} (\pi z)^{2n-3}   i\theta_{1,5}(\tau) \\
    &\hspace{1cm }+ 500G_2^2(\tau) \sum_{n=0}^\infty B_{2n} \frac{(-1)^n(1-2^{2n-1})}{ (2n)!} (\pi z)^{2n-1}    i \theta_{1,5}(\tau).
\end{align*}

We will refrain from stating the complete differential equation in the terms we just calculated, but rather refer to them, as we need them for further calculations. 
\subsection{Proof of Theorem \ref{thm:main} (3)}   Next, we will provide a proof for a weaker version of Theorem \ref{thm:main} (3) and will then refine the result, by utilizing a differential equation for the $\theta$-series. We start with the following lemma:
\begin{lemma} \label{lem:alge}
\textit{The algebra
\begin{align*}
    \mathbb{F}:=\mathbb{C}[f_2,f_2',f_4,f_4',\dots,G_2,G_4,G_6,\dots, \theta_{1,5},\theta_{1,5}^{[1]},\dots,\theta_{3,5},\theta_{3,5}^{[1]},\dots] 
\end{align*}
is closed under $D.$}
\end{lemma}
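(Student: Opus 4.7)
Since $\mathbb{F}$ is generated (as an algebra over $\mathbb{C}$) by the listed functions, it suffices to verify that the image of each generator under $D$ again lies in $\mathbb{F}$. For the Eisenstein generators, the classical Ramanujan identities show that $DG_{2k}$ is a polynomial in $G_2,G_4,G_6$, so the Eisenstein subalgebra is preserved. For the theta-generators, the very definition (1.6) of $\theta_{a,b}^{[m]}$ gives $D\theta_{a,5}^{[m]}=\tfrac{1}{2}\theta_{a,5}^{[m+1]}\in\mathbb{F}$. For $f_{2j}$, we have $Df_{2j}=f_{2j}'$, which is a generator. Hence the only nontrivial step is to show that $Df_{2j}'=f_{2j}''\in\mathbb{F}$ for every even $j\geq 2$, and I will prove this by strong induction on $j$.

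The key tool is the rank-crank type PDE (2.13) applied to the Laurent expansion of $A_5(z,\tau)$ derived at the beginning of Section 4.1, namely
\begin{align*}
A_5(z,\tau)=-\tfrac{1}{2\pi i z}\exp\!\Bigl(2\sum_{j\geq 2}f_j(\tau)\tfrac{(2\pi iz)^j}{j!}\Bigr)+\tfrac{\theta_{1,5}(\tau)}{2i\sin(\pi z)}-\tfrac{1}{2i}\tfrac{\zeta\,\theta_{1,5}(\tau)}{\sin(\pi z)}-\zeta^{3/2}\theta_{3,5}(\tau).
\end{align*}
Using the termwise computation of $\mathcal{H}_3\mathcal{H}_1$ recorded in Section 4.2, I would compare the coefficients of $(2\pi iz)^{n-1}$ in both sides of (2.13) for each even $n\geq 2$. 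On the left-hand side, the only partition $\lambda\vdash n$ containing the part $n$ is $\lambda=(n)$, so the quantity $f_n''$ appears exactly once, with the nonzero scalar coefficient $\tfrac{50}{\pi^{2} n!}$ arising from the $f''$-term of the expansion. Every other contribution to the coefficient of $(2\pi iz)^{n-1}$ involves only $f_\ell''$ with $\ell<n$, first derivatives $f_\ell'$ and values $f_\ell$ for $\ell\leq n$, Eisenstein data $G_{2k}$ together with $G_2'\in\mathbb{C}[G_2,G_4]$, and theta-derivatives $\theta_{a,5}^{[m]}$ with $a\in\{1,3\}$ and $m\leq 2$. On the right-hand side, the expansion of Lemma \ref{lem:PCI} gives a polynomial in the $G_{2k}$'s. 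Solving the resulting linear equation for $f_n''$ and invoking the inductive hypothesis then places $f_n''$ in $\mathbb{F}$; the base case $n=2$ is immediate since the inductive sum is empty.

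The main obstacle is bookkeeping. One must verify that at each power of $z$ the coefficient of $f_n''$ is indeed nonzero and isolated, and that every remaining term genuinely lies in $\mathbb{F}$. This hinges on three observations drawn from Section 4.2: $\mathcal{H}_3\mathcal{H}_1$ is second order in $\partial_\tau$ so no $f_j^{(3)}$ ever arises; the mixed products $f_j''\,f_\ell^{m_\ell-1}$ and $f_j'f_\ell'$ only involve indices bounded by $n$, with $n$ attained solely through $\lambda=(n)$; and all $G_{2k}$-derivatives that appear reduce to the closed Eisenstein subalgebra by Ramanujan. Once these points are checked, the induction goes through and yields Lemma \ref{lem:alge}.
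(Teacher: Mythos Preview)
Your proposal is correct and follows essentially the same route as the paper's proof: both dispose of the Eisenstein and theta generators trivially, then argue by induction on the even index $n$, comparing the coefficient of $z^{n-1}$ in the rank--crank PDE (2.13) to isolate $f_n''$ (which appears with a single nonzero scalar factor coming from the partition $\lambda=(n)$) and solve for it in terms of data already in $\mathbb{F}$. The paper carries out the base case $f_2''$ explicitly by writing out every contribution to the $z$-coefficient, whereas your outline treats it schematically; one small imprecision is that the bounds ``$f_\ell,f_\ell'$ for $\ell\le n$'' are not quite sharp (the $(2\pi iz)^{n-3}$ and $(2\pi iz)^{n-5}$ pieces bring in $f_{n+2}$ and $f_{n+4}$), but since all $f_\ell,f_\ell'$ are generators this does not affect the argument.
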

\begin{proof} 
It is well-known, that the statement holds for the algebra 
\begin{align*}
    \mathbb{C}[G_2,G_4,G_6,\dots].
\end{align*}
By (1.6), it also holds for the algebra
\begin{align*}
    \mathbb{C}\left[\theta_{1,5},\theta_{1,5}^{[1]},\dots,\theta_{3,5},\theta_{3,5}^{[1]},\dots\right].
\end{align*}
Now by the rules of differentiation, the algebra
\begin{align*}
    \mathbb{C}\left[G_2,G_4,G_6,\dots,\theta_{1,5},\theta_{1,5}^{[1]},\dots,\theta_{3,5},\theta_{3,5}^{[1]},\dots\right]
\end{align*}
is also closed under differentiation. We will show by induction on $n \in \mathbb{N},$ that $f_{2n}''$ is also contained in the algebra $ \mathbb{F}.$

For the base case $n=1$, we turn our attention to the differential equation (2.13) and isolate the coefficients of $z$ on both sides. The coefficient of $z$ on the right side of the differential equation is
\begin{align*}
    -i \pi  \left[ 1000 G_2^3(\tau)+ 100 G_2(\tau)G_4(\tau)+\frac{2}{3} G_6(\tau)\right].
\end{align*}
On the left hand side, the $z$-coefficient from $-\frac{220 G_4(\tau) A_5(z,\tau)}{3}$ is given by
\begin{align*}
    \frac{220}{3}G_4(\tau) \left[ 2\pi if_2(\tau) + \pi i  \cdot \theta_{1,5}(\tau) +3\pi i \cdot \theta_{3,5}(\tau) \right].
\end{align*}
New, we will look at the $z$ coefficients of \\ $\mathcal{H}_3\mathcal{H}_1 \left(-\sum_{n = 0}^\infty \sum_{\lambda \vdash n } \left[\prod_{k=1}^n \left(  \frac{2f_k(\tau)}{k!}\right)^{m_k} \hspace{-0.1cm}\frac{1}{m_k!} \right] (2 \pi i z)^{n-1}  \right)$. Those are
\begin{align*}
    &\hphantom{+} \frac{50}{\pi} i f_2''(\tau)  -  \left( 60\pi if_4(\tau) + 600f_2'(\tau)  \right)  G_2(\tau) - 10 f_4(\tau) - \frac{2}{3}\pi if_6(\tau) 
    \\&- \bigg( 100 G_2'(\tau)+ 120 f_2'(\tau) +40 \pi if_2^2(\tau)+20\pi if_4(\tau) +360\pi i  G_2(\tau) f_2(\tau) 
    \\&\hspace{1cm}+ 1000\pi i G_2^2(\tau) \bigg) f_2(\tau).    
\end{align*}
Next, we will turn to the coefficients of $z$ obtained by applying the differential operators to $\sum_{k=0}^\infty \frac{(3\pi i z)^k}{k!}   \cdot \theta_{3,5}(\tau).$ Here we get
\begin{align*}
&75\pi i \cdot \theta_{3,5}^{[2]}(\tau) + \!\! \left( \frac{135}{2} \pi i  +900 \pi i G_2(\tau) \right)\cdot \theta_{3,5}^{[1]}(\tau)   
   \\& \hspace{1cm}+  \left( \frac{243}{16} \pi i  +  1500\pi i G_2^2(\tau) +405 \pi i G_2(\tau) +150 G_2'(\tau) \right)\cdot \theta_{3,5}(\tau)    
\end{align*}
Lastly, we can add up the coefficients of $z$ given by the remaining two terms, because they share common factors:
\begin{align*}
    &25 \pi i \cdot \theta_{1,5}^{[2]}(\tau) + \left( 300\pi i G_2(\tau)  +\frac{5}{2} \pi i \right)\cdot  \theta_{1,5}^{[1]}(\tau)\\  
    &\hspace{1cm}+ \left( 500\pi i  G_2^2(\tau)  -15\pi i  G_2(\tau) +\frac{1}{16}  \pi i +50G_2'(\tau) \right) \cdot \theta_{1,5}(\tau).
\end{align*}
Here $f_2''$ appears only a single time, being multiplied by the factor $\frac{50i}{\pi}.$ Therefore, this equation can be rearranged in order to express $f_2''$ it terms of the elements of $\mathbb{F}.$ 

For the induction step, note that $f_{2n}''$ appears for the first time when considering the coefficients of $z^{2n-1}$  and it is only multiplied by a non-zero complex number. Now, by the induction hypothesis, all other terms in this equation are already elements of $\mathbb{F},$ so the equation for the coefficients of $z^{2n-1}$ proves that $f_{2n}'' $ is an element of $\mathbb{F}.$  
\end{proof}
We can refine this result, by applying a result from Zwegers:
\begin{proof}[Proof of Theorem \ref{thm:main} (3)]  In \cite[Lemma 1.6]{zwegers} Zwegers also gave a differential equations for  $\theta$-functions, namely
\begin{align*}
    \tilde{\theta}_{\ell,r}(\tau):= \sum_{n \in \mathbb{Z}} (-1)^n q^{\frac{\ell}{2}\left(n-\frac{1}{2} +\frac{r}{\ell} \right)^2},
\end{align*}
for $\ell \geq 3 \text{ odd}, r \in \mathbb{Z}, \tau \in \mathbb{H}.$
These functions satisfy the following differential equation:
\begin{align*}
    \left( D^{\frac{\ell-1}{2}} + \sum_{k=0}^{\frac{\ell-5}{2}} F_{\ell-2k-1}D^k\right) \tilde{\theta}_{\ell,r}= 0 , \tag{4.1}
\end{align*}
where $F_j$ are holomorphic modular forms of weight $j=4,6,8,...,\ell-1,$
\begin{align*}
   D_k:=\frac{1}{2\pi i} \frac{\partial}{\partial \tau} + 2kG_2 
\end{align*}
and
\begin{align*}
    D^k:= D_{2k-\frac{3}{2}} D_{2k-\frac{7}{2}} \dots D_{\frac{5}{2}} D_{\frac{1}{2}}.
\end{align*}
Substituting  $\ell=5$ and $r=3$ resp. $r=4$ into $\tilde{\theta}_{\ell,r}$  yields our functions $\theta_{1,5}$ resp. $\theta_{3,5}$ multiplied with some rational powers of $q$:
\begin{align*}
\tilde{\theta}_{5,3}(\tau)= q^\frac{1}{40} \theta_{1,5}(\tau),\hspace{0.5cm}
\tilde{\theta}_{5,4}(\tau)= q^\frac{9}{40} \theta_{3,5}(\tau).
\end{align*}
In the $\ell=5$ case, the differential equation simplifies to
\begin{align*}
    \left( D^2 + F_0 \right) \tilde{\theta}_{5,r}=0.
\end{align*}
Zwegers showed that $F_0(\tau)=-\frac{11}{15} G_4(\tau),$ so we get
\begin{align*}
    D^2 \tilde{\theta}_{5,3}(\tau)=\frac{11}{15} G_4(\tau) \tilde{\theta}_{5,3}(\tau)=\frac{11}{15} q^\frac{1}{40}G_4(\tau) \theta_{1,5}(\tau) .\tag{4.2}
\end{align*}
The left side of (4.2)  for $r=3$ is given by
\begin{align*}
    D^2 \left(\tilde{\theta}_{5,3}(\tau)\right)&= \left(\frac{1}{2\pi i} \frac{\partial}{\partial \tau} +5 G_2 (\tau) \right) \left( \frac{1}{2\pi i} \frac{\partial}{\partial \tau}+G_2(\tau) \right) \tilde{\theta}_{5,3}(\tau) 
    \\&=  \left(\frac{1}{2\pi i} \frac{\partial}{\partial \tau} +5 G_2 (\tau) \right)  \left[q^\frac{1}{40} \left( \frac{1}{2} \theta_{1,5}^{[1]}(\tau)  + \frac{1}{40} \theta_{1,5}(\tau) + G_2(\tau) \theta_{1,5}(\tau) \right)\right]   
    \\&=   \frac{1}{4}q^\frac{1}{40} \theta_{1,5}^{[2]}(\tau) + \left( \frac{1}{40}  +3 G_2(\tau) \right) q^\frac{1}{40}\theta_{1,5}^{[1]}(\tau)
    \\ &\hspace{0.5cm}+ \left( 5 G_2^2(\tau) + \frac{3}{20}  G_2(\tau)   + \frac{1}{1600} + \frac{1}{2\pi i}  G_2'(\tau)\right) q^\frac{1}{40}\theta_{1,5}(\tau).
\end{align*}
Now multiplying both sides of the resulting equation with $q^{-\frac{1}{40}}$, we get that $\theta_{1,5}^{[2]}(\tau)$ is an element of the algebra  $\mathbb{C}[G_2,G_4,\dots,\theta_{1,5},\theta_{1,5}^{[1]},\theta_{3,5},\theta_{3,5}^{[1]}]$. Analogously, a similar calculation gives the result for $\theta_{3,5}^{[2]}(\tau)$.   This proves the theorem. 
\end{proof}
In light of (3) of Theorem \ref{thm:bring} and (3) of Theorem \ref{thm:main}, one naturally wonders, if a similar result holds for the other ranks as well. We will sketch a proof of the corresponding theorem here. 
\begin{theorem}\label{thm:alge}
\textit{For $k \geq 3$ it holds that the algebra
\begin{align*}
    \mathbb{C}[f_{k,2}^{[j]},f_{k,4}^{[j]},,....,G_2,G_4,...,\theta_{2\ell+1,2k-1}^{[j]}]_{\substack{\ell=0,...,k-2 \\ j=0,...,k-2}}
\end{align*}
is closed under the action of $D.$ }
\end{theorem}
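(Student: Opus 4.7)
The plan is to generalise the argument of Theorem \ref{thm:main} (3) (i.e., Lemma \ref{lem:alge} together with its subsequent refinement via Zwegers' theta PDE) from $k=3$ to arbitrary $k\geq 3$, replacing the rank-crank PDE (2.13) by its higher-level analogue (2.12) and the $\ell=5$ theta PDE by the full version (4.1) at $\ell=2k-1$. The starting point is to use (3.1) and the $A_{2k-1}$-identity derived at the end of Subsection 2.4 to write
\begin{align*}
A_{2k-1}(z,\tau)=-\frac{1}{2\pi iz}\exp\bigg(2\sum_{j\geq 2}f_{k,j}(\tau)\frac{(2\pi iz)^j}{j!}\bigg)+T_k(z,\tau),
\end{align*}
where $T_k$ collects the explicit $\theta_{1,2k-1}$- and $\theta_{2m+3,2k-1}$-contributions (the analogue of the three ``theta terms'' appearing in Subsection 4.1 for $k=3$). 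Then expand every term, including the right-hand side $(2k-2)!((q)_\infty C(\zeta,q)/(-2i\sin\pi z))^{2k-1}$ computed via (2.2), as a Laurent series in $z$ using Lemma \ref{lem:PCI} and (2.4)/Lemma \ref{lem:bern}.

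Next, apply $\mathcal{H}^{k-1}=\mathcal{H}_{2k-3}\mathcal{H}_{2k-5}\cdots\mathcal{H}_1$ term by term. Each factor $\mathcal{H}_j=\frac{2k-1}{\pi i}\partial_\tau+\frac{1}{(2\pi i)^2}\partial_z^2+2(2k-1)(2j-1)G_2$ either (i) adds a $\tau$-derivative while preserving the $z$-degree, (ii) subtracts $2$ from the $z$-degree, or (iii) multiplies by $G_2$. Iterating, the only way to produce the monomial $f_{k,2n}^{[k-1]}$ inside the $z^{2n-1}$-coefficient is to choose branch (i) each time and act on the singleton-partition contribution $f_{k,2n}(\tau)\cdot z^{2n-1}$ in the P\'olya expansion of the exponential; this yields $f_{k,2n}^{[k-1]}$ with non-zero scalar coefficient proportional to $((2k-1)/\pi i)^{k-1}$. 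Every remaining monomial produced involves only products of $G_{2m}$'s, derivatives $f_{k,2m}^{[j]}$ with $j\leq k-2$, terms $f_{k,2m}^{[k-1]}$ with $m<n$, or $\theta_{2\ell+1,2k-1}^{[j]}$ with $j\leq k-1$.

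One now inducts on $n\geq 1$: the coefficient of $z^{2n-1}$ in (2.12) becomes a linear equation for $f_{k,2n}^{[k-1]}$, all of whose remaining summands lie in the algebra by the inductive hypothesis (the middle sum $\sum_{j=0}^{k-3}M_{2k-2-2j}\mathcal{H}^j A_{2k-1}$ reaches only $\tau$-derivatives of order at most $k-3$, and the modular forms $M_r$ are polynomials in $G_4,G_6$). The remaining issue is the appearance of $\theta_{2\ell+1,2k-1}^{[k-1]}$; these are handled by substituting $\ell=2k-1$ into Zwegers' differential equation (4.1) and normalising by the appropriate power of $q$, exactly as for $\ell=5$ in the proof of Theorem \ref{thm:main} (3). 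This expresses every $\theta_{2\ell+1,2k-1}^{[k-1]}$ as a polynomial in the $\theta_{2\ell+1,2k-1}^{[j]}$ ($j=0,\ldots,k-2$), in $G_2$, and in the modular forms $F_r\in\mathbb{C}[G_4,G_6]$. Closure of the algebra under $D$ is then immediate: for $j<k-2$, $D$ sends $f_{k,2n}^{[j]}$ and $\theta_{2\ell+1,2k-1}^{[j]}$ to the next generator (up to a constant), for $j=k-2$ it lands in the algebra by what we have shown, and $DG_{2m}$ is handled by standard Ramanujan identities.

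The main obstacle is the combinatorial book-keeping for the iterated operator $\mathcal{H}^{k-1}$: one must verify that the leading coefficient of $f_{k,2n}^{[k-1]}$ is non-zero and, more delicately, that the Leibniz distribution of derivatives across non-singleton partitions $\lambda\vdash 2n$ only reintroduces $f_{k,2m}^{[k-1]}$ with $m<n$ and never with $m\geq n$. This non-cancellation amounts to a product-rule analysis across each of the $k-1$ operator factors, which grows in length with $k$ but follows the template of the $k=3$ computations in Subsections 4.1 and 4.2 without qualitative surprises.
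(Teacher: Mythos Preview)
Your proposal is correct and follows essentially the same route as the paper's sketch: express $A_{2k-1}$ via (3.1) and the $\theta$-terms, apply the operator $\mathcal{H}^{k-1}$ from (2.12) to the resulting Laurent expansion, compare the coefficient of $z^{2n-1}$ to isolate $f_{k,2n}^{[k-1]}$ with a non-zero scalar in front, induct on $n$, and finally invoke Zwegers' theta PDE (4.1) at level $2k-1$ to eliminate the top $\theta$-derivatives. Your three-branch analysis of each factor $\mathcal{H}_j$ and the explicit identification of the leading constant as $((2k-1)/\pi i)^{k-1}$ make more of the book-keeping visible than the paper's sketch does, but the underlying argument is the same.
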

\begin{proof}[Sketch of proof:] As for the case $k=3$, the proof is done via induction over the even $n \in \mathbb{N}.$ Using the rank-crank type PDE (2.12) one obtains a partial differential equation of order $2k-2$. The derivative with respect to $\tau $ in this PDE is only taken up to the $(k-1)$-th order.  Rewriting $A_{2k-1}(z,\tau)$ as in the proof of  Lemma \ref{lem:alge}, then applying the differential operators, and comparing the coefficients as we did in the proof of Lemma \ref{lem:alge}, we get the 
$(k-1)$-th derivative of $f_{k,j}$ comes up the first time, when looking at the coefficient of $z^{j-1}$. As in the previous proof, this derivative is only multiplied by a non-zero complex number. Rearranging, and using the induction hypothesis, one gets that the algebra
\begin{align*}
    \mathbb{C}\left[f_{k,2}^{[j]},f_{k,4}^{[j]},...,G_2,G_4,\theta_{2\ell+1,2k-1}^{[m]} \right]_{\substack{\ell=0,...,k-2\\j=0,...,k-2\\ m\in \mathbb{N}_0}}
\end{align*}
is closed under differentiation. One quickly verifies that for $\ell=0,...,k-2,$ it holds that
\begin{align*}
    q^{-\frac{4\ell^2+4\ell+1}{8(2k-1)}}\theta_{2\ell+1,2k-1}(\tau)= \tilde{\theta}_{2k-1,\ell+k+1}(\tau).
\end{align*}
Using this and (4.1), one finds that the algebra
\begin{align*}
    \mathbb{C}\left[G_2,G_4,...,\theta_{2\ell+1,2k-1}^{[j]}\right]_{\substack{\ell=0,...,k-2\\j=0,...,k-2}}
\end{align*}
is closed under differentiation with respect to $\tau,$ by arguing the way we did in the end of proof of Theorem \ref{thm:main} (3). \end{proof}

\section{Formulas for $f_{k,j}$ in terms of divisor-like sums}
The methods we will use to prove the claims in this section are essentially those used by Bringmann, Pandey and van Ittersum in \cite{bringmann2025mockeisensteinseriesassociated}. In some cases small tweaks will be made to the argument, while in others the argument provided by them translates directly to the more general case here.

Before proving Theorem \ref{thm:rec} and \ref{thm:integral}, we need some auxiliary statements, which we will prove beforehand.  Starting with  the following lemma:
\begin{lemma}\label{lem:sums} 
\textit{ For $k\geq 3$ and $j \geq 1,$ we have}
\begin{align*}
    R_{k,j}(q)= \frac{2^{2-j}}{(q)_\infty} \sum_{\substack{\ell=2 \\ \ell \equiv j \text{ (mod 2)}}}^j \binom{j}{\ell-1} \left( g_{2,2k-1,\ell}(\tau) +\left(2^{\ell-1}-1 \right) \frac{B_\ell}{2\ell} \right).\tag{5.1}
\end{align*}
\textit{For $j=0,$ we have}
\begin{align*}
    R_{k,0}(q)= \frac{1}{(q)_\infty }  \sum_{n =1}^\infty (-1)^{n+1}\left(q^{\frac{(2k-1)n^2-n}{2}} + q^{\frac{(2k-1)n^2+n}{2}} \right)=\frac{1-\theta_{1,2k-1}(\tau)}{(q)_\infty }.   \tag{5.2}
\end{align*}
\end{lemma}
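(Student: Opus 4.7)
The plan is to start from Garvan's formula $(2.6)$ for $\sum_{n \geq 0} N_k(m,n)q^n$ and perform the inner sum over $m$ in the definition of $R_{k,j}(q)$ separately for each parity of $j$, reducing the even case to the claimed divisor-sum identity via an algebraic expansion of $m^j - (m-1)^j$ around the midpoint $m - \tfrac{1}{2}$.

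For $j = 0$, the geometric series $\sum_{m \in \mathbb{Z}} q^{|m|n} = (1+q^n)/(1-q^n)$ cancels the $(1 - q^n)$ factor in $(2.6)$ and immediately produces the first form of $(5.2)$; rewriting the result as $(1 - \theta_{1,2k-1}(\tau))/(q)_\infty$ then follows by splitting $\theta_{1,2k-1}(\tau) = \sum_{n \in \mathbb{Z}}(-1)^n q^{((2k-1)n^2 + n)/2}$ into its $n = 0$, $n \geq 1$ and $n \leq -1$ contributions and substituting $n \mapsto -n$ in the last. For odd $j \geq 1$, the inner sum vanishes because $N_k(m,n)$ is symmetric in $m$ by $(2.6)$, and the right-hand side of $(5.1)$ is simultaneously zero because $g_{2,2k-1,\ell} \equiv 0$ for odd $\ell$ and $B_\ell = 0$ for odd $\ell \geq 3$.

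For even $j \geq 2$, the manipulation $\sum_{m \in \mathbb{Z}} m^j q^{|m|n} = 2\sum_{m \geq 1} m^j q^{mn}$ combined with the telescoping identity $(1-q^n)\sum_{m \geq 1} m^j q^{mn} = \sum_{m \geq 1}(m^j - (m-1)^j)q^{mn}$ converts $(q)_\infty R_{k,j}(q)$ into a double sum over $(n,m)$. The key algebraic input is the expansion
\begin{align*}
m^j - (m-1)^j = \frac{1}{2^{j-1}} \sum_{\substack{\ell=1 \\ \ell \text{ odd}}}^{j-1} \binom{j}{\ell}(2m-1)^\ell,
\end{align*}
valid for even $j$ and obtained by writing $m = (m - \tfrac{1}{2}) + \tfrac{1}{2}$ and $m - 1 = (m - \tfrac{1}{2}) - \tfrac{1}{2}$ in the binomial theorem, so that the terms with $j - \ell$ even cancel. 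Substituting and interchanging the sums over $\ell$ and $(n,m)$, $(5.1)$ reduces to the single identity
\begin{align*}
\sum_{n,m \geq 1}(-1)^{n-1}(2m-1)^{\ell-1} q^{mn + ((2k-1)n^2 - n)/2} = g_{2,2k-1,\ell}(\tau) + (2^{\ell-1}-1)\frac{B_\ell}{2\ell}
\end{align*}
for every even $\ell \geq 2$, whose right-hand side equals the difference of the two divisor-type $q$-series in $g_{2,2k-1,\ell}$ once the Bernoulli constant is absorbed.

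To establish this final identity I would reparametrize by $\alpha := n$ and $\beta := (2k-1)n + 2m - 1$, so the $q$-exponent becomes $\alpha\beta/2$, the weight becomes $(\beta - (2k-1)\alpha)^{\ell-1}$, and $\alpha + \beta$ is automatically odd. Splitting by the parity of $\alpha$: when $\alpha$ is odd one recovers precisely the odd-outer-index part of the first sum in $g_{2,2k-1,\ell}$, and when $\alpha$ is even the sign $(-1)^{\alpha-1} = -1$ yields minus the odd-outer-index part of the second sum. It then remains to verify that the even-outer-index contributions cancel between the two sums: after pulling out the common factor $2^{\ell-1}$ and performing the substitutions $\nu = s + (2k-1)\mu'$ in the first and $\mu' = s + (2k-1)\nu$ in the second, both reduce to $\sum_{n,s \geq 1} s^{\ell-1}q^{2ns + 2(2k-1)n^2}$. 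This parity-based bookkeeping is the main obstacle; once it is carried out the cancellation is automatic, and the entire argument becomes an index exercise organized around the reparametrization $(n,m) \leftrightarrow (\alpha,\beta)$.
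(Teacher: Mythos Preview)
Your argument is correct and reaches the same identity, but the logistics differ from the paper's. The paper starts exactly as you do, but then immediately splits the outer sum over $n$ by parity into four $q$-series; a chain of index shifts ($m\mapsto m-dn$, then swapping the roles of $m$ and $n$ in two of the four terms, followed by further additive shifts) brings these into the shape $\big((\cdot)+1\big)^j-\big((\cdot)-1\big)^j$ with summation ranges already matching those in the definition of $g_{2,2k-1,\ell}$, and only at that point is the binomial theorem invoked (by reference to the analogous computation in~\cite{bringmann2025mockeisensteinseriesassociated}). You reverse the order: you telescope first and apply the midpoint identity $m^j-(m-1)^j=2^{1-j}\sum_{\ell\text{ odd}}\binom{j}{\ell}(2m-1)^\ell$ to reduce everything to a single claim for each even $\ell$, and only then reparametrize via $(\alpha,\beta)=(n,(2k-1)n+2m-1)$. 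That reparametrization makes the parity bookkeeping transparent and forces you to verify explicitly that the even-$m$ pieces of the two sums in $g_{2,2k-1,\ell}$ coincide (your substitutions $\nu=s+(2k-1)\mu'$ and $\mu'=s+(2k-1)\nu$ do exactly this), whereas the paper absorbs this cancellation into its substitution chain. Both routes are equally valid; yours is more self-contained and the $(\alpha,\beta)$-picture gives a cleaner structural reason for the cancellation, while the paper's version stays closer to the template of~\cite{bringmann2025mockeisensteinseriesassociated}.
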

\begin{proof} The proof of the first claim follows the scaffold the proof of Lemma 4.1 of \cite{bringmann2025mockeisensteinseriesassociated}: If $j$ is odd, both sides of the equation are $0$, because
$N_{k}(m,n)=N_{k}(-m,n),$ hence $R_{k,j}(q)=0, $ and for all odd $j$ we have
\begin{align*}
    g_{2,2k-1,\ell}(\tau) +\left(2^{\ell-1}-1 \right) \frac{B_\ell}{2\ell}  =0,
\end{align*}
so the statement follows. Now, let $j \neq 0$ be even. We set $d:= 2k-1$ and get
\begin{align*}
    R_{k,j}(q) &= \sum_{n=0}^\infty  \sum_{m \in \mathbb{Z}} m^j N_k(m,n)q^n  
    \\ &= \sum_{m\in\mathbb{Z}} m^j \frac{1}{(q)_\infty}\sum_{n \geq 1} (-1)^{n+1} q^{\frac{n(dn-1)}{2}+|m|n} (1-q^n)
    \\ &= \frac{2}{(q)_\infty} \sum_{n = 1}^\infty (-1)^{n+1}q^{\frac{n(dn-1)}{2}} (1-q^n) \sum_{m = 0}^\infty m^j q^{mn}. \tag{5.3}
\end{align*}
Distinguishing between odd and even $n$, we get
\begin{align*}
    2^{j-1}(q)_\infty R_{k,j}(q)&= \sum_{n \geq 1, m \geq 0} (2m)^j \bigg(-q^{n(2dn+2m-1)} + q^{(2n-1)(dn+m-k)} \\
    &\hspace{4cm}+ q^{n(2dn+2m+1)} -q^{(2n-1)(dn+m-k+1)} \bigg) 
\end{align*}
Making the change of variables $m\mapsto m-dn$ gives
\begin{align*}
    &\sum_{m \geq dn \geq d} (2m-2dn)^j \left( -q^{n(2m-1)} + q^{(2n-1)(m-k)} + q^{n(2m+1)} -q^{(2n-1)(m-k+1)}  \right)
    \\ &= \sum_{m \geq dn \geq d} (2m-2dn)^j \left( -q^{n(2m-1)} + q^{n(2m+1)} \right)
    \\ &\hphantom{=}+ \sum_{n \geq dm \geq d} (2n-2dm)^j \left( q^{(n-k)(2m-1)} -q^{(n-k+1)(2m-1)}\right)
\end{align*}
when we interchange the roles of $m$ and $n$ in the second and fourth term.
Next we apply the following shifts: $m \mapsto m-1$ in the second term, $n\mapsto n+k$ in the third and $n \mapsto n+k-1$ in the fourth. This results in
\begin{align*}
    & -\sum_{m \geq dn \geq d} (2m-2dn)^j q^{n(2m-1)} 
    + \sum_{m-1\geq dn \geq d} (2m-2-2dn)^j q^{n(2m-1)}
    \\ &+ \sum_{n+k\geq dm \geq d } (2n+2k-2dm)^j q^{n(2m-1)}
    - \sum_{n+k-1\geq dm \geq d} (2n+2k-2-2dm)^j q^{n(2m-1)}
      \\ &= -\sum_{2m-1 \geq 2dn-1 \geq 2d-1} \hspace{-0.7cm}(2m-1-2dn+1)^j q^{n(2m-1)} 
    + \hspace{-0.7cm}\sum_{2m-1\geq 2dn+1 \geq 2d+1} \hspace{-0.7cm} (2m-1-2dn-1)^j q^{n(2m-1)}
    \\ &\hspace{0.6cm}+ \sum_{2n+1\geq d(2m-1) \geq d } \hspace{-0.7cm}\left(2n+1-d(2m-1)\right)^j q^{n(2m-1)}
    - \hspace{-0.7cm}\sum_{2n-1\geq d(2m-1) \geq d} \hspace{-0.7cm}\left(2n-1-d(2m-1) \right)^j q^{n(2m-1)},
\end{align*}
and
\begin{align*}
        &\sum_{\substack{m \geq 2dn+1 \geq 2d+1\\ m \text{ odd}}} \left( (m-2dn-1)^j - (m-2dn+1)^j \right) q^{nm} 
    \\&\hspace{1cm}+ \sum_{\substack{2n-1\geq dm\geq d\\ m \text{ odd}} } \left( (2n+1-dm)^j -(2n-1-dm)^j\right)  q^{nm}
    \\ &\hspace{0.5cm}=  \sum_{m \geq 2dn+1 \geq 2d+1} \left( (m-2dn-1)^j - (m-2dn+1)^j \right) q^{nm} 
   \\&\hspace{1.5cm} + \sum_{2n-1\geq dm\geq d } \left( (2n+1-dm)^j -(2n-1-dm)^j\right)  q^{nm}
\end{align*}
Now using the binomial theorem as in the proof of Lemma 4.1 of \cite{bringmann2025mockeisensteinseriesassociated}, we get the first claim.

For $j=0,$ one calculates 
\begin{align*}
    R_{k,0}(q) &=\sum_{n=0}^\infty \sum_{m\in \mathbb{Z}} N_k(m,n)q^n
    \\ &= \sum_{m \in \mathbb{Z}}  \frac{1}{(q)_\infty} \sum_{n =1}^\infty (-1)^{n+1}q^{\frac{n(dn-1)}{2}+|m|n} (1-q^n)
    \\ &=  \frac{1}{(q)_\infty} \sum_{n =1}^\infty (-1)^{n+1}q^{\frac{n(dn-1)}{2}} (1-q^n) \left( \sum_{m=0}^\infty \left(q^n\right)^m  + \sum_{m=1}^\infty \left(q^n\right)^m\right)
    \\&= \frac{1}{(q)_\infty }  \sum_{n =1}^\infty (-1)^{n+1}q^{\frac{n(dn-1)}{2}} (1-q^n)  \frac{1+q^n}{1-q^n}
    \\&= \frac{1}{(q)_\infty }  \sum_{n =1}^\infty (-1)^{n+1}\left(q^{\frac{n(dn-1)}{2}} + q^{\frac{n(dn+1)}{2}} \right).
\end{align*}
We can rewrite this sum as a sum ranging over $\mathbb{Z}$, if we correct for the missing term for $n=0,$ which is $-1$. With this we get the expression in terms of $\theta_{1,2k-1}(\tau).$ 
\end{proof}
With this we can now prove the following lemma:
\begin{lemma} \textit{For $k \geq 3,$ we have
\begin{align*}
    \frac{\pi z (q)_\infty}{\sin(\pi z)} FG_k(\zeta,q)&= \sum_{n = 1}^\infty \frac{n}{2^{n-2}}\left( g_{2,2k-1,n}(\tau) + \frac{\left(2^{n-1}-1\right)B_n}{2n}\right) \frac{(2\pi i z)^n}{n!}
    \\&\hspace{1cm}+(q)_\infty R_{k,0}(q)  \sum_{n=0}^\infty B_n\left(\frac{1}{2}\right)\frac{(2\pi i z)^n}{n!}.  \tag{5.4}
\end{align*}}
\end{lemma}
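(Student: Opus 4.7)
The plan is to prove this identity by direct substitution, combining Lemma~\ref{lem:sums} with the Taylor expansion $(q)_\infty FG_k(\zeta,q) = \sum_{j \geq 0} (q)_\infty R_{k,j}(q)(2\pi iz)^j/j!$ coming from (3.8) and the generating function (2.4) for the Bernoulli polynomials at $x = 1/2$.

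First, I would multiply (3.8) by $\pi z/\sin(\pi z)$ and split off the $j = 0$ summand. Using (2.4) with $w = 2\pi iz$ and $x = 1/2$, together with the factorization $e^{2\pi iz} - 1 = e^{\pi iz} \cdot 2i\sin(\pi z)$, one gets
$$\frac{\pi z}{\sin(\pi z)} = \sum_{n \geq 0} B_n\left(\tfrac{1}{2}\right) \frac{(2\pi iz)^n}{n!}.$$
Hence the $j = 0$ contribution is exactly $(q)_\infty R_{k,0}(q) \sum_{n \geq 0} B_n(1/2) (2\pi iz)^n/n!$, matching the second term on the right-hand side of (5.4).

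It remains to identify the $j \geq 1$ part with the first sum in (5.4). Writing $H_\ell := g_{2,2k-1,\ell}(\tau) + (2^{\ell-1}-1) B_\ell/(2\ell)$ and inserting (5.1), the expression becomes
$$\frac{\pi z}{\sin(\pi z)} \sum_{j\geq 1} 2^{2-j} \sum_{\substack{2 \leq \ell \leq j\\ \ell \equiv j\,(2)}} \binom{j}{\ell-1} H_\ell \frac{(2\pi iz)^j}{j!}.$$
Swapping the order of summation to push $\ell$ outside (and noting that $H_\ell = 0$ for odd $\ell$, so only even $\ell \geq 2$ contribute), I would substitute $m = j - \ell + 1$. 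The parity constraint $j \equiv \ell \pmod 2$ forces $m$ to be odd, and the inner sum collapses to
$$\frac{4(\pi iz)^{\ell-1}}{(\ell-1)!} \sum_{\substack{m \geq 1\\ m \text{ odd}}} \frac{(\pi iz)^m}{m!} = \frac{4(\pi iz)^{\ell-1}}{(\ell-1)!} \cdot i\sin(\pi z).$$
The factor $i\sin(\pi z)$ cancels the $\sin(\pi z)$ in the prefactor; after absorbing a $\pi iz$ and rewriting $(\pi iz)^\ell = (2\pi iz)^\ell / 2^\ell$ one arrives at $\sum_\ell \frac{\ell H_\ell}{2^{\ell-2}} \frac{(2\pi iz)^\ell}{\ell!}$, as desired.

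The only delicate point is the bookkeeping after the index change: it is precisely the parity condition in Lemma~\ref{lem:sums} that reduces the inner sum to a $\sinh$ rather than a full exponential, which is what permits the cancellation with the $\sin(\pi z)$ in the denominator. Everything else is routine.
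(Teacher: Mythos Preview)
Your argument is correct, and in fact cleaner than the paper's. Both proofs begin identically: expand $FG_k$ via (3.8), split off $j=0$, and plug in Lemma~\ref{lem:sums} for $j\ge 1$. The difference is in how the remaining double sum is handled. The paper multiplies the $j\ge 1$ series against the full Bernoulli expansion $\sum_n B_n(\tfrac12)(2\pi iz)^n/n!$ coming from $\pi z/\sin(\pi z)$, obtains a triple sum over $(n,j,\ell)$, and then collapses the middle sum using the addition formula (2.5) together with $B_m(\tfrac12)=0$ for odd $m$ to extract the single surviving term $n=\ell+1$. You instead swap $(j,\ell)$ first and, after the substitution $m=j-\ell+1$, recognise the inner sum as $\sinh(\pi iz)=i\sin(\pi z)$, which cancels the $\sin(\pi z)$ in the prefactor directly. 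Your route sidesteps the Bernoulli-polynomial identity entirely; the paper's route has the minor advantage of keeping everything as a formal power-series manipulation (no appeal to the closed form of $\sinh$), but at the cost of a less transparent combinatorial step.
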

\begin{proof} It holds that
\begin{align*}
    FG_k(\zeta,q)= \sum_{j =0}^\infty R_{k,j}(q) \frac{(2\pi i z)^j}{j!}.
 \end{align*}
 Next, we have by Lemma \ref{lem:sums}
 \begin{align*}
     &\frac{\pi z (q)_\infty}{\sin(\pi z)} \left(FG_k(\zeta,q)- R_{k,0}(q) \right) \\ &\hspace{0.5cm}= \sum_{n = 0}^\infty B_n\left( \frac{1}{2}\right) \frac{(2 \pi i z)^n}{n!} \sum_{j =1}^\infty 2^{2-j} \hspace{-0.5cm} \sum_{\substack{\ell=1\\ \ell \not\equiv j \text{( mod 2}) }}^j\hspace{-0.5cm} \binom{j}{\ell}  \left( g_{2,2k-1,\ell+1}(\tau) +\frac{\left(2^{\ell}-1\right)B_{\ell+1}}{2(\ell+1)} \right) \frac{(2\pi i z)^j}{j!}
     \\ &\hspace{0.5cm}= \sum_{\ell = 1}^\infty \sum_{n = 0}^\infty \sum_{\substack{j = \ell \\ j \not\equiv \ell \text{ (mod 2)}}}^n \hspace{-0.5cm}\binom{n}{j} \binom{j}{\ell} B_{n-j}\left(\frac{1}{2} \right) 2^{2-j} \left(  g_{2,2k-1,\ell+1}(\tau) + \frac{\left(2^\ell-1 \right)B_{\ell+1}}{2(\ell+1)} \right) \frac{(2\pi i z)^n}{n!},
 \end{align*}
with the change of variables $n \mapsto n-j.$
Using  (2.5),  $B_m\left(\frac{1}{2}\right)=0$ if $m$ is odd and $m\neq 1$ and $B_1\left(\frac{1}{2}\right)=-\frac{1}{2},$ we get that for odd $l \in \mathbb{N}$ and $n \in \mathbb{N}$ we find
\begin{align*}
    \sum_{\substack{j=\ell\\ j \not\equiv \ell \text{ (mod 2)}}}^n \binom{n}{j} \binom{j}{\ell} B_{n-j}\left( \frac{1}{2} \right) 2^{\ell-j} = - \binom{n}{\ell} B_{n-\ell}(0)= \begin{cases}
    \frac{n}{2} & \text{\textit{if }} n=\ell+1\\
    0 & \text{otherwise.}
    \end{cases}
\end{align*}
Combining this with our previous calculations, we get
\begin{align*}
    \hspace{-0.3cm}\frac{\pi z (q)_\infty}{\sin(\pi z)} \left(FG_k(\zeta,q)-R_{k,0}(q) \right) = \sum_{n = 1}^\infty \frac{n}{2^{n-2}}\left(  g_{2,2k-1,n}(\tau) + \frac{\left( 2^{n-1}-1\right)B_n}{2n} \right) \frac{(2\pi i z)^n}{n!}.\hspace{-0.3cm} \tag{5.5}
\end{align*}
Lastly, because
\begin{align*}
    \frac{\pi z}{\sin(\pi z)} =\sum_{n = 0}^\infty B_n \left( \frac{1}{2} \right) \frac{(2\pi i z)^n}{n!}
\end{align*}
 (5.5) is equivalent to
\begin{align*}
    \frac{\pi z (q)_\infty}{\sin(\pi z)} FG_k(\zeta,q)&=\sum_{n = 1}^\infty \frac{n}{2^{n-2}}\left( g_{2,2k-1,n}(\tau) + \frac{\left(2^{n-1}-1\right)B_n}{2n}\right) \frac{(2\pi i z)^n}{n!}
    \\&\hspace{1cm}+(q)_\infty R_{k,0}(q)\sum_{n=0}^\infty B_n\left(\frac{1}{2}\right)  \frac{(2\pi i z)^n}{n!}. \qedhere
\end{align*}
\end{proof}
With these preliminaries, we can now provide a proof for Theorem \ref{thm:rec}:
\begin{proof}[Proof of Theorem \ref{thm:rec}:] Recall that we defined
\begin{align}
    \frac{\pi z (q)_\infty}{\sin(\pi z)} FG_k(\zeta,q) + \frac{\pi z \theta_{1,2k-1}(\tau)}{\sin(\pi z)} = \exp\left(2 \sum_{j = 1}^\infty f_{k,j}(\tau) \frac{(2\pi iz)^j}{j!} \right) \tag{5.6}
\end{align}
By our previous lemma and the calculations in its proof, we have
\begin{align*}
     \frac{\pi z (q)_\infty}{\sin(\pi z)} FG_k(\zeta,q)&=\sum_{n = 1}^\infty \frac{n}{2^{n-2}}\left( g_{2,2k-1,n}(\tau) + \frac{\left(2^{n-1}-1\right)B_n}{2n}\right) \frac{(2\pi i z)^n}{n!}
    \\&\hspace{1cm}+(q)_\infty R_{k,0}(q)\sum_{n=0}^\infty B_n\left(\frac{1}{2}\right)  \frac{(2\pi i z)^n}{n!}.
\end{align*}
and
\begin{align*}
    \frac{\pi z \theta_{1,2k-1}(\tau)}{\sin(\pi z)} = \theta_{1,2k-1}(\tau) \sum_{j = 0 }^\infty B_j\left(  \frac{1}{2}\right) \frac{(2\pi i z)^j}{j!}.
\end{align*}
So the left-hand side of $(5.6) $ is equal to 
\begin{align*}
    &\sum_{j=1}^\infty  \left( \frac{jg_{2,2k-1,j}(\tau)}{2^{j-2}} + \left(1-2^{1-j}\right) B_j \right)\frac{(2\pi i z)^j}{j!} 
    \\    &\hspace{1.5cm}+\left( (q)_\infty R_{k,0}(q) + \theta_{1,2k-1}(\tau)\right) \sum_{j=0}^\infty  B_j\left(  \frac{1}{2}\right) \frac{(2\pi i z)^j}{j!} 
    \\ &\hspace{0.5cm}= \sum_{j=1}^\infty \left( \frac{jg_{2,2k-1,j}(\tau)}{2^{j-2}} +\left(1-2^{1-j}\right) B_j \right) \frac{(2\pi i z)^j}{j!} +  \sum_{j=0}^\infty  B_j\left(  \frac{1}{2}\right) \frac{(2\pi i z)^j}{j!}
    \\ &\hspace{0.5cm}= 1+ \sum_{j=1}^\infty   \frac{jg_{2,2k-1,k}(\tau)}{2^{j-2}} \frac{(2\pi i z)^j}{j!} \tag{5.7},
\end{align*}
by (5.2) and the fact that $B_j\big(\frac{1}{2}\big)=-(1-2^{1-j})B_j$. Now taking the derivative with respect to $z$ of $(5.6),$ and using (5.7), we get
\begin{align*}
     \hspace{-0.3cm}2\pi i \sum_{j = 1}^\infty  \frac{jg_{2,2k-1,j}(\tau)}{2^{j-2}} \frac{(2\pi i z)^{j-1}}{(j-1)!}  
   \hspace{-0.1cm}= 4\pi i \sum_{j = 1}^\infty f_{k,j}(\tau) \frac{(2\pi i z)^{j-1}}{(j-1)!}\exp\hspace{-0.1cm}\left(2 \sum_{j = 1}^\infty f_{k,j}(\tau) \frac{(2\pi i z)^j }{j!} \right)\hspace{-0.3cm} \tag{5.8}
\end{align*}
Substituting (5.6) into (5.8)  gives:
\begin{align*}
    &2\pi i \sum_{j = 1}^\infty \frac{jg_{2,2k-1,j}(\tau)}{2^{j-2}} \frac{(2\pi i z)^{j-1}}{(j-1)!} 
=  4\pi i \hspace{-0.1cm} \sum_{j = 1}^\infty f_{k,j}(\tau) \frac{(2\pi i z)^{j-1}}{(j-1)!} \left(1 + \sum_{j = 1}^\infty \frac{jg_{2,2k-1,j}(\tau)}{2^{j-2}} \frac{(2\pi i z)^j}{j!} \right)    
    \\ &\hspace{1cm}= 4\pi i \sum_{j = 1}^\infty f_{k,j}(\tau) \frac{(2\pi i z)^{j-1}}{(j-1)!}
+ 4\pi i \sum_{n= 2}^\infty \sum_{j=2}^{n-1} \frac{f_{k,j}(\tau)}{(j-1)!}  \frac{(n-j)g_{2,2k-1,n-j}(\tau)}{2^{n-j-2} (n-j)!}   (2\pi i z)^{n-1}.    
\end{align*}
Comparing the coefficients of $(2\pi i z)^{j-1} $ gives the first equation.  For the second equation, multiply the exponential term in (5.8) to the other side and use Lemma \ref{lem:PCI} with $x_j=-\frac{2f_{k,j}}{j!}$ and $\omega= 2\pi i z .$ This yields
\begin{align*}
    2 \sum_{j = 1}^\infty f_{k,j}(\tau) \frac{(2\pi i z)^{j-1}}{(j-1)!} = \left[ \sum_{n= 1}^\infty \frac{n g_{2,2k-1,n}(\tau)}{2^{n-2}} \frac{(2\pi i z)^{n-1}}{(n-1)!}  \right] \sum_{m= 0 }^\infty \text{Tr}_m(\psi,f_k;\tau) (2\pi i z)^m.
\end{align*}
Now comparing the coefficients  of $(2\pi i z)^{j-1}$ gives the claim. 
\end{proof}

Lastly, we will provide the proof for Theorem \ref{thm:integral}:
\begin{proof}[Proof of Theorem \ref{thm:integral}:] By Lemma \ref{lem:bern}, we have
\begin{align*}
    \frac{\sin(\pi z)}{\pi z} = \exp\left( \sum_{j= 2}^\infty  \frac{B_j}{j} \frac{(2\pi i z)^j}{j!} \right).
\end{align*}
Using this and  (3.1) gives 
\begin{align*}
    \exp\left(2 \sum_{j= 1}^\infty \left(f_{k,j}(\tau) + \frac{B_j}{2j} \right) \frac{(2\pi i z)^j}{j!} \right)=  (q)_\infty FG_k(\zeta,q) + \theta_{1,2k-1}(\tau). 
 \end{align*}
If we now use (3.8) and replace $2\pi i z$ by $z$, we get 
\begin{align*}
    \exp\left(2 \sum_{j= 1}^\infty \left(f_{k,j}(\tau) + \frac{B_j}{2j} \right) \frac{ z^j}{j!} \right)&=  (q)_\infty \sum_{j=0}^\infty R_{k,j}(q) \frac{z^j}{j!} + \theta_{1,2k-1}(\tau) \\ &= 1 + (q)_\infty\sum_{j=1}^\infty R_{k,j}(q) \frac{z^j}{j!},
\end{align*} 
 if we also use (5.2). From here on, we can now argue the way Bringmann, Pandey and van Ittersum did in their proof of Theorem 1.5 in \cite{bringmann2025mockeisensteinseriesassociated} and obtain the desired result. 
 \end{proof} 

We can even give some more information on the form of some of the Fourier coefficient of the $f_{k,j}$: 
\begin{lemma} \textit{For $k\geq3$ and $j \geq 2$ we have that}
\begin{align*}
    f_{k,j}(\tau)&= - \frac{B_j}{2j} + q^k  + \big( 2^{j}-1\big) q^{k+1} +  \big(3^j-2^j \big)q^{k+2} +...+ \big(k^j-(k-1)^j\big)q^{2k-1}\\&\hspace{0.75cm}+O\big(q^{2k}\big).
\end{align*}
\end{lemma}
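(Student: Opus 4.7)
The plan is to invert, modulo $O(q^{2k})$, the compact identity obtained by combining (5.6) and (5.7) of the text,
\begin{align*}
    \exp\!\Bigl(2\sum_{j\geq 1}f_{k,j}(\tau)\frac{u^j}{j!}\Bigr) = 1 + G(z,\tau), \qquad G(z,\tau):=\sum_{j\geq 1}\frac{j\,g_{2,2k-1,j}(\tau)}{2^{j-2}}\frac{u^j}{j!},
\end{align*}
where $u:=2\pi iz$. Writing $g_{2,2k-1,j}(\tau)=c_j+h_j(\tau)$ with constant term $c_j=(1-2^{j-1})\tfrac{B_j}{2j}$, I would first extract the low-order $q$-expansion of $h_j$ directly from the defining divisor-like sum: for $k\geq 3$ the second sum is supported at $q^{mn}$ with $mn\geq 4k-1>2k-1$, while in the first sum only the pairs $(m,n)=(1,n)$ with $k\leq n\leq 2k-1$ contribute at orders up to $q^{2k-1}$. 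This yields
\begin{align*}
    h_j(\tau)=q^k+3^{j-1}q^{k+1}+5^{j-1}q^{k+2}+\cdots+(2k-1)^{j-1}q^{2k-1}+O(q^{2k})
\end{align*}
for even $j\geq 2$, while $h_j\equiv 0$ for odd $j$.

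Setting $C(z):=\sum_j\tfrac{jc_j}{2^{j-2}}\tfrac{u^j}{j!}$ and $H(z,\tau):=G-C=O(q^k)$, taking logarithms gives
\begin{align*}
    2\sum_{j\geq 1}f_{k,j}(\tau)\frac{u^j}{j!} \equiv \log(1+C(z)) + \frac{H(z,\tau)}{1+C(z)} \pmod{O(q^{2k})},
\end{align*}
since every further term in $\log(1+C+H)$ contains $H^r$ with $r\geq 2$, hence is $O(q^{2k})$. Evaluating the identity at $\tau\to i\infty$ (using $FG_k\to 0$ and $\theta_{1,2k-1}\to 1$) identifies $1+C(z)=\tfrac{\pi z}{\sin(\pi z)}=\tfrac{u/2}{\sinh(u/2)}$, hence $\tfrac{1}{1+C(z)}=\tfrac{2\sinh(u/2)}{u}$; the $\log(1+C)$ piece on the right then exactly reproduces the constant contributions $-B_j/(2j)$ guaranteed by Theorem \ref{thm:main} (1), and only the $q$-dependent piece $H/(1+C)$ requires further analysis.

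For each $m\in\{0,1,\ldots,k-1\}$ the coefficient of $q^{k+m}$ in $H$ receives contributions only from even $j\geq 2$, and a direct computation identifies it with the even-in-$u$ part of $2u\,e^{(2m+1)u/2}$, namely
\begin{align*}
    [q^{k+m}]\,H(z,\tau)=2u\sinh\!\Bigl(\tfrac{(2m+1)u}{2}\Bigr).
\end{align*}
Multiplying by $\tfrac{2\sinh(u/2)}{u}$ and invoking the product-to-sum identity $2\sinh(A)\sinh(B)=\cosh(A+B)-\cosh(A-B)$ collapses the answer to
\begin{align*}
    [q^{k+m}]\,\frac{H(z,\tau)}{1+C(z)} = 4\sinh\!\Bigl(\tfrac{(2m+1)u}{2}\Bigr)\sinh\!\Bigl(\tfrac{u}{2}\Bigr) = 2\sum_{j\geq 0}\bigl((m+1)^j-m^j\bigr)\frac{u^j}{j!}.
\end{align*}
Matching coefficients of $u^j/j!$ with the left-hand side $2[q^{k+m}]f_{k,j}(\tau)$ yields $[q^{k+m}]f_{k,j}(\tau)=(m+1)^j-m^j$ for even $j\geq 2$, which is the claimed expansion.

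The principal obstacle is the twofold bookkeeping: the parity restriction (only even $j$ enter $H$) must line up with the even-in-$u$ selection that turns $2u\,e^{au/2}$ into a clean $\sinh$, and the truncation bound $H=O(q^k)$ must be sharp enough that $H^r$ with $r\geq 2$ cannot spill into the range $q^k,\ldots,q^{2k-1}$. Once both pieces are in place the single product-to-sum identity does essentially all the work.
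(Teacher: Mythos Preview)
Your argument is correct. Both you and the paper exploit the same underlying principle: write $\exp\bigl(2\sum_j F_j(\tau)u^j/j!\bigr)=1+X(z,\tau)$ with $X=O(q^k)$, observe that $\log(1+X)\equiv X\pmod{O(q^{2k})}$, and read off the coefficients. The difference lies in which identity is used as the starting point. The paper works with the shifted functions $F_j=f_{k,j}+\tfrac{B_j}{2j}$ and the relation (from the proof of Theorem~\ref{thm:integral})
\[
\exp\!\Bigl(2\sum_{j\ge 1}\bigl(f_{k,j}+\tfrac{B_j}{2j}\bigr)\tfrac{z^j}{j!}\Bigr)=1+(q)_\infty\sum_{j\ge 1}R_{k,j}(q)\tfrac{z^j}{j!},
\]
whose right-hand side minus $1$ is already $O(q^k)$ with no constant part to peel off; the coefficients $(m+1)^j-m^j$ then fall out immediately from the $n=1$ term of (5.3). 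Your route through (5.6)--(5.7) and the divisor-like sums $g_{2,2k-1,j}$ carries a nonzero constant term, so you must first identify $1+C(z)=\tfrac{u/2}{\sinh(u/2)}$ and then pass through the hyperbolic product-to-sum identity to convert the $(2m+1)^{j-1}$ arising in $h_j$ into the desired $(m+1)^j-m^j$. This is a genuine detour, though a pleasant one: it makes the role of $\tfrac{\sinh(u/2)}{u/2}$ as the inverse of the Bernoulli generating function completely explicit. One minor slip: your displayed formula $2\sum_{j\ge 0}\bigl((m+1)^j-m^j\bigr)\tfrac{u^j}{j!}$ should be restricted to even $j$, since $4\sinh(A)\sinh(B)=2\cosh(A+B)-2\cosh(A-B)$ is an even function of $u$; this does not affect the conclusion because you immediately specialise to even $j$.
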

\begin{proof} For this we return to the $R_{k,j}(q).$ We have $R_{k,j}(q)=0$ if $j $ is odd. For even $j,$ by (5.3), we have
\begin{align*}
    (q)_\infty R_{k,j}(q)&= 2q^k + 2\big(2^j-1\big)q^{k+1} +2\big(3^j-2^j\big)q^{k+2}+...+2\big(k^j-(k-1)^j\big)q^{2k-1}
    \\ &\hspace{0.75cm}+ O\big(q^{2k}\big).
\end{align*}
So multiplying at least 2 of  these $(q)_\infty R_{k,j_i}(q)$ with even $j_i$ gives a term in the order of $O(q^{2k}).$ So by (5.9), we get
\begin{align*}
         2 \sum_{j=1}^\infty \left(f_{k,j}(\tau) + \frac{B_j}{2j} \right) z^j &=  2\sum_{\substack{j = 2\\j \equiv 0 \text{ (mod 2})}}^\infty \bigg[q^k+ \big(2^j-1\big)q^{k+1}  +\big(3^j-2^j\big)q^{k+2} + ...\\ &\hspace{2.5cm} +\big(k^j-(k-1)^j\big)q^{2k-1}  +O(q^{2k})\bigg] z^j. \qedhere
\end{align*}
\end{proof} 
We close by giving the first few Fourier coefficients of some of the $f_{k,j}$ for $k=3,4,5$ and $j$ ranging from $2$ to $8:$
\flushleft{\textbf{Examples:}} \textit{Using Theorem \ref{thm:rec} we get, for $k=3,$
\begin{align*}
    f_{3,2}(\tau)&= -\frac{1}{24} +q^3+3q^4+5q^5+ 7q^6 +9q^7+11q^8+O\left(q^9\right),\\
    f_{3,4}(\tau)&=\frac{1}{240} +q^3 + 15q^4 + 65q^5 +169 q^6 + 333q^7 + 557q^8 + O\big(q^9\big),\\
    f_{3,6}(\tau)&=- \frac{1}{504} +q^3 +63q^4+665q^5+3337q^6+10989q^7 +27581q^8 +O\big(q^9\big),\\
    f_{3,8}(\tau)&=\frac{1}{480} +q^3 + 255 q^4 + 6305q^5 + 58849q^6 + 319293q^7 +1216037 q^8 +O\big(q^9\big).
\end{align*}
With $k=4$ we have
\begin{align*}
    f_{4,2}(\tau)&= -\frac{1}{24} +q^4+3q^5+5q^6+ 7q^7 +9q^8+11q^9+O\big(q^{10}\big),\\
    f_{4,4}(\tau)&=\frac{1}{240} +q^4 + 15q^5 + 65q^6 +175 q^7 + 363q^8 + 635q^9 + O\big(q^{10}\big),\\
    f_{4,6}(\tau)&=- \frac{1}{504} +q^4 +63q^5+665q^6+3367q^7+11499q^7 +30491q^9 +O\big(q^{10}\big),\\
    f_{4,8}(\tau)&=\frac{1}{480} +q^4 + 255 q^5 + 6305q^6 + 58975q^7 + 324963q^8 +1283195 q^9 +O\big(q^{10}\big),
\end{align*}
and lastly, with $k=5$
\begin{align*}
    f_{5,2}(\tau)&= -\frac{1}{24} +q^5+3q^6+5q^7+ 7q^8 +9q^9+11q^{10}+O\big(q^{11}\big),\\
    f_{5,4}(\tau)&=\frac{1}{240} +q^5 + 15q^6 + 65q^7 +175 q^8 + 369q^9 + 665q^{10} + O\big(q^{11}\big),\\
    f_{5,6}(\tau)&=- \frac{1}{504} +q^5 +63q^6+665q^7+3367q^8+11529q^9 +31001q^{10} +O\big(q^{11}\big), \\
    f_{5,8}(\tau)&=\frac{1}{480} +q^5 + 255 q^6 + 6305q^7 + 58975q^8 + 325089q^9 +1288865q^{10} +O\big(q^{11} \big).
\end{align*}}

\end{document}